\newtheorem{proposition}{\textbf{Proposition}}
\newtheorem{lemma}[proposition]{\textbf{Lemma}}
\newtheorem{corollary}[proposition]{\textbf{Corollary}}
\newtheorem{theorem}[proposition]{\textbf{Theorem}}
\theoremstyle{definition}
\newtheorem{definition}[proposition]{\textbf{Definition}}
\newtheorem{example}[proposition]{\textbf{Example}}
\newtheorem{remark}[proposition]{\textbf{Remark}}
\newtheorem{acknowledgements}{\textbf{Acknowledgements}}
\newcommand{\hkq}{{\sslash\mkern-6mu/}}
\newcommand{\symp}{\sslash}
\newcommand{\Lie}[1]{\operatorname{\textsl{#1}}}
\newcommand{\lie}[1]{\operatorname{\mathfrak{#1}}}
\newcommand{\GL}{\Lie{GL}}
\newcommand{\sln}{\lie{sl}}
\newcommand{\su}{\lie{su}}
\newcommand{\cf}{\lie c}
\newcommand{\gf}{\lie g}
\newcommand{\kf}{\lie k}
\newcommand{\lf}{\lie l}
\newcommand{\tf}{\lie t}
\newcommand{\SL}{\Lie{SL}}
\newcommand{\SU}{\Lie{SU}}
\newcommand{\Un}{\Lie{U}}
\newcommand\C{{\mathbb C}}
\newcommand{\HH}{{\mathbb H}}
\newcommand{\R}{{\mathbb R}}
\numberwithin{proposition}{section}
\title{Hyperk\"ahler Implosion and Nahm's Equations}
\author{Andrew Dancer}
\address[Dancer]{Jesus College\\
Oxford\\
OX1 3DW\\
United Kingdom} \email{dancer@maths.ox.ac.uk}
\author{Frances Kirwan}
\address[Kirwan]{Balliol College\\
Oxford\\
OX1 3BJ\\
United Kingdom} \email{kirwan@maths.ox.ac.uk}
\author{Markus R\"oser}
\address[R\"oser]{Institut f\"ur Differentialgeometrie\\
Welfengarten 1\\
30167 Hannover\\
Germany} \email{markus.roeser@math.uni-hannover.de}
\date{\today}
\begin{document}

\maketitle
\begin{abstract}
We use Nahm data to describe candidates for the universal hyperk\"ahler
implosion with respect to a compact Lie group.
\end{abstract}

\tableofcontents

\section{Introduction}

Symplectic implosion is an abelianisation construction in symplectic
geometry introduced by Guillemin, Jeffrey and Sjamaar \cite{GJS:2002}.
Given a symplectic manifold with a Hamiltonian action of a compact 
Lie group $K$,
the implosion (or imploded cross-section) of the manifold is a stratified symplectic space with
an action of the maximal torus $T$ of $K$, and the symplectic reduction
of the implosion by $T$ can be identified with the reduction of the
original manifold by $K$.

The key step is to define the implosion $(T^*K)_{\rm impl}$ 
of the cotangent bundle $T^*K$
with respect to one of the commuting $K$ actions on this space.
This example is universal in the sense that the implosion of
a general manifold $M$ with $K$-action may be obtained by symplectic
reduction from the product $M \times (T^*K)_{\rm impl}$. 
Symplectic reductions of the universal implosion by the maximal torus give the
coadjoint orbits of $K$.  The implosion $(T^*K)_{\rm impl}$ may be
defined by taking the product of $K$ with the closed positive Weyl
chamber, stratifying by the centraliser $C$ of points in the closed
Weyl chamber, and then collapsing by the commutator subgroup of $C$. In
particular, no collapsing occurs on the open dense set which is the
product of $K$ with the interior of the Weyl chamber.  As explained in
\cite{GJS:2002}, the implosion may also be viewed as an affine variety
with an algebro-geometric stratification that agrees with the
symplectic stratification above.

In \cite{DKS} a hyperk\"ahler analogue of the symplectic implosion
was introduced for the case $K = SU(n)$. The space is an
affine variety and hyperk\"ahler reductions by the maximal torus 
give the Kostant varieties, which are affine completions
of complex coadjoint orbits. 

The construction of \cite{DKS} works by using quiver diagrams, and
is related to the description of complex coadjoint orbits for $SU(n)$
via quivers. For other groups, no such finite-dimensional
description of the coadjoint orbits is as yet available, except 
in the case of the nilpotent variety \cite{KS}. Instead, 
complex coadjoint orbits have been realised via gauge-theoretic methods
as moduli spaces of solutions to the Nahm equations on the half-line
\cite{Kronheimer:1990}, \cite{Kronheimer:nilpotent},
\cite{Biquard:1996}, \cite{Kovalev:1996}.

The sequel \cite{DKS-Sesh} to \cite{DKS} included a possible
approach to hyperk\"ahler implosion for general compact groups 
using the Nahm equations. In this paper we shall develop an analytical
framework for this approach. We introduce in $\S 3$ a space
of Nahm data modulo gauge transformations 
and stratify it by the centraliser $C$ of the asymptotic
triple $\tau$. We obtain a candidate for the universal hyperk\"ahler implosion $\mathcal Q$ by performing further collapsings
by gauge transformations valued in $[C,C]$ at infinity.
This can be viewed as an analogue of the collapsing procedure
mentioned above in the construction of the symplectic implosion. 
It turns out, however, that this construction does not reproduce the quiver moduli space 
in the case $K=\SU(n)$, as the torus action on the bottom stratum corresponding to $C=K$ 
is trivial in the case $K$ is semi-simple ($K= [K,K]$). A modification of this first construction is described later (see Remark \ref{comparison}) which, at least for $K=\SU(2)$ (see $\S$8), can be identified with the quiver construction from  \cite{DKS}.

We show that fixing the asymptotic triple
$\tau$ and further quotienting out by a torus yields the corresponding Kostant variety; this holds both for our first  construction and its modification. 
In order to prove this, we introduce in $\S 4$ related spaces of Nahm data
which admit hyperk\"ahler structures with $T$ action, such that the hyperk\"ahler
moment map for $T$ gives the asymptotic triple $\tau$. The hyperk\"ahler quotient,
in a suitable stratified sense, is identified both with the space obtained by 
fixing $\tau$ in $\mathcal Q$ and quotienting by $T$, and also with the Kostant variety.

The tangent vectors to the spaces of solutions to the Nahm equations that we use are not necessarily square integrable. 
Therefore, instead of using the $L^2$-metric to define our hyperk\"ahler structure, we use a modification of it modelled on work 
of Bielawski \cite{Bielawski:1998} on monopole moduli spaces. In our situation the Bielawski metric is nondegenerate but may 
not be positive definite, although the induced metric on any Kostant variety is always a positive definite hyperk\"ahler metric.

In $\S 6$ we discuss how $\mathcal Q$ could play the role of universal hyperk\'ahler implosion. Let us define the Nahm  hyperk\"ahler implosion of a hyperk\"ahler manifold $M$ with a tri-hamiltonian $K$-action to be the hyperkahler quotient of $M \times \mathcal Q$ by the diagonal action of $K$. Then $\mathcal Q$ may be viewed as the Nahm
hyperk\"ahler implosion of $T^* K_{\mathbb C}$, where $K_\C$ is the complexification of $K$, and is thus a
hyperk\"ahler analogue of the universal symplectic implosion
$(T^*K)_{\rm impl}$. In $\S 7$ we produce a holomorphic description of
our strata in $\mathcal Q$ by making a choice of complex structure such that the asymptotic triple $\tau$ satisfies the
condition of Biquard regularity. In $\S 8$ we illustrate the construction by discussing the $\SU(2)$-case and point out some of the differences to the quiver construction. Finally $\S 9$ gives a brief
discussion of how symplectic implosion may be interpreted in terms
of Nahm data.

{\bf Notation}. We shall use $M \hkq K$ 
\index{$M\hkq K$} 
to denote the hyperk\"ahler
quotient of a space $M$ by a group $K$.

\begin{acknowledgements}
It is a pleasure to thank Roger Bielawski for useful
conversations.
\end{acknowledgements}

\section{Nahm's equations on the half-line}

Let us recall some facts concerning the Nahm equations.
These are the ordinary differential equation system
\begin{equation} \label{Nahm}
  \frac{dT_i}{dt} + [T_0 , T_i] = [T_j, T_k], \qquad \text{\( (ijk) \)
  cyclic permutation of \( (123) \),}
\end{equation}
where $T_i$ takes
values in the Lie algebra \( \mathfrak k \) of $K$. They can be studied
on different intervals $\mathcal I$ but we shall be primarily interested in the equations
on the half-line $[0,\infty)$.
There is a gauge group action
\begin{equation} \label{gauge}
  T_0 \mapsto u T_0 u^{-1} - \dot{u} u^{-1},\qquad 
  T_i \mapsto u T_i u^{-1} \ (i=1,2,3),
\end{equation}
where \( u \colon \mathcal{I} \mapsto K \).
We shall often denote the image of $T_i$ under $u$ by $u.T_i$.

\medskip
The constructions of complex coadjoint orbits mentioned above by
Kronheimer, Biquard and Kovalev
\cite{Kronheimer:1990}, \cite{Kronheimer:nilpotent},\cite{Biquard:1996}, \cite{Kovalev:1996}
proceed by considering suitable moduli spaces of Nahm data on $\mathcal I = [0,\infty)$ 
with prescribed asymptotics. It can be shown that Nahm matrices 
can always be put into a gauge in which they have asymptotics
\begin{equation*}
  T_i = \tau_i + \frac{\sigma_i}{2(t+1)} + \dotsb \qquad (i=1,2,3),
\end{equation*}
where $\tau=(\tau_1, \tau_2, \tau_3)$ is a commuting triple.
Moreover $\sigma_i = \rho(e_i)$, where
$e_1,e_2,e_3$ is a standard basis for $\su(2)$ and $\rho \colon \su(2)
\rightarrow \kf$ is a Lie algebra homomorphism, so $[\sigma_1, \sigma_2] =
-2 \sigma_3$ etc.  In addition we must have $[\tau_i, \sigma_j]=0$ 
for all $i,j$, so $\rho$ takes values in the Lie algebra
$\cf$ of the common centraliser $C:=C(\tau_1,\tau_2,\tau_3)$ of the triple 
$(\tau_1,\tau_2,\tau_3)$. We call $(\sigma_1, \sigma_2, \sigma_3)$
an {\em $\su(2)$-triple} in this situation.

The leading terms in fact define an exact solution to Nahm's equations 
\index{$T_{\tau,\sigma}$}
\[
T_{\tau,\sigma} =  \left(0, \left(\tau_i+\frac{\sigma_i}{2(t+1)}\right)_{i=1}^3\right),
\]
which we  call a {\em model solution}. 

In the work of
Kronheimer, Biquard and Kovalev a complex coadjoint orbit
is obtained by fixing the model solution, that is fixing the commuting triple
$\tau$, an $\su(2)$-triple $\sigma$ and the corresponding model solution $T_{\tau,\sigma}$, and considering a suitable
moduli space of solutions to the Nahm equations asymptotic to this model solution. If
$\sigma =0$ we obtain the semisimple orbits--in particular if
$\tau$ is regular in the sense that $C$ is the maximal torus $T$ then
$\sigma$ must be $0$ and we obtain regular semisimple orbits
\cite{Kronheimer:1990}. At the other extreme,
if $\tau=0$ we get the nilpotent orbits \cite{Kronheimer:nilpotent}.

In these arguments one uses the $L^2$ metric to define a 
hyperk\"ahler structure--this metric is finite because the $\tau$ are fixed.


\section{Stratifying Nahm data by the Centraliser of their Limit}
Let us now, as discussed in \cite{DKS-Sesh}, consider how one might
approach hyperk\"ahler implosion for a compact group $K$ in this framework. We wish to obtain
a hyperk\"ahler space (with some singularities) carrying commuting actions of $K$ and its maximal torus $T$,
such that the hyperk\"ahler reductions by the torus give the Kostant varieties,
that is, the varieties obtained by fixing values of the invariant
polynomials on the complex Lie algebra. These varieties are unions
of complex coadjoint orbits, and in the regular semsimple case
are exactly coadjoint orbits.

Let $\mathfrak k$ be the Lie algebra of $K$ equipped with a
bi-invariant metric $\langle,\rangle$. We will consider $\kf$-valued
Nahm matrices on $[0,\infty)$.  As we now want to obtain all
coadjoint orbits, we must allow the commuting triple and $\su(2)$-triple
to vary.  Accordingly we fix a Cartan sub-algebra $\mathfrak t \subset
\mathfrak k$, and consider Nahm data $T_i:[0,\infty)\to\mathfrak k$
which converge to a limit $T_i(\infty) =\tau_i\in \mathfrak t$, but we
allow the $\tau_i$ to vary in $\mathfrak t$.  As above we use $\tau$ 
to denote the commuting triple
$(\tau_1,\tau_2,\tau_3)$. 

We recall that the universal symplectic implosion may be stratified
by the centraliser of the element of the closed Weyl chamber.
Motivated by this, in the hyperk\"ahler situation we stratify the
space of Nahm matrices with limits in $\mathfrak t$ by the
centraliser $C = C(\tau_1,\tau_2,\tau_3)$ 
\index{$C$} 
of the limiting triple.

It is well-known that formally Nahm's equations can be interpreted
as the hyperk\"ahler moment map for the action of the gauge group on
the infinite-dimensional quaternionic vector space of Nahm data.

Our goal is to construct this  hyperk\"ahler quotient in such a way that  the subspace of solutions to Nahm's equations converging to a fixed triple $\tau$ with centraliser $C$ stratifies
into torus bundles over the strata (complex coadjoint orbits) of the
Kostant variety defined by $\tau$. Moreover, $\tau$
should be the value of the moment map for the action of the maximal torus
$T$ of $K$. The torus fibre over a
semisimple orbit will be $C/[C,C]$ which equals $T$
in the generic case when $C(\tau) = T$. For other strata the fibres
are quotients of $T$.
In this way the Kostant varieties
will arise as hyperk\"ahler quotients of our Nahm moduli space by $T$, as desired.

In order to realise the space of Nahm data with fixed centraliser as a hyperk\"ahler manifold, and to obtain the statements about moment maps, 
we have to provide an analytical framework. In particular, since we work on the half-line $[0,\infty)$, we have to specify the asymptotic behaviour of the Nahm data and gauge transformations. The $L^2$-metric
is no longer appropriate as it is infinite in directions which correspond to
varying $\tau$. We shall see that a modification due to Bielawski
\cite{Bielawski:1998} of the $L^2$ metric has better finiteness properties which fit in
well with our stratification by common centralisers.


\subsection{The Space of Nahm Data}
We fix a maximal torus $T$ in a simply-connected compact Lie group
$K$ with Lie algebra $\tf = \mathrm{Lie}(T)$. Biquard has proved in \cite{Biquard:1996} that any solution $(T_0,T_1,T_2,T_3)$ to the Nahm equations can be put into a gauge such that there exists a commuting triple $\tau = (\tau_1,\tau_2,\tau_3)$ 
of elements $\tau_i$ in $\tf$ with common centraliser $C =C(\tau)\subset K$ such that
\begin{eqnarray*}
T_0^D &=& 0, \\
T_i^D  &=& \tau_i +\frac{\sigma_i}{2(t+1)} + \mathrm O((1+t)^{-(1+\zeta)}),\quad i=1,2,3, \quad \text{for some $\zeta = \zeta(\sigma)>0$,}\\
T_i^H &=& \mathrm O (e^{-\eta t}) \quad i=0,1,2,3, \quad \text{for some $\eta = \eta(\tau)>0$.}
\end{eqnarray*}
We have used Biquard's notation in \cite{Biquard:1996} to write a Lie algebra element $\xi\in \kf$ as $\xi^D +\xi^H$ 
\index{$\xi^D, \xi^H$} 
with respect to the orthogonal decomposition $\kf = \cf \oplus \cf^\perp$.
Moreover, $\sigma = (\sigma_1,\sigma_2,\sigma_3)$ is an $\su(2)$-triple in $\cf=\mathrm{Lie}(C)$, i.e. $-2\sigma_i = [\sigma_j,\sigma_k]$ whenever $(ijk)$ is an even permutation of $(123)$. This condition means in particular that $\sigma_i\in[\cf,\cf]$.

This asymptotic behaviour is a result of the existence of a Coulomb gauge, as proved in \cite{Biquard:1996}, Proposition 2.1 and Corollary 2.2: Writing the solution $T = (T_0,T_1,T_2,T_3)$ as $(0,\tau_1,\tau_2,\tau_3) + \tilde T$, Biquard showed that if $\tilde T$ is sufficiently small in a suitable norm, then one can put $T$ into Coulomb gauge with respect to the model solution $(0,\tau_1,\tau_2,\tau_3)$. That is, $\tilde T$ satisfies 
$$\frac{d \tilde T_0}{dt} - \sum_{i=1}^3[\tau_i,\tilde T_i] = 0, \quad \tilde T_0(0) = 0.$$
Moreover, one has the estimates
\begin{equation}\label{CoulombEstimates}
\sup|(1+t)\tilde T_i^D|, \sup|(1+t)^2\tilde T_i^H|,\sup|(1+t)^2\dot{\tilde{T_i}}|\leq \kappa\sup_{i,j}\{|(1+t)^2(\dot T_i+[T_0,T_i])|, |(1+t)^2[T_i,T_j]|\},
\end{equation}
where $\kappa>0$ is a constant.
Combining these estimates, the Coulomb gauge condition and Nahm's equations, a Weitzenb\"ock argument then gives the exponential decay of the $H$-components, whose rate $\eta$ only depends on the $\tau_i$. The constraint is that $\eta$ should be less than or equal the first positive eigenvalue of 
$\sum_{i=1}^3 -\mathrm{ad}(\tau_i)^2$.

The parameter $\zeta >0$ has to be chosen as follows. Define the Chern-Simons function 
$$\phi: \mathfrak c^3 \to \mathbb R, \qquad \phi(\xi_1,\xi_2,\xi_3) = \sum_{i=1}^3\langle \xi_i,\xi_i\rangle + \langle \xi_1,[\xi_2,\xi_3]\rangle,$$
whose critical points are precisely the $\mathfrak{su}(2)$-triples. 
Also define the Casimir operator associated with the adjoint action of the $\mathfrak{su}(2)$-triple $\sigma$ on $\mathfrak c$ to be 
$$\gamma(\sigma) = - \sum_{i=1}^3 \mathrm{ad}(\sigma_i)^2\in \mathrm{End}(\mathfrak c).$$
Then in the above gauge fixing result  Biquard requires $\zeta$ to be chosen less than or equal to the first positive eigenvalue of $\mathrm{Hess}(\phi)_\sigma$. Moreover, in order to make certain transversality arguments work, he needs $\zeta$ to be smaller than the first positive eigenvalue of the Casimir $\gamma(\sigma)$.

An easy calculation leads to the following observation: 
\begin{lemma}
Let $\sigma$ be a critical point of $\phi$, i.e. an  $\mathfrak{su}(2)$-triple in $\mathfrak c$ and let $u\in C$. Then the Hessian of $\phi$ satisfies 
$$\mathrm{Hess}(\phi)_{u\sigma u^{-1}}(\xi,\psi) = \mathrm{Hess}(\phi)_{\sigma}(u^{-1}\xi u,u^{-1}\psi u).$$
In particular, the eigenvalues of the Hessian  $\mathrm{Hess}(\phi)_\sigma$ at $\sigma$ depend only on the conjugacy class of $\sigma$.
\end{lemma}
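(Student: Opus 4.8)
The plan is to exploit the fact that $\phi$ is invariant under the diagonal adjoint action of $C$ on $\mathfrak c^3$, and that this action is \emph{linear}, so that the asserted transformation law for the Hessian is nothing more than the chain rule for second derivatives. For $u\in C$ write $L_u\colon\mathfrak c^3\to\mathfrak c^3$ for the linear isomorphism $(\xi_1,\xi_2,\xi_3)\mapsto(u\xi_1u^{-1},u\xi_2u^{-1},u\xi_3u^{-1})$; this does map $\mathfrak c^3$ to itself since $u\in C$ normalises $\mathfrak c=\mathrm{Lie}(C)$. The first step is to check $\phi\circ L_u=\phi$. The quadratic part $\sum_i\langle\xi_i,\xi_i\rangle$ is preserved because $\langle,\rangle$ is $\mathrm{ad}$-invariant, and the cubic term is preserved using $[u\xi_2u^{-1},u\xi_3u^{-1}]=u[\xi_2,\xi_3]u^{-1}$ together with the same $\mathrm{ad}$-invariance. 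In other words $\phi$ is the restriction of a $K$-invariant function on $\mathfrak k^3$, and we merely use the invariance under $C$.

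The second step uses the linearity of $L_u$: the chain rule for the second derivative gives
$$\mathrm{Hess}(\phi\circ L_u)_x(v,w)=\mathrm{Hess}(\phi)_{L_u(x)}(L_uv,L_uw)$$
for all $x,v,w\in\mathfrak c^3$. Combining this with $\phi\circ L_u=\phi$, setting $x=\sigma$, and taking $v=u^{-1}\xi u$, $w=u^{-1}\psi u$ (so that $L_u(\sigma)=u\sigma u^{-1}$ while $L_uv=\xi$ and $L_uw=\psi$) produces exactly the stated identity. Note that this argument never uses that $\sigma$ is a critical point; the bilinear-form identity holds at every point of $\mathfrak c^3$, the hypothesis only being relevant because it is at critical points that one cares about $\mathrm{Hess}(\phi)$ intrinsically.

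For the final assertion I would pass from the bilinear form to the self-adjoint operator $H_\sigma$ determined by $\mathrm{Hess}(\phi)_\sigma(v,w)=\langle H_\sigma v,w\rangle$. Feeding the identity just proved through the $\mathrm{ad}$-invariance of $\langle,\rangle$ (to move the conjugation from the second argument over to the operator) yields
$$H_{u\sigma u^{-1}}=L_u\circ H_\sigma\circ L_u^{-1}.$$
Since $L_u$ is orthogonal, again by $\mathrm{ad}$-invariance of $\langle,\rangle$, this is an orthogonal similarity, so $H_{u\sigma u^{-1}}$ and $H_\sigma$ have the same spectrum; letting $u$ range over $C$ gives the stated dependence only on the conjugacy class of $\sigma$. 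There is no genuine obstacle here: the only care required is the bookkeeping when converting between the bilinear-form and operator pictures, and the consistent use of $\mathrm{ad}$-invariance of $\langle,\rangle$ in both roles, namely to make $\phi$ invariant under $L_u$ and to make $L_u$ itself orthogonal.
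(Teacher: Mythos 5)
Your proof is correct, but it takes a different route from the paper's. The paper first computes $\mathrm{Hess}(\phi)_\sigma(\xi,\psi)$ explicitly as $\sum_i\langle\xi_i,2\psi_i+\cdots\rangle$ by differentiating $\phi(\sigma+s\xi+t\psi)$ twice, and then verifies the transformation law term by term using the identity $[u\sigma_iu^{-1},\psi_j]=u[\sigma_i,u^{-1}\psi_ju]u^{-1}$ together with bi-invariance of $\langle\,,\rangle$. You instead establish the $C$-invariance $\phi\circ L_u=\phi$ and invoke the chain rule for second derivatives along the linear map $L_u$; the same two ingredients (Ad-invariance of the inner product and naturality of the bracket under conjugation) appear, but packaged abstractly rather than checked on an explicit formula. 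Your argument is the more conceptual one and applies verbatim to any function on $\mathfrak c^3$ invariant under a linear action by isometries, and you are right that criticality of $\sigma$ plays no role in the bilinear identity; what the paper's computation buys is the explicit expression for the Hessian, which is the object whose first positive eigenvalue actually constrains the decay parameter $\zeta$ later in \S 3. Your passage from the bilinear form to the self-adjoint operator $H_{u\sigma u^{-1}}=L_u\circ H_\sigma\circ L_u^{-1}$, using orthogonality of $L_u$, correctly delivers the conjugation-invariance of the spectrum.
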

\begin{proof}
The Hessian of $\phi$ at the point $\sigma\in \mathfrak c^3$ applied to $\xi,\psi\in\mathfrak c^3$ is given by 
\begin{eqnarray*}
\mathrm{Hess}(\phi)_\sigma(\xi,\psi) &=&\frac{d^2}{dsdt}|_{(s,t) = (0,0)}\phi(\sigma +s\xi+t\psi)\\
&=& \langle \xi_1, 2\psi_1-[\sigma_3,\psi_2] + [\sigma_2,\psi_3]\rangle+ \langle \xi_2,2\psi_2 +[\sigma_3,\psi_1,]- [\sigma_1,\psi_3,]\rangle \\ & & + \langle \xi_3,2\psi_3+ [\sigma_1,\psi_2]- [\sigma_2,\psi_1]\rangle.\\
\end{eqnarray*}
Now since for any $u\in C$ we have 
\begin{equation}\label{conj}
[u\sigma_iu^{-1},\psi_j] = u[\sigma_i,u^{-1}\psi_ju] u^{-1},
\end{equation}
it follows from the bi-invariance of the metric that 
$$\mathrm{Hess}(\phi)_{u\sigma u^{-1}}(\xi,\psi) = \mathrm{Hess}(\phi)_{\sigma}(u^{-1}\xi u,u^{-1}\psi u).$$
\end{proof}
In addition, the relation (\ref{conj}) implies that 
$$\gamma(\mathrm{Ad}(u)(\sigma)) = \mathrm{Ad}(u)\circ \gamma(\sigma)\circ \mathrm{Ad}(u^{-1}).$$
So altogether we deduce the following fact.

\begin{corollary} \label{eigenval}
The eigenvalues of $\mathrm{Hess}(\phi)_\sigma$ and $\gamma(\sigma)$ and hence the constraints  on the parameter $\zeta$ only depend on the conjugacy class of $\sigma$ in $C$.
\end{corollary}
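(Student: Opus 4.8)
The plan is to read the corollary off directly from the preceding lemma together with the displayed covariance relation for $\gamma$, the only point requiring care being that conjugation by an element of $C$ acts isometrically. First I would record the key structural fact: since the metric $\langle,\rangle$ is bi-invariant, for every $u\in C$ the map $\mathrm{Ad}(u)\colon \mathfrak c\to\mathfrak c$, and hence its diagonal extension to $\mathfrak c^3$, is an orthogonal transformation. This is what converts the two covariance statements into statements about spectra.

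For the Hessian, the lemma asserts that $\mathrm{Hess}(\phi)_{u\sigma u^{-1}}(\xi,\psi) = \mathrm{Hess}(\phi)_{\sigma}(\mathrm{Ad}(u^{-1})\xi,\mathrm{Ad}(u^{-1})\psi)$; that is, the symmetric bilinear form $\mathrm{Hess}(\phi)_{u\sigma u^{-1}}$ is the pullback of $\mathrm{Hess}(\phi)_{\sigma}$ under the orthogonal map $\mathrm{Ad}(u^{-1})$. Pulling a symmetric bilinear form back by an orthogonal transformation relates the associated self-adjoint operators by orthogonal conjugation, which preserves eigenvalues, so $\mathrm{Hess}(\phi)_{u\sigma u^{-1}}$ and $\mathrm{Hess}(\phi)_{\sigma}$ have identical spectra.

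For the Casimir $\gamma$, the displayed relation $\gamma(\mathrm{Ad}(u)\sigma) = \mathrm{Ad}(u)\circ\gamma(\sigma)\circ\mathrm{Ad}(u^{-1})$ exhibits $\gamma$ at the conjugated triple as the conjugate of $\gamma(\sigma)$ by the invertible operator $\mathrm{Ad}(u)$; since conjugation by an invertible operator preserves eigenvalues, the spectra again agree. Combining the two observations, both the first positive eigenvalue of $\mathrm{Hess}(\phi)_{\sigma}$ and the first positive eigenvalue of $\gamma(\sigma)$ are unchanged when $\sigma$ is replaced by any $C$-conjugate, so they depend only on the conjugacy class of $\sigma$ in $C$. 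As the admissible range of $\zeta$ is pinned down precisely by these two eigenvalues, the constraint on $\zeta$ is likewise conjugacy-invariant.

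I expect essentially no genuine obstacle here: the substantive computation has been front-loaded into the lemma and the $\gamma$-covariance identity, and the corollary is a one-line spectral consequence once one has noted that $\mathrm{Ad}(u)$ is an isometry. The only place I would be careful is to state explicitly that bi-invariance is what guarantees orthogonality of $\mathrm{Ad}(u)$, since without that the pullback and conjugation arguments would no longer preserve eigenvalues.
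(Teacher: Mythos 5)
Your proposal is correct and follows exactly the paper's route: the corollary is deduced from the covariance of $\mathrm{Hess}(\phi)$ under conjugation established in the preceding lemma together with the displayed relation $\gamma(\mathrm{Ad}(u)(\sigma)) = \mathrm{Ad}(u)\circ\gamma(\sigma)\circ\mathrm{Ad}(u^{-1})$, conjugation preserving spectra. Your explicit remark that bi-invariance makes $\mathrm{Ad}(u)$ orthogonal, so that the pullback of the bilinear form corresponds to orthogonal conjugation of the associated self-adjoint operator, is a useful clarification the paper leaves implicit.
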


\medskip
Our gauge-theoretic framework will be based on certain weighted function spaces, which we introduce now. Fix $\zeta>0$ and introduce the Banach space of $\mathfrak k$-valued $C^1$-functions, which up to first derivative decay like $1/(1+t)^{1+\zeta}$: 
$$\Omega_\zeta(\kf) = \{f\in C^1([0,\infty), \mathfrak k)\ |\  \Vert f\Vert_{\Omega_\zeta} := \sup_{t}((1+t)^{1+\zeta}|f(t)| + (1+t)^{2+\zeta}|\dot f(t)|) < \infty\}. $$
\index{$\Omega_\zeta(\kf)$}
As usual we write $\dot f = \frac{df}{dt}$. Note that if $\zeta'>\zeta$, then we have a continuous inclusion $\Omega_{\zeta'}\subset \Omega_\zeta$. 

We also need to work with the Banach space $\Omega_{\exp;\eta}(\kf)$ 
\index{$\Omega_{\exp;\eta}(\kf)$} 
of $C^1$-functions which up to first derivative decay like $e^{-\eta t}$ for $\eta>0$:
$$\Omega_{\exp;\eta}(\mathfrak k) = \{f\in C^1([0,\infty), \mathfrak k) \ |\  \Vert f\Vert_{\Omega_{\exp;\eta}} := \sup_{t}(e^{\eta t}|f(t)|) + \sup_t(e^{\eta t}|\dot f(t)|) < \infty\}.$$
Again we have continuous inclusions $\Omega_{\exp;\eta'} \subset \Omega_{\exp;\eta}$ whenever $\eta'>\eta$.
We will also consider the space of exponentially decaying $C^1$-functions 
\index{$\Omega_{\exp}(\kf)$}
$$\Omega_{\exp}(\kf) = \bigcup_{\eta>0}\Omega_{\exp;\eta}(\kf) = \{f\in C^1([0,\infty), \mathfrak k) \ |\  \exists \eta>0 : f\in \Omega_{\exp;\eta}(\kf)\}.$$

Note that the latter space does not come equipped with a natural norm.

\begin{lemma}
Fix positive real numbers $\zeta >0$ and $\eta>0$. Then the spaces $\Omega_\zeta(\kf)$ and $\Omega_{\exp;\eta}(\kf)$ as well as $\Omega_{\exp}(\kf)$ 
are closed under point-wise Lie bracket. The Lie algebra $\mathfrak k$, considered as constant functions on $[0,\infty)$, acts on each of them by the adjoint action.
\end{lemma}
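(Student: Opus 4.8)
The plan is to exploit two elementary facts. First, on the finite-dimensional Lie algebra $\kf$ the bracket is a bounded bilinear map, so there is a constant $M>0$ with $|[X,Y]|\le M|X|\,|Y|$ for all $X,Y\in\kf$, where $|\cdot|$ denotes the norm of the bi-invariant metric $\langle,\rangle$. Second, for the pointwise bracket of two $C^1$-maps $f,g$ the Leibniz rule $\tfrac{d}{dt}[f,g]=[\dot f,g]+[f,\dot g]$ holds, the bracket being smooth as a bilinear map; in particular $h:=[f,g]$ is again $C^1$. Everything then reduces to weighted sup-norm estimates, and the single structural observation driving all of them is that each weight function $w(t)$ appearing in the definitions satisfies $w(t)\ge 1$ for $t\ge 0$, so that a product of two decaying factors decays at least as fast as a single one.

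For $\Omega_\zeta(\kf)$, if $f,g\in\Omega_\zeta(\kf)$ then $|f(t)|,|g(t)|\le C(1+t)^{-(1+\zeta)}$ and $|\dot f(t)|,|\dot g(t)|\le C(1+t)^{-(2+\zeta)}$. Boundedness of the bracket gives $|h(t)|\le M|f(t)|\,|g(t)|\le MC^2(1+t)^{-(2+2\zeta)}$, and the Leibniz rule gives $|\dot h(t)|\le M(|\dot f|\,|g|+|f|\,|\dot g|)\le 2MC^2(1+t)^{-(3+2\zeta)}$. Since $(1+t)^{-(2+2\zeta)}\le (1+t)^{-(1+\zeta)}$ and $(1+t)^{-(3+2\zeta)}\le (1+t)^{-(2+\zeta)}$ for all $t\ge 0$, this yields $\|h\|_{\Omega_\zeta}\le 3MC^2<\infty$, so $h\in\Omega_\zeta(\kf)$; in fact one even obtains the submultiplicative bound $\|[f,g]\|_{\Omega_\zeta}\le 3M\|f\|_{\Omega_\zeta}\|g\|_{\Omega_\zeta}$. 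The argument for $\Omega_{\exp;\eta}(\kf)$ is identical with the weight $e^{\eta t}$ in place of $(1+t)^{1+\zeta}$: from $|f|,|\dot f|,|g|,|\dot g|\le Ce^{-\eta t}$ one obtains $|h|,|\dot h|\le 2MC^2e^{-2\eta t}\le 2MC^2e^{-\eta t}$ for $t\ge 0$, so $h\in\Omega_{\exp;\eta}(\kf)$ (indeed $h\in\Omega_{\exp;2\eta}(\kf)$). Closure of $\Omega_{\exp}(\kf)=\bigcup_{\eta>0}\Omega_{\exp;\eta}(\kf)$ then follows: if $f\in\Omega_{\exp;\eta_1}(\kf)$ and $g\in\Omega_{\exp;\eta_2}(\kf)$, then by the continuous inclusions both lie in $\Omega_{\exp;\eta}(\kf)$ for $\eta=\min(\eta_1,\eta_2)$, whence $[f,g]\in\Omega_{\exp;\eta}(\kf)\subset\Omega_{\exp}(\kf)$.

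For the adjoint action of a constant $\xi\in\kf$, I note that one cannot simply invoke closure under bracket, since the constant function $\xi$ does not decay and hence is not itself an element of any of these spaces. Instead I argue directly: since $\xi$ is constant, $\tfrac{d}{dt}[\xi,f]=[\xi,\dot f]$, so both $[\xi,f]$ and its derivative are obtained from $f$ and $\dot f$ by applying the fixed bounded operator $\mathrm{ad}(\xi)=[\xi,\,\cdot\,]$, which satisfies $|[\xi,Y]|\le M|\xi|\,|Y|$. Hence $|[\xi,f(t)]|\le M|\xi|\,|f(t)|$ and $|[\xi,\dot f(t)]|\le M|\xi|\,|\dot f(t)|$ have exactly the same decay as $|f|$ and $|\dot f|$ respectively, so $[\xi,f]$ lies in the same space as $f$, with $\|[\xi,f]\|\le M|\xi|\,\|f\|$ in the two normed cases and membership in $\Omega_{\exp}(\kf)$ (at the same rate $\eta$) in the remaining case. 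That $\xi\mapsto\mathrm{ad}(\xi)$ is a genuine Lie algebra action is then automatic from the pointwise Jacobi identity.

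There is no serious obstacle here; the content is entirely the boundedness of the bracket on the finite-dimensional algebra $\kf$ together with the elementary fact that the defining weights are $\ge 1$ on $[0,\infty)$, so that products of decaying factors decay at least as fast. The only point requiring a moment's care is that the adjoint action of constants must be treated separately from closure under bracket, precisely because the constant functions do not themselves lie in the decaying function spaces.
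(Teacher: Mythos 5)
Your proof is correct and follows essentially the same route as the paper's: the pointwise bound $|[\xi,\psi]|\le 2|\xi||\psi|$ (your constant $M$), the Leibniz rule for the derivative, and the observation that the product of two decaying weights decays at least as fast as one. The paper leaves the exponential and union cases, and the adjoint action of constants, as ``similar''; you have simply written those out, including the correct remark that the constant functions are not themselves in the decaying spaces and so the adjoint action needs its own (easy) argument.
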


\begin{proof}
By the definition of $\Omega_\zeta(\kf)$ there exist constants $A_1,A_2$ for $\xi \in\Omega_\zeta(\kf)$ such that 
$$|\xi(t)| \leq \frac{A_1}{(1+t)^{1+\zeta}}, \qquad |\dot\xi(t)| \leq\frac{A_2}{(1+t)^{2+\zeta}}.$$
The standard inequalities 
$$|[\xi(t),\psi(t)]| \leq 2|\xi(t)||\psi(t)|$$
and 
$$\left|\frac{d}{dt}[\xi(t),\psi(t)]\right| \leq 2(|\dot\xi(t)||\psi(t)| +|\xi(t)||\dot\psi(t)|)$$
give the result for $\Omega_\zeta(\kf)$. 

The proofs for $\Omega_{\exp;\eta}(\kf)$ and $\Omega_{\exp}(\kf)$ work in a similar fashion.
\end{proof}

When it is clear in which Lie algebra the functions we are considering take their values, we will often drop the $\kf$ from the notation and simply write $\Omega_\zeta$ 
\index{$\Omega_\zeta$} 
etc. 

\bigskip
We define the space $\mathcal A$ 
\index{$\mathcal A$} 
of Nahm data to be the space of all $C^1$-functions $T=(T_0,T_1,T_2,T_3): [0,\infty) \to \kf \otimes \R^4$ such that there exists a commuting triple $\tau\subset \tf$ and an $\su(2)$-triple $\sigma\subset \cf(\tau) = \mathrm{Lie}(C(\tau))$ with 
$$T_0, T_i-\tau_i-\frac{\sigma_i}{2(t+1)} \in\Omega_{\zeta}(\cf(\tau))\oplus\Omega_{\exp}(\cf(\tau)^\perp)\quad i=1,2,3.$$
Here $\zeta$ is fixed as follows.
As remarked in the introduction, the space $\mathcal A$ is stratified by the centraliser $C$ of the limiting triple $\tau$. After applying an $SO(3)$ rotation
(which does not alter $C$)
one can make $\tau$ Biquard-regular, that is $C(\tau) = C(\tau_2, \tau_3)$,
and now $K_{\C} /C_{\C}$ is a semisimple orbit for $K_{\C}$ \cite{Kovalev:1996},
\cite{Biquard:1996}, and $C_{\C}$ and $C$ are
connected as we are assuming $K$ simply-connected 
\cite{Springer-Steinberg:1970}. Note also that
as the maximal torus $T$ of $K$ is contained in $C$, then $T$ is
a maximal torus in $C$ and hence contains the identity component of $Z(C)$
as the latter is fixed by conjugation in $C$. In particular
$Z(\cf) \subset \tf$.

It has been observed in \cite{DKS-Sesh} that the space of all $\mathfrak{su}(2)$-triples in $\mathfrak c$ is the union of finitely many $C$-orbits, where $C$ acts by conjugation (see also the appendix in \cite{Kronheimer:nilpotent}). Moreover, there are only finitely many strata, as there are only finitely many subgroups of $K$ that appear as centralisers of elements of $\tf$, and hence only finitely many subgroups that appear as triple intersections of these centralisers. Therefore, the above discussion in Corollary \ref{eigenval}
shows that we can choose a single $\zeta>0$ such that the Biquard constraints are satisfied simultaneously for all $\su(2)$-triples $\sigma\subset\cf$ and all centralisers $\cf$.

\begin{definition}
Fix a compact subgroup $C\subset K$ centralising a triple $(\tau_1,\tau_2,\tau_3)$ in $\tf$. We define 
the stratum 
\index{$\mathcal A_{C}$}
$$\mathcal A_{C} = \left\{T\in\mathcal A \ | \ T_0, T_i-\tau_i-\frac{\sigma_i}{2(t+1)} \in\Omega_{\zeta}(\cf)\oplus\Omega_{\exp}(\cf^\perp), i=1,2,3, \; : \;
C(\tau) = C, \;\; \sigma\subset \cf\right\}.$$
\end{definition}

The space  $\mathcal A_{C}$ consists thus of Nahm data asymptotic to a model solution
$T_{\tau,\sigma} =  \left(0, \left(\tau_i+\frac{\sigma_i}{2(t+1)}\right)_{i=1}^3\right)$, but $\sigma$ and $\tau$ are allowed to vary subject to the above relations and the constraint that $C(\tau) = C$. Note that the $\tau$ satisfying this condition form an open set $U$ in a vector
subspace of $\tf^3$, while the condition that $\sigma \subset \cf$ defines a compact
space $V$ of triples $\sigma$, acted on by $C$ with finitely many orbits. Now
$\mathcal A_{C}$ is an affine bundle over $U \times V$ with fibres copies of
$\Omega_{\zeta} (\cf) \oplus \Omega_{\exp}(\cf^\perp)$.

\bigskip
At a point $T = (T_0,T_1,T_2,T_3) = T_{\tau,\sigma} +\tilde{T}$, with $\tilde{T}\in (\Omega_\zeta(\cf)\oplus\Omega_{\exp}(\cf^\perp))^4$, a tangent vector to $\mathcal A_{C}$ may be written as a quadruple 
$X  = (X_0,X_1,X_2,X_3):[0,\infty) \to \mathfrak k\otimes \mathbb R^4$.

We use the notation 
\index{$X_{\delta,\epsilon}$}
 $X_{\delta,\epsilon} = \left(0, \left(\delta_i+\frac{\epsilon_i}{2(t+1)}\right)_{i=1}^3\right)$ 
and can write $X - X_{\delta,\epsilon} \in (\Omega_\zeta(\cf)\oplus\Omega_{\exp}(\cf^\perp))^4$ for some triples $\delta, \epsilon$.
In Biquard's notation this means
$$X_0^D, X_i^D-\delta_i-\frac{\epsilon_i}{2(t+1)} \in \Omega_\zeta(\cf),\quad i=1,2,3,  \qquad X_i^H\in \Omega_{\exp}(\cf^\perp), \quad i=0,1,2,3.$$
We shall see shortly that $\delta, \epsilon$ must satisfy certain algebraic relations implied by the condition $[\tau_i,\sigma_j] = 0$ and the other algebraic properties of the $\tau_i$'s and $\sigma_i$'s.

Observe that since $X_0$ satisfies asymptotic conditions different from the ones satisfied by the other $X_i$ the spaces $\mathcal A_{C}$ do \emph{not} carry the natural hypercomplex structure coming from the identification $\R^4\cong \mathbb H$.

As mentioned above, the $\delta_i$ terms mean that tangent vectors
are not in general $L^2$. To deal with this difficulty, rather than use
the $L^2$ metric 
we shall put on $\mathcal A_{C}$ Bielawski's symmetric bilinear form (see \cite{Bielawski:1998}): 
\index{$\Vert X\Vert_{B,b}^2$}
$$\Vert X\Vert_{B,b}^2 = b\sum_{i=0}^3 \langle X_i(\infty), X_i(\infty)\rangle + \int_0^\infty \sum_{i=0}^3(\langle X_i(t), X_i(t)\rangle - \langle X_i(\infty), X_i(\infty)\rangle)dt,$$
where $b\in\mathbb R$ is an arbitrary real constant.
 
In order to check that this gives a well-defined  metric on $\mathcal A_{C}$, i.e. that $\Vert X\Vert_{B,b}^2 <\infty$ for any tangent vector $X$, we have to collect some properties of the $\delta_i = X_i(\infty)$ and $\epsilon_i$ implied by the asymptotic conditions we imposed on our Nahm data. 
The next lemma is essentially contained in \cite{DKS-Sesh}.

\begin{lemma} \label{algrel} 
For a tangent vector $X = X_{\delta,\epsilon} +\tilde X \in T_{T} \mathcal A_{C}$, where $T = T_{\tau,\sigma}+\tilde{T} \in \mathcal A_{C}$, we have the following algebraic relations:
\begin{enumerate}
\item $\delta_i\in Z(\mathfrak c)$, that is $[\delta_i, \gamma] = 0 \quad \forall \gamma\in \mathfrak c,\forall i\geq 1$. In particular $[\delta_i,\tau_j] = 0$
for all $i,j\geq 1$,
\item $[\tau_i,\epsilon_j] = 0 \quad \forall i,j,$
\item $[\delta_i,\epsilon_j] = 0,$
\item $\langle\gamma,\delta_i\rangle = 0 \quad\forall \gamma \in [\mathfrak c,\mathfrak c], i = 1,2,3.$
\item $\langle \delta_i, \epsilon_i \rangle =0$.
\end{enumerate}
\end{lemma}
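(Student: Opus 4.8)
The plan is to read off all five relations from the description of $\mathcal A_C$, recorded just before the lemma, as an affine bundle over the parameter space $U\times V$, where $U\subset Z(\cf)^3$ is the open set of commuting triples $\tau\subset\tf$ with $C(\tau)=C$ and $V\subset[\cf,\cf]^3$ is the compact space of $\su(2)$-triples in $\cf$. Under this identification the summand $X_{\delta,\epsilon}$ of a tangent vector is exactly the derivative of the model-solution part, so that $(\delta,\epsilon)=(\dot\tau,\dot\sigma)$ is the image of $X$ under the projection $T_T\mathcal A_C\to T_{(\tau,\sigma)}(U\times V)$ computed along a curve $s\mapsto T_{\tau^{(s)},\sigma^{(s)}}+\tilde T^{(s)}$ in $\mathcal A_C$. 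Thus $\delta\in T_\tau U$ and $\epsilon\in T_\sigma V$, and the whole lemma reduces to two membership facts together with the orthogonal decomposition $\cf=Z(\cf)\oplus[\cf,\cf]$ of the reductive algebra $\cf$ with respect to the bi-invariant metric.

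First I would pin down the two memberships. Since $C(\tau^{(s)})=C$ for all $s$, each $\tau_i^{(s)}$ commutes with all of $C$, i.e.\ $\tau_i^{(s)}\in Z(\cf)$ (recall $Z(\cf)\subset\tf$); differentiating gives $\delta_i=\dot\tau_i\in Z(\cf)$, which is precisely relation (1), and since $\tau_j\in\cf$ this already yields $[\delta_i,\tau_j]=0$. Similarly, as $\sigma^{(s)}$ is an $\su(2)$-triple in $\cf$ for every $s$, each $\sigma_i^{(s)}=-\tfrac12[\sigma_j^{(s)},\sigma_k^{(s)}]$ lies in $[\cf,\cf]$, and differentiating a curve that stays inside the closed linear subspace $[\cf,\cf]^3$ shows $\epsilon_i=\dot\sigma_i\in[\cf,\cf]\subset\cf$. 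This last step sidesteps any need to analyse the possibly singular local structure of $V$: I use only that $V\subset[\cf,\cf]^3$, so every tangent direction to $V$ lies in that subspace.

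With $\delta_i\in Z(\cf)$ and $\epsilon_j\in[\cf,\cf]\subset\cf$ in hand, the remaining relations are immediate. Relation (2) holds because $\tau_i\in Z(\cf)$ centralises all of $\cf$, whence $[\tau_i,\epsilon_j]=0$; relation (3) follows identically from $\delta_i\in Z(\cf)$ and $\epsilon_j\in\cf$. For relations (4) and (5) I would invoke the orthogonality $\langle Z(\cf),[\cf,\cf]\rangle=0$, a one-line consequence of ad-invariance: for $z\in Z(\cf)$ and $[a,b]\in[\cf,\cf]$ one has $\langle z,[a,b]\rangle=-\langle[a,z],b\rangle=0$. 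Applying this with $z=\delta_i$ gives relation (4), and with $z=\delta_i$ and $[a,b]=\epsilon_i$ gives relation (5); in fact the same computation yields $\langle\delta_i,\epsilon_j\rangle=0$ for all $i,j$.

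The only genuine subtlety, and the step I would treat most carefully, is the justification of the opening reduction: that in the Banach-manifold setting an arbitrary $X\in T_T\mathcal A_C$ decomposes so that its base part $(\delta,\epsilon)$ is tangent to $U\times V$. This is guaranteed by the affine-bundle structure, but I would make explicit that the asymptotic data $\delta_i=X_i(\infty)$ and $\epsilon_i$ arise as $(\dot\tau,\dot\sigma)$ for an admissible variation of the model solution, since it is this identification --- rather than any further analytic estimate --- that forces $\delta\in Z(\cf)^3$ and $\epsilon\in[\cf,\cf]^3$. Once that is in place, the algebra above completes the proof.
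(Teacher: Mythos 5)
Your proof is correct and follows essentially the same route as the paper's: both arguments reduce everything to the two memberships $\delta_i\in Z(\cf)$ (tangency to the space of triples with centraliser exactly $C$, which sits inside $Z(\cf)^3$) and $\epsilon_i\in[\cf,\cf]$ (tangency to the space of $\su(2)$-triples in $\cf$), after which (2), (3) follow from $Z(\cf)$ centralising $\cf$ and (4), (5) from the ad-invariance computation $\langle[\gamma_1,\gamma_2],\delta_i\rangle=-\langle\gamma_2,[\gamma_1,\delta_i]\rangle=0$. Your explicit justification of why the asymptotic data $(\delta,\epsilon)$ is tangent to $U\times V$ is a slightly fuller account of what the paper asserts in one line, but the mathematics is the same.
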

\begin{proof}
(1) The triple  $(\delta_1,\delta_2,\delta_3)$ is tangent to the space of commuting triples with centraliser $C$. This implies in particular that 
$$[\delta_i, \gamma] = 0 \quad \forall \gamma\in \mathfrak c.$$ 
(2) We have $\sigma_i \in [\cf,\cf]$ so $\epsilon_i \in [\cf,\cf]\subset\cf$,
which implies
$$[\tau_i,\epsilon_j] = 0 \quad \forall i,j.$$
Alternatively, observe that differentiating the relation 
$[\tau_i, \sigma_j] =0$ gives
\[
[\tau_i, \epsilon_j] + [\delta_i, \sigma_j] =0
\]
and the second term vanishes by (1).

(3)
Since $[\delta_i,\gamma] = 0$ for any $\gamma\in\mathfrak c$ we observe that
$$[\delta_i,\epsilon_j] = 0.$$ 

(4)
Now for any $\xi \in \mathfrak k$ the map $\mathrm{ad}(\xi) = [\xi,-] \in \mathrm{End}(\mathfrak k)$ is skew-symmetric with respect to the invariant inner product we fixed. Since $\delta_i$ is contained in the the centre $Z(\mathfrak c)$, i.e. $\mathrm{ad}(\delta_i)$ vanishes on $\mathfrak c$ for $i=1,2,3$, we have, for $\gamma_1,\gamma_2\in \mathfrak c$
$$\langle[\gamma_1,\gamma_2],\delta_i\rangle = -\langle\gamma_2,[\gamma_1,\delta_i]\rangle =\langle \gamma_2 ,0\rangle =0.$$ 

(5) This follows from (4), and the fact that $\epsilon_i \in [\cf,\cf]$.
\end{proof}
We are now in a position to check the finiteness of the Bielawski metric, again
following \cite{DKS-Sesh}.

\begin{proposition}\label{metricfinite}
If $T\in \mathcal A_{C}$ and $X \in T_T\mathcal A_{C}$, then
$$\Vert X\Vert_{B,b}^2 = b\sum_{i=0}^3 \langle X_i(\infty), X_i(\infty)\rangle + \int_0^\infty \sum_{i=0}^3(\langle X_i(t), X_i(t)\rangle - \langle X_i(\infty), X_i(\infty)\rangle)dt <\infty.$$
Moreover, the Bielawski metric is non-degenerate on $T_T\mathcal A_C$.

Thus, we may interpret $(\mathcal A_{C}, \Vert\cdot\Vert_{B,b})$ as an 
infinite-dimensional space with a translation-invariant non-degenerate
symmetric bilinear form..
\end{proposition}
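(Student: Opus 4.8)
The plan is to prove the two assertions by separate and essentially independent arguments: finiteness is a matter of decay estimates in which exactly one potentially divergent term survives, and that term is annihilated by the algebraic relations of Lemma~\ref{algrel}; non-degeneracy is then a routine variational argument using compactly supported test directions. Throughout I write a tangent vector as $X = X_{\delta,\epsilon} + \tilde X$ with $\tilde X \in (\Omega_\zeta(\cf)\oplus\Omega_{\exp}(\cf^\perp))^4$, so that $X_0(\infty)=0$ while $X_i(\infty) = \delta_i$ and $X_i = \delta_i + \tfrac{\epsilon_i}{2(t+1)} + \tilde X_i$ for $i=1,2,3$.

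For finiteness I would first dispose of the $i=0$ summand of the integrand, which is simply $|X_0(t)|^2$: its $\cf$-part decays like $(1+t)^{-2(1+\zeta)}$ and its $\cf^\perp$-part exponentially, so it is integrable. For $i\geq 1$ I would expand
$$\langle X_i(t),X_i(t)\rangle - \langle\delta_i,\delta_i\rangle = \frac{|\epsilon_i|^2}{4(1+t)^2} + |\tilde X_i(t)|^2 + \frac{\langle\delta_i,\epsilon_i\rangle}{1+t} + 2\langle\delta_i,\tilde X_i(t)\rangle + \frac{\langle\epsilon_i,\tilde X_i(t)\rangle}{1+t}.$$
Of the five terms, the first two and the last two are manifestly absolutely integrable on $[0,\infty)$, since $\tilde X_i$ decays at least like $(1+t)^{-(1+\zeta)}$. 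The only dangerous contribution is $\langle\delta_i,\epsilon_i\rangle/(1+t)$, which decays merely like $(1+t)^{-1}$ and would give a logarithmic divergence. This is precisely where Lemma~\ref{algrel} is needed: part~(5) gives $\langle\delta_i,\epsilon_i\rangle = 0$, so the term vanishes identically. Combined with the finiteness of the boundary term $b\sum_{i=1}^3\langle\delta_i,\delta_i\rangle$, this yields $||X||_{B,b}^2 < \infty$ and shows that the integral in fact converges absolutely.

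For non-degeneracy I would polarise to the associated symmetric bilinear form $B(X,Y)$, which depends only on $X$ and $Y$ and not on the base point $T$ (this is the asserted translation invariance), and test $X$ against purely decaying directions. Any $\tilde Y \in (\Omega_\zeta(\cf)\oplus\Omega_{\exp}(\cf^\perp))^4$ is a genuine tangent vector (take $\delta=\epsilon=0$) and satisfies $\tilde Y_i(\infty)=0$, so the boundary term and the subtracted constants both drop out, leaving $B(X,\tilde Y) = \int_0^\infty\sum_{i=0}^3\langle X_i(t),\tilde Y_i(t)\rangle\,dt$; this converges absolutely because $X_i$ is bounded and $\tilde Y_i$ decays. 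Every compactly supported $C^1$ quadruple, decomposed into its $\cf$- and $\cf^\perp$-components, is such a $\tilde Y$; testing the $\cf$- and $\cf^\perp$-parts separately and applying the fundamental lemma of the calculus of variations (using continuity of the $X_i$) forces $X_i(t)=0$ for all $t$ and all $i$, i.e. $X=0$. Hence $B$ is non-degenerate for every value of $b$; note that this argument makes no use of positivity, consistent with the fact that the form need not be positive definite.

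I expect the only genuine obstacle to be the bookkeeping in the finiteness step: one has to expand the integrand carefully, match each resulting product against the correct decay rate, and pin down that the single non-integrable term is exactly the one removed by Lemma~\ref{algrel}(5). The non-degeneracy argument and all the decay estimates are routine once the expansion is in hand.
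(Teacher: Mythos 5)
Your proof is correct and follows essentially the same route as the paper: the finiteness argument is the same expansion of the integrand in which the only non-integrable term is $\langle\delta_i,\epsilon_i\rangle/(1+t)$, killed by Lemma \ref{algrel}(5), and the non-degeneracy argument is the same compactly-supported test-direction (bump function) argument, which the paper carries out by taking $Y_i = hX_i$ directly rather than invoking the fundamental lemma of the calculus of variations. No gaps.
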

\begin{proof}
Given $X \in T_T\mathcal A_{C}$, we know that there exist $\epsilon_i$ and $\delta_i$ as in the above discussion such that
$$X_0 = \mathrm O((1+t)^{-(1+\zeta)})$$
and
$$X_i = \delta_i +\frac{\epsilon_i}{2(t+1)} + \mathrm O((1+t)^{-(1+\zeta)}),\quad i=1,2,3.$$
The term involving $b$ and $X_i(\infty)$ is therefore always finite. So we may assume $b=0$ and it is enough to prove the proposition for $\Vert\cdot\Vert_B := \Vert\cdot\Vert_{B,0}$.
Then 
\begin{eqnarray*}
\Vert X\Vert_B^2 &=& \int_0^\infty \sum_{i=0}^3(\langle X_i(t), X_i(t)\rangle - \langle X_i(\infty), X_i(\infty)\rangle)dt \\
&=& \int_0^\infty \sum_{i=0}^3(\langle X_i(t), X_i(t)\rangle - \langle \delta_i, \delta_i\rangle)dt \\
 &=& \int_0^\infty \left( \langle \delta_i +\frac{\epsilon_i}{2(t+1)}, \delta_i +\frac{\epsilon_i}{2(t+1)}\rangle - \langle \delta_i, \delta_i\rangle + \mathrm O((1+t)^{-(1+\zeta)})\right)dt\\
&=& \int_0^\infty \left(\frac{\langle\delta_i,\epsilon_i\rangle}{t+1} + \frac{\langle\epsilon_i,\epsilon_i\rangle}{4(t+1)^2} + \mathrm O((1+t)^{-(1+\zeta)})\right)dt\\
&=& \int_0^\infty \left(\frac{\langle\epsilon_i,\epsilon_i\rangle}{4(t+1)^2} + \mathrm O((1+t)^{-(1+\zeta)})\right)dt,
\end{eqnarray*} 
which is finite. In the last step we have used Lemma \ref{algrel}, in particular
the result (5) that $\langle\delta_i,\epsilon_i\rangle = 0$.

To see that the Bielawski metric is non-degenerate, we observe that any quadruple of $\kf$-valued $C^1$-functions on $[0,\infty)$ with compact support defines a tangent vector. Hence, we may use bump functions to argue by contradiction as usual: Suppose $X\in T_T\mathcal A_C$ satisfies 
$$\langle X,Y\rangle_{B,b} = 0,\quad \forall Y\in T_T\mathcal A_C.$$
If $X$ does not vanish, then there exists a $t_0\in[0,\infty)$ such that $X_i(t_0) \neq 0$ for some $i\in \{0,1,2,3\}$. By continuity, there exists a $c>0$ such that  $X_i(t)\neq 0$ for all $t$ such that $|t-t_0|< c$. Now pick a non-negative bump function $h$ with compact support in $(t_0-c, t_0+c)$ such that $h(t_0) =1$ and consider the tangent vector $Y$ whose only non-zero component is given by $Y_i(t) = h(t)X_i(t)$. Then, since $Y(\infty) =0$, we get
$$0 = \langle X,Y\rangle_{B,b} = \int_{t_0-c}^{t_0+c} h(t)\langle X_i(t), X_i(t)\rangle dt >0,$$
a contradiction.
\end{proof}

We have thus defined on $\mathcal A_{C}$ a non-degenerate translation-invariant symmetric bilinear form. We shall see in the following two examples that it is indefinite, so that, formally, we may think of $\mathcal A_C$ as a flat infinite-dimensional pseudo-Riemannian manifold.

\begin{example} 
Put $b=0$ and take a model solution  $T_{\tau,\sigma} = \left(0,\left(\tau_i+\frac{\sigma_i}{2(t+1)}\right)_{i=1}^3\right)$, as well as $\delta_i \in Z(\mathfrak c)$ for $i=0,1,2,3$ and consider for sufficiently small $\theta$ the family
$$T_{\tau,\sigma}^\theta =  \left(0,\left(\tau_i +\theta\delta_i+\frac{\sigma_i}{2(t+1)}\right)_{i=1}^3 \right).$$
Then the tangent vector 
$$X = \frac{d}{d\theta}|_{\theta = 0} T_{\tau, \sigma}^\theta = (0,\delta_1,\delta_2,\delta_3)$$
satisfies
$$\Vert X\Vert_{B,0}^2 = \int_0^\infty \sum_{i=1}^3\left( \langle X_i(t)X_i(t)\rangle - \langle\delta_i,\delta_i\rangle\right)dt =  \int_0^\infty \sum_{i=1}^3\left( \langle \delta_i,\delta_i\rangle - \langle\delta_i,\delta_i\rangle\right)dt = 0.$$

Thus, tangent directions corresponding to deforming the limiting triple $\tau$ give rise to examples of null vectors in $(T\mathcal A_{C},\Vert\cdot\Vert_{B,0})$, i.e. if we fix the parameter $b$ in the definition of the Bielawski bilinear form to be zero.
\end{example}
\begin{example}
For $b>0$ we may consider for $\eta>0$ and $\delta_1\in Z(\cf)$ the following tangent vector: $X = (0, (1-e^{-\eta t})\delta_1, 0,0)$. Then writing $|\delta_1|^2 = \langle \delta_1, \delta_1\rangle$, we find
$$\Vert X\Vert_{B,b}^2 = b|\delta_1|^2 + \int_0^\infty \left( (1-e^{-\eta t})^2|\delta_1|^2 - |\delta_1|^2\right)dt =  |\delta_1|^2 \left(b+ \int_0^\infty (e^{-2\eta t} - 2e^{-\eta t})dt\right) =  |\delta_1|^2 \left(b-\frac{3}{2\eta}\right),$$
which is \emph{negative} if we choose $\eta<\frac{3}{2b}$ and equal to zero if $\eta=\frac{3}{2b}$.
\end{example} 
It is useful to decompose the Lie algebra $\mathfrak k$ into orthogonal subspaces as follows
$$\mathfrak k = Z(\cf) \oplus [\cf,\cf] \oplus \cf^\perp.$$
We write $\xi\in\kf$ as 
$$\xi= \xi^{D,0} + \xi^{D,1}+\xi^H,$$
with respect to this decomposition. Recall that for tangent vectors we have $X_i(\infty) = \delta_i\in Z(\cf)$, i.e. 
$$X_i(\infty) = X_i^{D,0}(\infty).$$
By definition, then, the Bielawski metric can be written as follows
\begin{eqnarray*}
\Vert X\Vert_{B,b}^2 &=& b\sum_{i=0}^3 \langle X_i(\infty), X_i(\infty)\rangle + \int_0^\infty \sum_{i=0}^3(\langle X_i(t), X_i(t)\rangle - \langle X_i(\infty), X_i(\infty)\rangle)dt\\
&=&  b\sum_{i=0}^3 \langle \delta_i, \delta_i\rangle + \int_0^\infty \sum_{i=0}^3(\langle X_i^H(t), X_i^H(t)\rangle +\langle X_i^{D,1}(t), X_i^{D,1}(t)\rangle+\langle X_i^{D,0}(t), X_i^{D,0}(t)\rangle - \langle \delta_i, \delta_i\rangle)dt\\
&=& \Vert X^H\Vert_{L^2}^2 + \Vert X^{D,1}\Vert_{L^2}^2 + \Vert X^{D,0}\Vert_{B,b}^2.
\end{eqnarray*}
Thus, it is the $Z(\cf)$-component of a tangent vector which we have to consider in order to study definiteness properties of the Bielawski metric. 

\medskip
On the other hand, for any element $T \in \mathcal A_{C}$ and any $b\in\mathbb R$, the metric $\Vert\cdot\Vert_{B,b}$ is certainly positive definite on the space of tangent vectors which vanish at infinity, i.e. $X$ such that $\delta_i=0$. In fact, for such vectors the metric $\Vert\cdot \Vert_{B,b}$ reduces to the usual $L^2$-metric.  In particular, if we restrict to a subvariety where the triple $\tau$ is fixed, then
$\Vert \cdot \Vert_{B,b}$ reduces to the $L^2$ metric for any choice of the parameter $b$. The process of obtaining Kostant varieties by torus reduction will
involve fixing $\tau$ in this way.

\subsection{The Gauge Group}
Let $G\subset C \subset K$ be a Lie subgroup with Lie algebra $\mathfrak g$. On the stratum $\mathcal A_C$ we shall describe
an action of the gauge
group $\mathcal G_{G}$, 
\index{$\mathcal G_{G}$} 
whose Lie algebra $\mathrm{Lie}(\mathcal G_{G})$ 
\index{$\mathrm{Lie}(\mathcal G_{G})$} 
is given by the space of $C^2$ paths $\xi:[0,\infty) \to \mathfrak k$ such that
\begin{itemize}
\item $\xi(0) = 0$, 
\item $\xi$ has a limit $\xi(\infty) \in \gf$
\item $\dot \xi \in \Omega_\zeta(\cf)\oplus \Omega_{\exp}(\cf^\perp).$
\end{itemize}
The cases which will be of most interest to us are when $G$ equals one of $C,[C,C],C\cap C(\sigma),\{1\}$.
\begin{lemma} \label{fourth}
For $\xi$ in $\mathrm{Lie}(\mathcal G_{G})$, we have
 $[\tau+\frac{\sigma}{2(t+1)},\xi(\infty)-\xi(t)^D] \in \Omega_{\zeta}(\cf)$ 
and $[\tau+\frac{\sigma}{2(t+1)},\xi(t)^H] \in \Omega_{\exp}(\cf^\perp)$ 
for all $\tau\in \tf$ and $\sigma\in [\mathfrak c,\mathfrak c].$
\end{lemma}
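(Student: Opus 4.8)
The plan is to read off both memberships from decay estimates for the two components of $\xi$, using that $\tau+\frac{\sigma}{2(t+1)}$ is a bounded $\cf$-valued function and that $[\cf,\cf^\perp]\subseteq\cf^\perp$. Write $\xi=\xi^D+\xi^H$ for the decomposition along $\kf=\cf\oplus\cf^\perp$; since this projection is $t$-independent it commutes with $d/dt$, so the hypothesis $\dot\xi\in\Omega_\zeta(\cf)\oplus\Omega_{\exp}(\cf^\perp)$ says exactly that $\dot{\xi^D}\in\Omega_\zeta(\cf)$ and $\dot{\xi^H}\in\Omega_{\exp}(\cf^\perp)$. As $\xi(\infty)\in\gf\subseteq\cf$ we have $\xi^H(\infty)=0$ and $\xi^D(\infty)=\xi(\infty)$. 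Integrating $\dot{\xi^H}$ from $t$ to $\infty$ gives $\xi^H\in\Omega_{\exp}(\cf^\perp)$, while integrating $\dot{\xi^D}$ shows that $f:=\xi(\infty)-\xi^D$ is $\cf$-valued with $f(\infty)=0$, $\dot f=-\dot{\xi^D}\in\Omega_\zeta(\cf)$, and pointwise $|f(t)|=\mathrm O((1+t)^{-\zeta})$. The feature to keep in mind is that this bound on $f$ is one power of $(1+t)$ weaker than membership in $\Omega_\zeta(\cf)$ demands, since $f$ is obtained by integrating an $\Omega_\zeta$-function from $t$ to $\infty$.

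For the $\cf^\perp$-statement I would simply bracket the exponentially decaying $\xi^H$ against the bounded factor $\tau+\frac{\sigma}{2(t+1)}$. The inclusion $[\cf,\cf^\perp]\subseteq\cf^\perp$ (which holds because $\cf^\perp$ is $\mathrm{ad}(\cf)$-invariant for the bi-invariant metric) shows the result is $\cf^\perp$-valued, and $|[\tau+\frac{\sigma}{2(t+1)},\xi^H]|\le 2(|\tau|+\tfrac12|\sigma|)\,|\xi^H|=\mathrm O(e^{-\eta t})$. Differentiating yields $[-\frac{\sigma}{2(t+1)^2},\xi^H]+[\tau+\frac{\sigma}{2(t+1)},\dot{\xi^H}]$, again a bounded factor times something exponentially small, hence $\mathrm O(e^{-\eta t})$; so the expression lies in $\Omega_{\exp;\eta}(\cf^\perp)\subseteq\Omega_{\exp}(\cf^\perp)$. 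No cancellation is needed here, because exponential decay is stable under polynomially bounded prefactors.

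The $\cf$-statement is the substantive one, precisely because $f$ decays only like $(1+t)^{-\zeta}$. I would split $[\tau+\frac{\sigma}{2(t+1)},f]=[\tau,f]+\frac{1}{2(t+1)}[\sigma,f]$. The second summand is controlled by the explicit $(1+t)^{-1}$: since $|[\sigma,f]|=\mathrm O((1+t)^{-\zeta})$ it is $\mathrm O((1+t)^{-(1+\zeta)})$, while its derivative $-\frac{1}{2(t+1)^2}[\sigma,f]+\frac{1}{2(t+1)}[\sigma,\dot f]$ is $\mathrm O((1+t)^{-(2+\zeta)})$; as $\sigma,f\in\cf$ this summand lies in $\Omega_\zeta(\cf)$. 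The summand $[\tau,f]$ inherits only the $(1+t)^{-\zeta}$ decay of $f$ and so would fail to lie in $\Omega_\zeta(\cf)$ were it nonzero; the point is that it vanishes identically. Indeed $f$ is $\cf$-valued and $\tau$ centralises $\cf$ — here $\tau$ is a component of the commuting triple with centraliser $C$, so $[\tau,\cf]=0$ and in particular $\tau\in Z(\cf)$ — whence $[\tau,f]\equiv0$ and the claim follows.

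The main obstacle is exactly this dichotomy. Unlike the robustly exponential $H$-part, the $D$-part $f$ decays too slowly by one power to survive on its own inside $\Omega_\zeta(\cf)$; what rescues it is the structure of the bracket with the model solution, namely that the $\sigma$-term supplies the missing factor $(1+t)^{-1}$ and the $\tau$-term is annihilated by $[\tau,\cf]=0$. I would therefore make the identity $[\tau,\cf]=0$ the load-bearing step. Note that this is where the hypothesis on $\tau$ is genuinely used: for a $\tau\in\tf$ with $[\tau,\cf]\neq0$ the term $[\tau,f]$ does not decay fast enough, so the $\cf$-part of the conclusion should be read with $\tau\in Z(\cf)$, which is the situation of the model data $T_{\tau,\sigma}$ defining $\mathcal A_C$.
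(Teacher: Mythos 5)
Your proof is correct and follows essentially the same route as the paper's: integrate $\dot\xi$ to get $\xi(\infty)-\xi^D=\mathrm O((1+t)^{-\zeta})$ and $\xi^H\in\Omega_{\exp}(\cf^\perp)$, kill the $[\tau,\cdot]$ term using $[\tau,\cf]=0$, and let the explicit $(1+t)^{-1}$ in the $\sigma$-term supply the missing power of decay. Your closing remark is also well taken: the proof genuinely requires $\tau\in Z(\cf)$ rather than arbitrary $\tau\in\tf$, which is the situation in every application of the lemma (the limiting triples satisfy $C(\tau)=C$, hence $\tau_i\in Z(\cf)$), though the paper's statement does not make this explicit.
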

\begin{proof}
Let $\xi\in \mathrm{Lie}(\mathcal G_{G})$, so $\xi(0) = 0$ and $\xi$ has a limit in the Lie algebra $\gf\subset\cf$, i.e. $\xi(\infty) = \xi(\infty)^D$. Moreover,  we have  $\dot{\xi}^D\in\Omega_\zeta(\cf)$  as well as $\dot{\xi}^H\in\Omega_{\exp}(\cf^\perp)$. 

We check that the two terms in the lemma have the required asymptotic behaviour. By writing 
$$\xi(\infty) - \xi(t) = \int_t^\infty\dot{\xi}(s)ds,$$
and using $\dot{\xi}^D\in\Omega_\zeta(\cf)$, $\dot{\xi}^H\in\Omega_{\exp}(\cf^\perp)$, we see that  $$\xi(\infty) - \xi^D = \mathrm O((1+t)^{-\zeta})\quad \text{and}\quad \xi^H \in \Omega_{\exp}(\cf^\perp).$$ 
Then since $\tau$ acts trivially on $\cf$, i.e. commutes with $\xi^D$, we find 
$$[\tau + \frac{\sigma}{2(t+1)},\xi(\infty)-\xi(t)^D] = [\frac{\sigma}{2(t+1)},\xi(\infty)-\xi(t)^D] = \mathrm O((1+t)^{-(1+\zeta)})$$
and 
\begin{eqnarray*}
\frac{d}{dt}[\tau + \frac{\sigma}{2(t+1)},\xi(\infty)-\xi(t)^D] &=& \frac{d}{dt} [\frac{\sigma}{2(t+1)},\xi(\infty)-\xi(t)^D] \\
&=& -[\frac{\sigma}{2(t+1)^2},\xi(\infty)-\xi(t)^D] - [\frac{\sigma}{2(t+1)},\dot{\xi}(t)^D] \\
&=& \mathrm O((1+t)^{-(2+\zeta)}).
\end{eqnarray*}
For the $\cf^\perp$-component, we calculate similarly
$$[\tau + \frac{\sigma}{2(t+1)},\xi(t)^H] = O(e^{-\eta t})$$
and 
$$\frac{d}{dt}[\tau + \frac{\sigma}{2(t+1)},\xi(t)^H] = -[\frac{\sigma}{2(t+1)^2},\xi(t)^H] + [\tau + \frac{\sigma}{2(t+1)},\dot{\xi}(t)^H] = \mathrm O(e^{-\eta t})$$
for some $\eta >0$.
\end{proof}

\begin{lemma} 
$\mathrm{Lie}(\mathcal G_{G})$ is a Lie algebra.
\end{lemma}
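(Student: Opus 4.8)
\subsection*{Proof proposal}

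The plan is to verify that $\mathrm{Lie}(\mathcal G_G)$ is a real vector subspace of $C^2([0,\infty),\kf)$ which is closed under the pointwise Lie bracket $[\xi,\psi](t):=[\xi(t),\psi(t)]$. That it is a vector subspace is immediate: each of the three defining conditions is linear, the limit map $\xi\mapsto\xi(\infty)$ is linear with values in the subspace $\gf$, and $\Omega_\zeta(\cf)$ and $\Omega_{\exp}(\cf^\perp)$ are themselves vector spaces. So the only real content is closure under the bracket, for which I check the three conditions of the definition on $\eta:=[\xi,\psi]$, with $\xi,\psi\in\mathrm{Lie}(\mathcal G_G)$.

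The first two conditions are straightforward. Since $\xi,\psi$ are $C^2$, so is $\eta$. We have $\eta(0)=[\xi(0),\psi(0)]=0$, and by continuity of the bracket $\eta(t)\to[\xi(\infty),\psi(\infty)]$ as $t\to\infty$; this limit lies in $\gf$ because $\gf=\mathrm{Lie}(G)$ is a Lie subalgebra and $\xi(\infty),\psi(\infty)\in\gf$.

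The substance is the third condition, $\dot\eta\in\Omega_\zeta(\cf)\oplus\Omega_{\exp}(\cf^\perp)$. I would compute $\dot\eta=[\dot\xi,\psi]+[\xi,\dot\psi]$ and exploit that each summand contains a differentiated, hence decaying, factor, while the undifferentiated factor is merely bounded, converging to a constant in $\gf\subset\cf$. Writing $\psi=\psi(\infty)+(\psi-\psi(\infty))$ and using the estimates from the proof of Lemma \ref{fourth}, namely $\psi^D-\psi(\infty)=\mathrm O((1+t)^{-\zeta})$ and $\psi^H\in\Omega_{\exp}(\cf^\perp)$ (and likewise for $\xi$), I split $[\dot\xi,\psi]$ into the adjoint-action term $[\dot\xi,\psi(\infty)]$ and a genuinely bilinear decaying term $[\dot\xi,\psi-\psi(\infty)]$. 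For the former, the lemma asserting that $\kf$ acts by the adjoint action on $\Omega_\zeta$ and $\Omega_{\exp}$, combined with the invariance relations $[\cf,\cf]\subseteq\cf$ and $[\cf,\cf^\perp]\subseteq\cf^\perp$, keeps the term in $\Omega_\zeta(\cf)\oplus\Omega_{\exp}(\cf^\perp)$. For the latter, bracketing two decaying factors yields faster decay, and I track the component into which each piece falls using the same invariance relations; the term $[\xi,\dot\psi]$ is handled symmetrically.

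The main point requiring care, and the only genuine obstacle, is the pair of cross terms lying in $[\cf^\perp,\cf^\perp]$, which need not be orthogonal to $\cf$ and so can contribute to the $\cf$-component of $\dot\eta$. These terms, however, involve two factors from $\Omega_{\exp}(\cf^\perp)$ and are therefore exponentially small; since exponential decay dominates any polynomial rate, such contributions are absorbed into $\Omega_\zeta(\cf)$ on the $D$-side and into $\Omega_{\exp}(\cf^\perp)$ on the $H$-side. Membership in $\Omega_\zeta$ also demands control of the first derivative of $\dot\eta$, that is of $\ddot\eta=[\ddot\xi,\psi]+2[\dot\xi,\dot\psi]+[\xi,\ddot\psi]$; here the $\Omega_\zeta$ norm supplies one extra power of $(1+t)^{-1}$ on each twice-differentiated factor, and the identical bookkeeping, invariance of the decomposition together with exponential-beats-polynomial, yields the required $\mathrm O((1+t)^{-(2+\zeta)})$ and $\mathrm O(e^{-\eta t})$ bounds. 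Assembling these estimates gives $\dot\eta\in\Omega_\zeta(\cf)\oplus\Omega_{\exp}(\cf^\perp)$, which completes the verification that $\eta\in\mathrm{Lie}(\mathcal G_G)$ and hence that $\mathrm{Lie}(\mathcal G_G)$ is a Lie algebra.
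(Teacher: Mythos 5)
Your proposal is correct and follows essentially the same route as the paper: the only content is closure of the third condition under the bracket, which both you and the authors handle by decomposing $\frac{d}{dt}[\xi,\psi]$ according to $\kf=\cf\oplus\cf^\perp$, using $[\cf,\cf]\subseteq\cf$ and $[\cf,\cf^\perp]\subseteq\cf^\perp$, absorbing the exponentially small $[\cf^\perp,\cf^\perp]$ contributions into $\Omega_{\exp}(\cf)\subset\Omega_\zeta(\cf)$ and $\Omega_{\exp}(\cf^\perp)$, and controlling the remaining terms by the product estimates of Lemma \ref{fg}. Your extra splitting $\psi=\psi(\infty)+(\psi-\psi(\infty))$ is harmless but unnecessary, since Lemma \ref{fg} only requires the undifferentiated factor to be bounded with suitably decaying derivative.
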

\begin{proof}
We only have to check that $\mathrm{Lie}(\mathcal G_{G})$ is closed under the Lie bracket.
Let $\xi,\psi \in \mathrm{Lie}(\mathcal G_{G})$. 

Clearly, the first and second conditions hold for $[\xi, \psi]$.

To check the third condition, we note that $\dot{\xi},\dot{\psi} = \mathrm O((1+t)^{-(1+\zeta)})$. We have to consider the term  $\frac{d}{dt} [\xi, \psi]$, which we decompose according to the splitting $\kf=\cf\oplus\cf^\perp$
(note that $\frac{d}{dt} (\xi^D) = (\frac{d \xi}{dt})^D$ and similarly for the 
$H$ superscript):
\begin{eqnarray*}
\frac{d}{ dt} [\xi, \psi] &=& [\dot{\xi}^D +\dot{\xi}^H,\psi^D+\psi^H] + [\xi^D+\xi^H,\dot{\psi}^D+\dot{\psi}^H]\\
&=&[\dot{\xi}^D,\psi^D] + [\xi^D,\dot{\psi}^D] + ([\dot{\xi}^H,\psi^H] + [\xi^H,\dot{\psi}^H])^D \\
& & \oplus  [\dot{\xi}^H,\psi^D] + [\dot{\xi}^D,\psi^H] + [\xi^D,\dot{\psi}^H]  + [\xi^H,\dot{\psi}^D] + ([\dot{\xi}^H,\psi^H] + [\xi^H,\dot{\psi}^H])^H.
\end{eqnarray*}
The terms $([\dot{\xi}^H,\psi^H] + [\xi^H,\dot{\psi}^H])^D$ and $([\dot{\xi}^H,\psi^H] + [\xi^H,\dot{\psi}^H])^H$ clearly lie in $\Omega_{\exp}(\cf)\subset\Omega_\zeta(\cf)$ and $\Omega_{\exp}(\cf^\perp)$ respectively.  The remaining terms lie in $\Omega_\zeta(\cf)\oplus\Omega_{\exp}(\cf^\perp)$ as is implied by the following lemma.

\begin{lemma} \label{fg}
\begin{enumerate}
\item
Let $\zeta>0$ and let $f:[0,\infty)\to\mathfrak c$ be bounded with $\dot f  = \mathrm O((1+t)^{-1})$ and let $g\in\Omega_\zeta(\cf)$. Then the bracket $[f,g]$ lies in $\Omega_\zeta(\cf)$.
\item Let $\zeta >0$ and let $f:[0,\infty)\to\mathfrak c$ be such that $f$ and $\dot f$ are bounded. Let $g\in\Omega_{\exp;\eta}(\cf^\perp)$, then the bracket $[f,g]$ lies in $\Omega_{\exp;\eta}(\cf^\perp)$. In particular, if $g\in\Omega_{\exp}(\cf^\perp)$, then $[f,g]\in\Omega_{\exp}(\cf^\perp)$.
\end{enumerate}
\end{lemma}

\begin{proof}
(1) The assumptions on $f$ and $g$ imply that there exist positive constants $A_1,A_2$ and $A_3$ such that 
$$|g(t)|\leq \frac{A_1}{(1+t)^{1+\zeta}}, \qquad
|\dot g(t)|\leq  \frac{A_2}{(1+t)^{2+\zeta}}$$ 
as well as 
$$|\dot f(t)|\leq  \frac{A_3}{(1+t)}.$$ 
Thus,  $$|[f(t),g(t)]| \leq 2|f(t)||g(t)|\leq  \frac{2A_1\sup(|f|)}{(1+t)^{1+\zeta}}$$ and $$\left|\frac{d}{dt}[f(t),g(t)]\right| \leq |[\dot f(t), g(t)]|+|[f,\dot g(t)]| \leq  \frac{2A_1A_3}{(1+t)^{2+\zeta}}+ \frac{2A_2\sup (|f|)}{(1+t)^{2+\zeta}}.$$ 
(2) This is very similar to (1).  By assumption there exists $\eta>0$ and positive constants $A_1$ and $A_2$ such that 
$$|g(t)|\leq A_1e^{-\eta t} \;\;\; : \;\;\; 
|\dot g(t)|\leq A_2e^{-\eta t}.$$ 
Thus,  
$$|[f(t),g(t)]| \leq 2|f(t)||g(t)|\leq  2A_1\sup(|f|)e^{-\eta t}$$
and
$$\left|\frac{d}{dt}[f(t),g(t)]\right| \leq |[\dot f(t), g(t)]|+|[f,\dot g(t)]| \leq  \left(2A_1\sup(|\dot f|)+ 2A_2\sup (|f|)\right)e^{-\eta t}.$$ 
\end{proof}
Altogether we have verified that $[\xi,\psi]\in\mathrm{Lie}(\mathcal G_{G})$, which is therefore indeed a Lie algebra.
\end{proof}

Note that we have actually proved a slightly stronger statement, namely that the space $\mathrm{Lie}(\mathcal G_{G;\eta})$ 
\index{$\mathrm{Lie}(\mathcal G_{G;\eta})$}
obtained by replacing $\Omega_{\exp}$ by $\Omega_{\exp;\eta}$, i.e. fixing the exponential decay rate,  in the definition of $\mathrm{Lie}(\mathcal G_{G})$, is a Lie algebra. 

The space $\mathrm{Lie}(\mathcal G_{G})$ is the Lie algebra of the gauge group $\mathcal G_G$ given by the space of $K$-valued $C^2$-paths
$u:[0,\infty) \to K$ such that 
\begin{itemize}
\item $u(0) = 1,$ 
\item $ u(\infty) \in G\subset [C,C],$
\item $(\dot u u^{-1})^D \in \Omega_\zeta(\cf)$ and $(\dot u u^{-1})^H \in \Omega_{\exp}(\cf^\perp),$
\end{itemize}
It acts on Nahm matrices by the usual gauge action
\begin{equation}\label{gaugeaction}
u.T_0 = \mathrm{Ad}(u)(T_0) - \dot uu^{-1},\qquad u.T_i = \mathrm{Ad}(u)(T_i)\quad i =1,2,3.
\end{equation}

\begin{lemma}\label{uinfty}
Modulo terms in $\Omega_\zeta(\cf) \oplus \Omega_{\exp}(\cf^\perp)$, $u\in\mathcal G_G$ acts on $\tau + \frac{\sigma}{2(t+1)}$
like $u(\infty) \in G \subset [C,C]$. That is, 
$$(\mathrm{Ad}(u)-\mathrm{Ad}(u(\infty)))\left(\tau + \frac{\sigma}{2(t+1)}\right) \in \Omega_\zeta(\cf)\oplus\Omega_{\exp;\eta}(\cf^\perp)$$Note that this preserves the
condition $C(\tau)=C$ (indeed it fixes $\tau$) and the condition
$\sigma \subset \cf$.
\end{lemma}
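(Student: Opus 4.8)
The plan is to compare $\mathrm{Ad}(u)$ not with a general constant but with the identity, and then to exploit a block structure of the resulting operator with respect to the splitting $\kf = \cf\oplus\cf^\perp$. First I would write $\mathrm{Ad}(u) = \mathrm{Ad}(u(\infty))\,\mathrm{Ad}(v)$ with $v := u(\infty)^{-1}u$, so that $v(\infty)=1$ and $\mu := \dot v v^{-1} = \mathrm{Ad}(u(\infty)^{-1})(\dot u u^{-1})$ inherits the decay built into the definition of $\mathcal G_G$, namely $\mu^D\in\Omega_\zeta(\cf)$ (so $\mu^D = \mathrm{O}((1+t)^{-(1+\zeta)})$, in particular integrable on $[0,\infty)$) and $\mu^H\in\Omega_{\exp}(\cf^\perp)$. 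Since $u(\infty)\in[C,C]\subset C$, the fixed linear map $\mathrm{Ad}(u(\infty))$ preserves the orthogonal splitting $\kf=\cf\oplus\cf^\perp$, preserves each of $\Omega_\zeta(\cf)$ and $\Omega_{\exp;\eta}(\cf^\perp)$, and fixes $\tau$ (because $\tau_i\in Z(\cf)$, being centralised by all of $C$). Thus it suffices to prove $(\mathrm{Ad}(v)-\mathrm{Id})Y\in\Omega_\zeta(\cf)\oplus\Omega_{\exp;\eta}(\cf^\perp)$ for the $\cf$-valued function $Y := \tau+\frac{\sigma}{2(t+1)}$ (recall $\tau\in\tf\subset\cf$ and $\sigma\in[\cf,\cf]\subset\cf$).

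The analytic engine is a variation-of-parameters estimate. The operator $\mathrm{Ad}(v)$ solves $\frac{d}{dt}\mathrm{Ad}(v)=\mathrm{ad}(\mu)\,\mathrm{Ad}(v)$ with $\mathrm{Ad}(v)(\infty)=\mathrm{Id}$, and with respect to $\cf\oplus\cf^\perp$ the operator $\mathrm{ad}(\mu^D)$ is block-diagonal (it preserves $\cf$ and $\cf^\perp$) and integrable, whereas the coupling $\cf\to\cf^\perp$ is produced solely by $\mathrm{ad}(\mu^H)$, which is exponentially small. I will use the following elementary fact: a linear system $\dot S = PS+f$ on $[0,\infty)$ with $S(\infty)=0$, with $P$ integrable (so $\Vert P\Vert$ is bounded and the fundamental solution $\Phi$ and its inverse are uniformly bounded) and with exponentially decaying forcing $f$, has, via $S(t) = -\Phi(t)\int_t^\infty \Phi(s)^{-1}f(s)\,ds$, an exponentially decaying solution $S$ with $\dot S$ exponentially decaying as well.

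I would then treat the two components separately. For the $\cf^\perp$-component, since $Y\in\cf$ the quantity $(\mathrm{Ad}(v)Y)^H$ is governed by the off-diagonal block of $\mathrm{Ad}(v)$, whose defining ODE has the form $\dot{(\;)} = (\text{integrable})(\;) + (\text{exponential})$ with zero limit at infinity; hence $(\mathrm{Ad}(v)Y-Y)^H=(\mathrm{Ad}(v)Y)^H\in\Omega_{\exp;\eta}(\cf^\perp)$. For the $\cf$-component I split $(\mathrm{Ad}(v)-\mathrm{Id})Y = (\mathrm{Ad}(v)\tau-\tau) + (\mathrm{Ad}(v)-\mathrm{Id})\tfrac{\sigma}{2(t+1)}$. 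The decisive input is $\tau\in Z(\cf)$: writing $R:=\mathrm{Ad}(v)\tau$ and differentiating gives $\dot{(R^D-\tau)}=[\mu^D,R^D-\tau]+[\mu^H,R^H]^D$, since $[\mu^D,\tau]=0$; as $R^H$ is exponential by the previous step and $\mu^D$ is integrable, the same variation-of-parameters argument forces $R^D-\tau=(\mathrm{Ad}(v)\tau-\tau)^D$ to be exponential, a fortiori in $\Omega_\zeta(\cf)$, and its $\cf^\perp$-part is exponential as above. Finally, integrating $\dot A=(\text{integrable})A+(\text{exp})$ with $A(\infty)=\mathrm{Id}$ gives the crude bound $\Vert\mathrm{Ad}(v)-\mathrm{Id}\Vert=\mathrm{O}((1+t)^{-\zeta})$, so the extra factor $\tfrac{1}{2(t+1)}$ renders $(\mathrm{Ad}(v)-\mathrm{Id})\tfrac{\sigma}{2(t+1)}$ of size $\mathrm{O}((1+t)^{-(1+\zeta)})$ with derivative $\mathrm{O}((1+t)^{-(2+\zeta)})$; its $\cf$-part therefore lies in $\Omega_\zeta(\cf)$, while its $\cf^\perp$-part is the exponentially small off-diagonal block applied to a bounded function, hence in $\Omega_{\exp;\eta}(\cf^\perp)$.

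The main obstacle is establishing that the coupling between $\cf$ and $\cf^\perp$ inside $\mathrm{Ad}(v)$ decays exponentially: a naive estimate yields only $\mathrm{O}((1+t)^{-\zeta})$, which is too weak both for the $\Omega_\zeta(\cf)$ rate on the $\tau$-term and for \emph{any} exponential statement on $\cf^\perp$. The point that must be handled carefully is that this coupling is driven purely by the exponentially small $\mu^H$, while the merely polynomially small $\mu^D$ is both integrable (yielding a bounded fundamental solution) and annihilates $\tau$ because $\tau\in Z(\cf)$; these are exactly the two hypotheses the variation-of-parameters estimate requires.
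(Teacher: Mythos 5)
Your proposal is correct, and its skeleton coincides with the paper's: reduce to estimating $\mathrm{Ad}(u(t))\psi-\mathrm{Ad}(u(\infty))\psi$ for constant $\psi\in\cf$ by integrating $\tfrac{d}{ds}\mathrm{Ad}(u(s))\psi=[\dot uu^{-1},\mathrm{Ad}(u(s))\psi]$ from $t$ to $\infty$, then treat the two pieces of $\tau+\tfrac{\sigma}{2(t+1)}$ separately, using $\tau\in Z(\cf)$ to kill the dangerous diagonal contribution for the $\tau$-term and the extra factor $\tfrac{1}{2(t+1)}$ to upgrade the crude $\mathrm O((1+t)^{-\zeta})$ bound to membership in $\Omega_\zeta(\cf)$ for the $\sigma$-term. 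Where you genuinely depart from the paper is in the two technical devices you add: first conjugating by $u(\infty)$ so that all estimates are made for $v=u(\infty)^{-1}u$ with $v(\infty)=1$ and with $\mu=\dot vv^{-1}$ inheriting the split decay (legitimate because $\mathrm{Ad}(u(\infty)^{-1})$ preserves $\cf\oplus\cf^\perp$ and the norm), and second replacing the paper's one-line bi-invariance manipulation by a variation-of-parameters/Gronwall argument for the off-diagonal block of $\mathrm{Ad}(v)$. This second point is where your argument is actually more robust than the printed proof: the paper passes from $\bigl|\bigl(\tfrac{d}{ds}\mathrm{Ad}(u(s))\psi\bigr)^H\bigr|$ to $\bigl|[(u^{-1}\dot u)^H,\psi]\bigr|$, which implicitly commutes $\mathrm{Ad}(u(s))$ past the projection onto $\cf^\perp$ even though $\mathrm{Ad}(u(s))$ need not preserve the splitting at finite $s$; naively one then only gets polynomial decay of the $H$-component from the mixed terms. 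Your observation that the $\cf\to\cf^\perp$ coupling inside $\mathrm{Ad}(v)$ is driven solely by the exponentially small $\mathrm{ad}(\mu^H)$, combined with the ODE $\dot S=PS+f$ with $P$ integrable, $f$ exponential and $S(\infty)=0$, is exactly what is needed to justify the exponential decay claimed in the statement, and it also yields the slightly stronger (and more honest) conclusion that $(\mathrm{Ad}(v)\tau-\tau)^D$ is exponentially small rather than identically zero as the paper asserts. Both routes buy the same lemma; yours costs an extra fundamental-solution estimate but removes the only soft spot in the paper's version.
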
 
\begin{proof}
This is a consequence of the properties of the spaces $\Omega_\zeta(\cf)$ and $\Omega_{\exp}(\cf^\perp)$, but we spell out the details.
We can calculate for a constant $\psi\in\cf$ using $u(\infty) \in G\subset [C,C]$ and the bi-invariance of the norm on $\kf$,
\begin{eqnarray*}
|(\mathrm{Ad}(u(\infty))(\psi) - \mathrm{Ad}(u(t))(\psi))^D| &\leq& \int_t^\infty \left|\frac{d}{ds}(\mathrm{Ad}(u(s))(\psi))^D\right|ds\\
&=&\int_t^\infty|[\dot u(s)u^{-1}(s), \mathrm{Ad}(u(s))(\psi)]^D| ds \\
&=&\int_t^\infty|[u^{-1}(s)\dot u(s), \psi]^D| ds \\
&=& \int_t^\infty|[(u^{-1}(s)\dot u(s))^D, \psi]|ds\\
&\leq& 2|\psi|\Vert (\dot u(s)u^{-1})^D\Vert_{\Omega_\zeta}\int_t^\infty \frac{1}{(1+s)^{1+\zeta}}ds\\
&=& \mathrm O((1+t)^{-\zeta}).
\end{eqnarray*}
Similarly   
\begin{eqnarray*}
|(\mathrm{Ad}(u(\infty))(\psi) - \mathrm{Ad}(u(t))(\psi))^H| &\leq& \int_t^\infty \left|\frac{d}{ds}(\mathrm{Ad}(u(s))(\psi))^H\right|ds\\
&=& \int_t^\infty|[(u^{-1}(s)\dot u(s))^H, \psi]|ds\\
&\leq& 2|\psi|\Vert (\dot u(s)u^{-1})^H\Vert_{\Omega_{\exp;\eta}}\int_t^\infty e^{-\eta s}ds\\
&=& \mathrm O(e^{-\eta t}).
\end{eqnarray*}
We have seen in this calculation that the derivative $\frac{d}{dt}\mathrm{Ad}(u(t))(\psi)$ satisfies 
$$\left|\frac{d}{dt}\mathrm{Ad}(u(t))(\psi)\right| = |[u^{-1}(t)\dot u(t), \psi]|.$$
Since $\psi\in\cf$ is constant, it is immediate that the $D$ and $H$-parts of this are $\mathrm O(1+t)^{-(1+\zeta)}$ and $\mathrm O(e^{-\eta t})$, respectively, because $\mathrm{ad}(\psi)$ preserves the decomposition into $D$- and $H$-parts. 
Consequently,  the $H$-part of $(\mathrm{Ad}(u(t)) - \mathrm{Ad}(u(\infty)))(\tau + \frac{\sigma}{2(t+1)})$ lies in $\Omega_{\exp;\eta}(\cf^\perp)$. 
\medskip

To deal with $D$-component, we first consider $\psi = \tau\in Z(\cf)$. We see that since $[(\dot uu^{-1})^D, \tau] = 0$ the above discussion yields $|(\mathrm{Ad}(u(\infty))(\tau) - \mathrm{Ad}(u(t))(\tau))^D| \equiv 0$, which obviously lies in $\Omega_\zeta(\cf)$. 

If we take instead $\psi = \sigma\in[\cf,\cf]$, we find $|\mathrm{Ad}(u(\infty))(\sigma)^D - \mathrm{Ad}(u(t))(\sigma)^D| = \mathrm O(1+t)^{-\zeta}$ and $|\frac{d}{dt}\mathrm{Ad}(u(t))(\sigma)^D| = \mathrm O(1+t)^{-(1+\zeta)}$. This implies that $\left(\mathrm{Ad}(u(\infty))(\frac{\sigma}{2(t+1)}) - \mathrm{Ad}(u(t))(\frac{\sigma}{2(t+1)})\right)^D \in \Omega_\zeta(\cf)$. 
\end{proof}

In the next lemma we verify that the group in fact does act on the space $\mathcal A_C$, i.e. it preserves the asymptotic conditions imposed on our Nahm data.

\begin{lemma}\label{groupaction}
The gauge group $\mathcal G_G$ acts on the space $(\mathcal A_C, \Vert\cdot \Vert_{B,b})$ by isometries.
\end{lemma}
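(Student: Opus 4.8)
The plan is to prove two things: that $\mathcal G_G$ maps $\mathcal A_C$ into itself, and that its differential preserves the bilinear form $\|\cdot\|_{B,b}$ pointwise in $t$. The first part is essentially a restatement of Lemma \ref{uinfty}. For $u\in\mathcal G_G$ and $T=T_{\tau,\sigma}+\tilde T\in\mathcal A_C$, I would write for $i\geq 1$
$$u.T_i = \mathrm{Ad}(u)\Big(\tau_i+\tfrac{\sigma_i}{2(t+1)}\Big) + \mathrm{Ad}(u)\tilde T_i,$$
and invoke Lemma \ref{uinfty} to see that the first summand equals $\tau_i+\tfrac{\mathrm{Ad}(u(\infty))\sigma_i}{2(t+1)}$ modulo $\Omega_\zeta(\cf)\oplus\Omega_{\exp}(\cf^\perp)$. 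Since $u(\infty)\in[C,C]$, the triple $\sigma'=\mathrm{Ad}(u(\infty))\sigma$ is again an $\su(2)$-triple in $\cf$ with the same limiting $\tau$. For $i=0$ the inhomogeneous term $-\dot u u^{-1}$ lies in $\Omega_\zeta(\cf)\oplus\Omega_{\exp}(\cf^\perp)$ by the defining conditions on $\mathcal G_G$. Thus $u.T$ is asymptotic to the model solution $T_{\tau,\sigma'}$ and lies in $\mathcal A_C$, once one checks that $\mathrm{Ad}(u)$ preserves the space $\Omega_\zeta(\cf)\oplus\Omega_{\exp}(\cf^\perp)$ in which $\tilde T$ lives.

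Next I would compute the differential of the action on tangent vectors. Because the inhomogeneous term $-\dot u u^{-1}$ is independent of the base point $T$, differentiating $u.T$ along a path with tangent $X=(X_0,X_1,X_2,X_3)$ gives simply
$$(du).X = \big(\mathrm{Ad}(u)X_0,\,\mathrm{Ad}(u)X_1,\,\mathrm{Ad}(u)X_2,\,\mathrm{Ad}(u)X_3\big),$$
so the action on tangent vectors is pointwise conjugation by $u(t)$ in every component. The value at infinity transforms by $(\mathrm{Ad}(u)X_i)(\infty)=\mathrm{Ad}(u(\infty))\delta_i$; since $\delta_i\in Z(\cf)$ by Lemma \ref{algrel}(1), $u(\infty)\in C$ and $C$ is connected, in fact $\mathrm{Ad}(u(\infty))\delta_i=\delta_i$. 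Hence the value at infinity is literally unchanged and $(du).X$ is again a legitimate tangent vector to $\mathcal A_C$ at $u.T$.

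The isometry conclusion is then immediate from the $\mathrm{Ad}$-invariance of the bi-invariant inner product. For every $t$ and every $i$,
$$\langle \mathrm{Ad}(u(t))X_i(t),\,\mathrm{Ad}(u(t))X_i(t)\rangle = \langle X_i(t),X_i(t)\rangle,$$
and likewise $\langle(\mathrm{Ad}(u)X_i)(\infty),(\mathrm{Ad}(u)X_i)(\infty)\rangle=\langle\delta_i,\delta_i\rangle$. Therefore both the boundary term weighted by $b$ and the integrand $\langle X_i(t),X_i(t)\rangle-\langle X_i(\infty),X_i(\infty)\rangle$ are preserved pointwise, so $\|(du).X\|_{B,b}^2=\|X\|_{B,b}^2$; finiteness of both sides is guaranteed by Proposition \ref{metricfinite}.

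The hard part will be the well-definedness claim of the first paragraph, namely that conjugation by $u(t)$ preserves $\Omega_\zeta(\cf)\oplus\Omega_{\exp}(\cf^\perp)$. The subtlety is that for finite $t$ the operator $\mathrm{Ad}(u(t))$ need not respect the splitting $\kf=\cf\oplus\cf^\perp$: an exponentially decaying $\cf^\perp$-component may acquire a $\cf$-component, which is harmless since it still decays exponentially and $\Omega_{\exp}(\cf)\subset\Omega_\zeta(\cf)$, but more delicately a polynomially decaying $\cf$-component could leak into $\cf^\perp$. Controlling this leakage requires using that $u(t)\to u(\infty)\in C$, with the off-diagonal part of $\mathrm{Ad}(u(t))$ relative to the limiting $\mathrm{Ad}(u(\infty))\in\mathrm{Ad}(C)$ decaying like the $H$-part of $\dot u u^{-1}$, i.e. exponentially; this is precisely the estimate underlying Lemma \ref{uinfty}. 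By contrast, the isometry computation itself is a one-line consequence of $\mathrm{Ad}$-invariance once the differential formula $(du).X=\mathrm{Ad}(u)X$ is in hand.
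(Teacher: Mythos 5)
Your overall strategy is the same as the paper's: show the action preserves $\mathcal A_C$, observe that the induced action on tangent vectors is pointwise conjugation, and conclude isometry from the $\mathrm{Ad}$-invariance of the inner product (the paper's proof also ends with exactly this one-line observation, and your remarks about $\mathrm{Ad}(u(\infty))$ fixing $\delta_i\in Z(\cf)$ because $C$ is connected, and about the new triple $\sigma'=\mathrm{Ad}(u(\infty))\sigma$, are all consistent with it). Where you genuinely diverge is in the step you correctly single out as the hard one, namely that $\mathrm{Ad}(u(t))$ preserves $\Omega_\zeta(\cf)\oplus\Omega_{\exp}(\cf^\perp)$. The paper does not attack $\mathrm{Ad}(u(t))$ directly: it first proves the \emph{infinitesimal} statement (the fundamental vector fields $X^\xi_T$ are tangent to $\mathcal A_C$, via Lemmas \ref{fourth} and \ref{fg}), then integrates by splitting off $\exp(h(t)\xi_\infty)$ to reduce to $u(\infty)=1$, writing $u=\exp(\xi(t))$ for large $t$ inside the Banach Lie group $\mathcal G_{G;\eta}$, and invoking $\mathrm{Ad}(\exp\xi)=e^{\mathrm{ad}\xi}$ as an operator identity on $\Omega_\zeta(\cf)\oplus\Omega_{\exp;\eta}(\cf^\perp)$; the point is that $\mathrm{ad}(\xi^D)$ preserves the splitting $\cf\oplus\cf^\perp$, so the only $\cf\to\cf^\perp$ leakage comes from $\mathrm{ad}(\xi^H)$, which is exponentially small. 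Your proposed direct estimate on the off-diagonal block $P^{H}\,\mathrm{Ad}(u(t))\,P^{D}$ is the integrated form of the same mechanism and does work, but as written it is only a sketch: it is not literally "the estimate underlying Lemma \ref{uinfty}" (that lemma treats $\mathrm{Ad}(u)$ applied to the \emph{constant} model terms, not to decaying functions), and to complete it you would need (i) to fix a single decay rate $\eta$ first, since $\Omega_{\exp}$ is a nested union of Banach spaces with no norm of its own, (ii) a Gronwall-type argument for the ODE $\frac{d}{dt}\bigl(P^{H}\mathrm{Ad}(u)P^{D}\bigr)=P^{H}\mathrm{ad}(\dot uu^{-1})\mathrm{Ad}(u)P^{D}$, whose source term $\mathrm{ad}((\dot uu^{-1})^H)$ is exponentially small but which couples back through the bounded diagonal, and (iii) the corresponding estimate on the $t$-derivative, since membership in $\Omega_{\exp;\eta}(\cf^\perp)$ requires exponential decay of $\dot f$ as well as of $f$. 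If you supply those three points your route is a legitimate and arguably more concrete alternative to the paper's exponential-series argument; without them the key step remains asserted rather than proved.
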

\begin{proof}
We calculate the fundamental vector fields of the gauge action. At a point $T \in \mathcal A_C$ these are given by 
\index{$X^\xi_T$}
$$X^\xi_T = \frac{d}{d\theta}|_{\theta = 0} \exp(\theta\xi).T.$$
A short calculation gives 
\begin{equation}\label{fundvectorfields}
X^\xi_T = ([\xi,T_0] -\dot \xi, [\xi,T_1],[\xi,T_2],[\xi,T_3]).
\end{equation}
Writing $T = T_{\tau,\sigma} + \tilde T$ with $\tilde T\in (\Omega_\zeta(\cf)\oplus \Omega_{\exp}(\cf^\perp))^4$, and using $[\xi(\infty),\tau_i] =0$, we can write the components of $X^\xi_T$ as follows:
\begin{eqnarray*}
(X^\xi_T)_0 &=&  [\xi(t),\tilde T_0(t)] -\dot\xi(t),\\
(X^\xi_T)_i &=&\frac{[\xi(\infty),\sigma_i]}{2(t+1)} +[\xi(t) -\xi(\infty), \tau_i+\frac{\sigma_i}{2(t+1)}] + [\xi(t),\tilde T_i(t)], \quad i=1,2,3.
\end{eqnarray*}
We shall now show that the asymptotic conditions imposed on $\xi \in \mathrm{Lie}(\mathcal G_{G})$ and $\tilde T_i\in \Omega_\zeta(\cf) \oplus\Omega_{\exp}(\cf^\perp)$ imply that this  has the correct asymptotic behaviour for being a member of $T_T \mathcal A_C$.
 By definition of $\mathrm{Lie}(\mathcal G_G)$, $\dot\xi$ lies in $\Omega_\zeta(\cf) \oplus\Omega_{\exp}(\cf^\perp)$, which is correct. Furthermore, since $\xi(\infty) = \xi(\infty)^D$, we have 
$$[\tau_i+\frac{\sigma_i}{2(t+1)}, \xi(t) -\xi(\infty)] = [\tau_i+\frac{\sigma_i}{2(t+1)}, \xi(t)^D -\xi(\infty)] + [\tau_i+\frac{\sigma_i}{2(t+1)}, \xi(t)^H],$$
which also lies in $\Omega_\zeta(\cf) \oplus\Omega_{\exp}(\cf^\perp)$
by Lemma \ref{fourth}. 
 The $\frac{[\xi(\infty), \sigma_i]}{2(t+1)}$ term satisfies the
required conditions for the $\frac{\epsilon_i}{2(t+1)}$ term in the tangent vector.

Now $\tilde T_i \in \Omega_\zeta(\cf)\oplus\Omega_{\exp}(\cf^\perp)$ so that for $i=0,1,2,3$ we can write 
\begin{eqnarray*}
[\xi,\tilde T_i] &=& [\xi^D+\xi^H,\tilde T_i^D+\tilde T_i^H]\\
&=& ([\xi^D,\tilde T_i^D]+[\xi^H,\tilde T_i^H]^D) \oplus ([\xi^D,\tilde T_i^H]+[\xi^H,\tilde T_i^D] + [\xi^H,\tilde T_i^H]^H).
\end{eqnarray*}
Since $\xi$ and the $\tilde T_i$ are bounded, all terms lie in the correct function spaces to be tangent to $\mathcal A_C$ by lemma \ref{fg}. Thus, the fundamental vector fields of the action are tangent to $\mathcal A_C$,
as required. We therefore obtain an infinitesimal action of $\mathrm{Lie}(\mathcal G_G)$ on $\mathcal A_C$. 
In fact, as in Lemma \ref{fourth} this proves the stronger result that for $\xi\in\mathrm{Lie}(\mathcal G_{G;\eta})$ the vector field $X^\xi$ will be tangent to $\mathcal A_{C;\eta'}$ whenever $\eta\geq\eta'$. Here $\mathcal A_{C;\eta'}$ is the space obtained by replacing $\Omega_{\exp}$ by $\Omega_{\exp;\eta'}$ in the definition of $\mathcal A_C$. 
\index{$\mathcal A_{C;\eta'}$}

We wish to show that this infinitesimal action integrates to an action of $\mathcal G_G$. 
For this, we observe that all the conditions defining $\mathcal A_C$ are asymptotic in nature, and so we only have to worry about the behaviour for large $t$. First of all, for $u\in\mathcal G_G$ the terms $(\dot uu^{-1})^D$ and $(\dot uu^{-1})^H$ clearly satisfy the correct decay conditions. And we have already seen in Lemma \ref{uinfty} that $u$ essentially acts like $u(\infty)$ on a model solution. It thus remains to take care of the terms of the form $\mathrm{Ad}(u)(\tilde T_i)$ for $\tilde T_i\in\Omega_\zeta(\cf)\oplus\Omega_{\exp}(\cf^\perp)$. 

Note that the conditions are preserved by constant $C$-valued gauge transformations, hence also by gauge transformations $u\in\mathcal G_G$ such that $u(t) \equiv u(\infty) \in G$ for all $t$ larger than some $t_0>0$. 

For a general $u\in\mathcal G_G$ with $u(\infty)\in G$, the compactness of $K$ implies that we can write $u(\infty) = \exp(\xi_\infty)$ for some $\xi_\infty\in\gf$. Now take a smooth non-negative function $h$ on $[0,\infty)$ such that $h(0) =0$ and $h(t) \equiv 1$ for $t>1$. Then the gauge transformation $\exp(h(t)\xi_\infty)$ lies in $\mathcal G_G$ and is constant for $t>1$, equal to $u(\infty)\in G$. Hence it preserves the space $\mathcal A_C$. Writing $u = \tilde u \exp(h(t)\xi)$, where $\tilde u(\infty) = 1$, we may thus assume without loss of generality that $u$ is asymptotic to $1\in G$ as $t$ tends to infinity. 

Now $u\in\mathcal G_G$ means that there exists an $\eta>0$ such that $u$ is contained in the Banach Lie group $\mathcal G_{G;\eta}$. Since $u$ tends to $1$ as $t\to \infty$, we may write $u(t) = \exp(\xi(t))$ for $t$ sufficiently large, where $\xi$ satisfies the asymptotic conditions for being a member of $\mathrm{Lie}(\mathcal G_{G;\eta})$. The above discussion shows that $\mathrm{ad}(\xi(t))$ preserves the space $\mathcal A_{C;\eta'}$ provided $\eta'\leq\eta$. 

Since $\mathcal G_{G;\eta}$ is a Banach Lie group, we have the identity $\mathrm{Ad}(\exp\xi) = e^{\mathrm{ad}\xi}$ in $\mathrm{End}(\Omega_\zeta(\cf)\oplus \Omega_{\exp;\eta}(\cf^\perp))$, which hence implies that $\exp(\xi(t))$ preserves the space $\mathcal A_{C;\eta'}$, too. Since $\Omega_{\exp}$ is the nested union of the spaces $\Omega_{\exp;\eta}$, this shows that $u$ preserves the space $\mathcal A_C$.

Finally, observe that the induced action of $\mathcal G_G$ on tangent vectors is just point-wise conjugation which clearly preserves the Bielawski metric.
\end{proof}
We remark that the above results and their proofs remain valid if we work with $\mathcal A_{C;\eta}$ and $\mathcal G_{G;\eta}$, i.e. if we consider functions with fixed decay rate $\eta$.  We will use a similar setting with fixed $\eta$ in $\S 8$, when we discuss a gauge-theoretic picture of the universal symplectic implosion.

\subsection{Constructing a candidate $\mathcal Q$ for the Universal Hyperk\"ahler Implosion}
We are now in a position to give a definition of the Nahm universal hyperk\"ahler implosion $\mathcal Q$. 
\index{$\mathcal Q$}

Let $\mathcal N$ denote the set of solutions to the Nahm equations in $\mathcal A$ and let $\mathcal N_C = \mathcal N \cap \mathcal A_C$. 
\index{$\mathcal N$}\index{$\mathcal N_C$}
In order to obtain the full implosion space $\mathcal Q$ we take the space $\mathcal N$ which is 
the union of the strata $\mathcal N_C$ over all compact subgroups $C$ of $K$ and
first quotient by $\mathcal G_1$, the group of gauge transformations as defined above with $G = \{1\}$, i.e.
asymptotic to the identity at infinity. The strata $\mathcal Q_C$ 
\index{$\mathcal Q_C$} 
of the implosion are obtained by performing further collapsings
on the strata $\mathcal N_C /\mathcal G_1 $ by $[C,C]$, where we identify $[C,C]$ with
$\mathcal G_{[C,C]}/\mathcal G_1$ in the obvious way via the homomorphism
given by evaluation at infinity. In particular on the top stratum, corresponding to triples that are regular in the sense that $C(\tau)=T$, no further
collapsing takes place. This is an obvious analogue of the picture for symplectic implosion discussed in $\S 1$. 

The implosion is now stratified by the moduli spaces $\mathcal Q_C =
\mathcal N_C/\mathcal G_{[C,C]}$. Although formally the Nahm equations
have an interpretation as the zero condition for the hyperk\"ahler
moment map associated with the action of the gauge group on the space
of Nahm data, neither the spaces $\mathcal N_C/\mathcal G_I$ nor
$\mathcal Q_C$ can be interpreted as hyperk\"ahler quotients in a
straightforward way. This is because the natural hypercomplex
structure is not well-defined on $\mathcal A_C$, as we have observed.
However, for each choice of $C$, the action of $[C,C]$ can be used to
move the triples $\sigma$ into one of a finite list of standard
possibilities (one for each nilpotent orbit in the complexification
$C_\mathbb C$). The remaining freedom we have by which to quotient is
$[C,C] \cap C(\sigma)$, the intersection of $[C,C]$ with the common
centraliser of $\sigma$. We thus obtain a refined stratification of
$\mathcal Q_C$ into strata $\mathcal Q_{C,\sigma} = \mathcal
N_{C,\sigma}/(\mathcal G_{[C,C]\cap C(\sigma)})$ indexed by the
finitely many standard triples $\sigma$. Here $\mathcal N_{C,\sigma}$ 
\index{$\mathcal N_{C,\sigma}$}
denotes the space of solutions in $\mathcal N_{C}$ with
$\su(2)$-triple equal to the standard triple $\sigma$.  In $\S 4$ we
will provide an analytical framework to show that the strata $\mathcal
Q_{C,\sigma}$ 
\index{$\mathcal
Q_{C,\sigma}$,$\mathcal Q_{C,\sigma}(b)$} arise in a natural way as hyperk\"ahler quotients. Thus,
$\mathcal Q_C$ is a stratified hyperk\"ahler space with metric
induced by $\Vert\cdot\Vert_{B,b}$.  We shall sometimes write the strata as
$\mathcal Q_{C,\sigma}(b)$ to emphasise the metric dependence on $b$.

Moreover, we will show that hyperk\"ahler reduction of $\mathcal Q_{C,\sigma}$ by the action of the maximal torus $T$ at level $\tau$ then amounts to
fixing $\tau$ and collapsing $\mathcal N_{C,\sigma}/\mathcal G_{1}$ by $C(\sigma)\cap C$ rather than by 
$C(\sigma) \cap [C,C]$. This is equivalent to collapsing the space of solutions in $\mathcal N_C /\mathcal G_{1}$ asymptotic to the triple $\tau$ by $C$. As $\tau$ is now fixed, the Bielawski metric on the hyperk\"ahler torus quotients of the strata $\mathcal Q_{C,\sigma}$ reduces to the usual $L^2$-metric and we will show that, if the triple $\tau$ is Biquard-regular, these torus reductions give the strata of the Kostant variety associated with $\tau$. In other words, the hyperk\"ahler torus quotients of the spaces $\mathcal Q_{C,\sigma}$ now give exactly the coadjoint orbits
making up the Kostant variety, as in \cite{Biquard:1996}, \cite{Kovalev:1996}.
To make this more precise, we shall relate $\mathcal Q_{C,\sigma} \hkq T$ 
to Kronheimer's (respectively Biquard's) moduli spaces used to construct coadjoint orbits.

In particular if $C(\tau)=T$ then we are just fixing $\tau$ and factoring out by gauge transformations asymptotic to an element of $T$ at infinity.
This gives the regular semsimple orbit of $\tau$, which is the
unique stratum in the Kostant variety in this case, as in 
\cite{Kronheimer:1990}. 

On the other hand on the stratum given by $C=K=[K,K]$ (that is, if $\tau=0$), the torus action will be trivial. So we see the nilpotent variety sitting inside the implosion $\mathcal Q$ as the stratum $\mathcal Q_K$. Note that this is not the case in the quiver approach to hyperk\"ahler implosion when $K=\SU(n)$; see \cite{DKS},\cite{DKS-Sesh}. In the latter approach a stratification labelled by pairs $(C,\sigma)$ is obtained, but these strata are not hyperk\"ahler. In Remark \ref{comparison} below we suggest a modification of the construction of $\mathcal Q$ which, at least in the case when $K=\SU(2)$, can be identified with the universal hyperk\"ahler implosion constructed in \cite{DKS}.


\section{The Stratum $\mathcal Q_{C,\sigma}$ as a Hyperk\"ahler Quotient}
As remarked above, $\mathcal A_{C}$ is not a hypercomplex manifold. In order
to exhibit $\mathcal Q_{C}$ as a union of hyperk\"ahler strata
$\mathcal Q_{C, \sigma}$, we shall introduce a new family of spaces which
are better adapted to hyperk\"ahler geometry.

Choose a $C$-conjugacy class of $\su(2)$-triples in $\cf$ represented by the fixed triple  $\sigma =(\sigma_1,\sigma_2,\sigma_3)$. Define $\tilde{\mathcal A}_{C,\sigma}$ 
\index{$\tilde{\mathcal A}_{C,\sigma}$} 
to be the space of quadruples of $C^1$ functions 
$$(T_0,T_1,T_2,T_3):[0,\infty) \to \mathfrak k\otimes\mathbb R^4$$ 
such that there exists $\tau_0\in Z(\cf)$ and a commuting triple $\tau = (\tau_1, \tau_2, \tau_3)$ in $ \mathfrak t$, such that $C(\tau) = C$,  satisfying
\begin{itemize}
\item $T_0^D-\tau_0 \in \Omega_\zeta(\cf)$
\item $T_i^D - \tau_i -\frac{\sigma_i}{2(t+1)} \in \Omega_\zeta(\cf) \quad i=1,2,3, $
\item $T_i^H\in\Omega_{\exp}(\cf^\perp) \quad i=0,1,2,3.$
\end{itemize}
Here $\zeta$ is again fixed, depending on $C$, as explained in the preceding section. If we denote by $\mathcal A_{C,\sigma}$ the space of Nahm data defined as in $\S 3$, but with fixed $\sigma$, then we can write $\mathcal A_{C,\sigma} = \{T\in\tilde{\mathcal A}_{C,\sigma}�|  \tau_0 = 0\}\subset \tilde{\mathcal A}_{C,\sigma}$.

Our Nahm data are therefore asymptotic to Nahm data of the form
$T_{\tau_0,\tau,\sigma} =  \left(\tau_0, \left(\tau_i+\frac{\sigma_i}{2(t+1)}\right)_{i=1}^3\right)$, where $\sigma$ is fixed and the elements $\tau_i\in
\tf$ are allowed to vary subject to the constraint that $C(\tau) = C$, $\tau_0\in Z(\cf)$. Recall from the previous section
that $Z(\cf) \subset \tf$. Again, we call $T_{\tau_0,\tau,\sigma}$ 
\index{$T_{\tau_0,\tau,\sigma}$} 
a \emph{model solution}. Observe that by construction also $\tau_i\in Z(\cf)$ for $i=1,2,3$.
So $(\tau_0, \tau_1, \tau_2, \tau_3)$ lies in the open dense
subset of the quaternionic vector space
$Z(\cf) \otimes {\mathbb H} \subset \tf \otimes \mathbb H$ defined by
the condition that $C(\tau_1, \tau_2, \tau_3)$ is exactly $C$ rather than
just containing $C$.

At a point $T = (T_0,T_1,T_2,T_3) = T_{\tau_0,\tau,\sigma} +\tilde{T}$, with $\tilde{T}\in (\Omega_\zeta(\cf)\oplus\Omega_{\exp}(\cf^\perp))^4$, a tangent vector to $\tilde{\mathcal A}_{C,\sigma}$ may be written as a quadruple 
$X  = (X_0,X_1,X_2,X_3):[0,\infty) \to \mathfrak k\otimes \mathbb R^4$. As we now work with a fixed triple $\sigma$, no $\epsilon_i$-terms appear in the $X_i$.

The proof of Lemma \ref{algrel} shows that $\delta_0 = X_0(\infty)$ satisfies the same algebraic relations as the other $\delta_i$.

We use the notation
 $X_{\delta}  = (\delta_i)_{i=0}^3$ 
 \index{$X_\delta$}
and can now write $X - X_{\delta} \in (\Omega_\zeta(\cf)\oplus\Omega_{\exp}(\cf^\perp))^4$ for some $\delta$.
In other words
$$X_i^D-\delta_i \in \Omega_\zeta(\cf), \qquad X_i^H\in \Omega_{\exp}(\cf^\perp) \quad i=0,1,2,3.$$

We still use on $\tilde{\mathcal A}_{C,\sigma}$ the Bielawski  bilinear form
$$\Vert X\Vert_{B,b}^2 = b\sum_{i=0}^3 \langle X_i(\infty), X_i(\infty)\rangle + \int_0^\infty \sum_{i=0}^3(\langle X_i(t), X_i(t)\rangle - \langle X_i(\infty), X_i(\infty)\rangle)dt.$$
 This is finite on all tangent vectors to $\tilde{\mathcal A}_{C,\sigma}$, since for each tangent vector $X_i-\delta_i$ is integrable and we do not have to deal with $\epsilon_i$-terms, as the triple $\sigma$ is now fixed. The same argument as used in the proof of Lemma \ref{metricfinite} shows that it is non-degenerate at each point of $\tilde{\mathcal A}_{C,\sigma}$

As a consequence of the introduction of the limit $\tau_0$ and of fixing $\sigma$ the space $\tilde{\mathcal A}_{C,\sigma}$ is now hypercomplex: We have three anti-commuting complex structures $I,J,K$ 
\index{$I,J,K$} 
obeying the quaternionic relations induced by right-multiplication by $-i,-j,-k$ on $\mathfrak k\otimes \mathbb R^4 \cong \mathfrak k\otimes \mathbb H$.
Explicitly,
\begin{eqnarray*}
IX &=&(X_1,-X_0,-X_3,X_2),\\
JX &=&(X_2,X_3,-X_0,-X_1),\\
KX &=&(X_3,-X_2,X_1,-X_0).
\end{eqnarray*}
Obviously, these complex structures are orthogonal with respect to the Bielawski metric on $\tilde{\mathcal A}_{C,\sigma}$ and preserve the tangent spaces $T_T\tilde{\mathcal A}_{C,\sigma}$, i.e. take tangent vectors to tangent vectors.
 
Thus, we may interpret $(\tilde{\mathcal A}_{C,\sigma}, \Vert\cdot\Vert_{B,b}, I,J,K)$ 
as an infinite-dimensional flat pseudo-hyperk\"ahler space, i.e. a 
 hypercomplex space with a compatible non-degenerate, translation-invariant symmetric bilinear form. 

\subsection{The Gauge Group}
To define the appropriate gauge group, we draw inspiration from
\cite{Bielawski:1998}. 

\begin{definition}\label{tildeG}
Let  $G\subset (C\cap C(\sigma))$ be a subgroup with Lie algebra $\gf$. We consider the gauge
group $\tilde{\mathcal G}_{G}(b)$, 
\index{$\tilde{\mathcal G}_{G}(b)$} 
whose Lie algebra $\mathrm{Lie}(\tilde{\mathcal G}_{G}(b))$ 
\index{$\mathrm{Lie}(\tilde{\mathcal G}_{G}(b))$} 
is given by the space of $C^2$ paths $\xi:[0,\infty) \to \mathfrak k$ such that
\begin{itemize}
\item $\xi(0) = 0$, 
\item $\dot\xi$ has a limit $\dot\xi(\infty) \in Z(\cf)$,  and $\tilde\xi(\infty) := b\dot\xi(\infty) + \lim_{t\to\infty}(\xi(t) - t\dot\xi(\infty)) \in \gf$,
\item $(\dot \xi^D- \dot\xi(\infty)) \in \Omega_\zeta(\cf)$ and $\dot\xi^H \in \Omega_{\exp}(\cf^\perp)$,
\end{itemize} 
\index{$\tilde\xi(t)$}
\end{definition}
\begin{remark}
\begin{enumerate}
\item The limit $\lim_{t\to\infty}(\xi(t) - t\dot\xi(\infty))$ exists, because the derivative of the function $\xi(t) -t\dot\xi(\infty)$ satisfies $\dot\xi(t) - \dot\xi(\infty) = \mathrm O((1+t)^{-1-\zeta})$. Hence, the mean value theorem implies that if $t<t'$ there exists a constant $M>0$ such that $|\xi(t) -t\dot\xi(\infty)- (\xi(t') - t'\dot\xi(\infty))| \leq M(1+t)^{-\zeta}$.
\item The second and third conditions mean that we can write any such $\xi$ in the form 
\begin{equation}\label{tildexi}
\xi(t) = (t-b)\dot\xi(\infty)  + \tilde\xi(t),
\end{equation}
where $\tilde \xi$ has a limit in $\gf$ and $\dot{\tilde\xi} \in \Omega_\zeta(\cf)\oplus\Omega_{\exp}(\cf^\perp)$. 
\item Note that the Lie algebra $\mathrm{Lie}(\mathcal G_G)$ defined earlier is contained in $\mathrm{Lie}(\tilde{\mathcal G}_G(b))$ as the subspace defined by the condition $\dot\xi(\infty) = 0$. The real  parameter $b$ is the same as the one that appears in the definition of the Bielawski metric. We will see in the next section, when we calculate moment maps, why the elements in $\mathrm{Lie}(\tilde{\mathcal G}_G(b))$ have to depend on $b$ in this way.
\end{enumerate}
\end{remark}

\begin{lemma} \label{fourth2}
If $\xi$ is in  $\mathrm{Lie}(\tilde{\mathcal G}_{G}(b))$ 
then,
$[\tau+\frac{\sigma}{2(t+1)},\xi(t)^D] \in \Omega_\zeta(\cf)$ and $[\tau+\frac{\sigma}{2(t+1)},\xi(t)^H] \in \Omega_{\exp}(\cf^\perp)$ for all $\tau\in \tf$ and $\sigma\in [\mathfrak c,\mathfrak c].$
\end{lemma}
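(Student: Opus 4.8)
The plan is to reduce the statement to the already-established Lemma \ref{fourth} by isolating and disposing of the linearly growing part of $\xi$. Using the decomposition recorded in the remark above, I would write $\xi(t) = (t-b)\dot\xi(\infty) + \tilde\xi(t)$, where $\dot\xi(\infty) \in Z(\cf)$, the remainder $\tilde\xi$ has a limit $\tilde\xi(\infty)\in\gf$, and $\dot{\tilde\xi} \in \Omega_\zeta(\cf)\oplus\Omega_{\exp}(\cf^\perp)$. Since the two asserted bracket conditions are purely asymptotic, the value $\tilde\xi(0)$ plays no role, so $\tilde\xi$ may be treated exactly as an element of $\mathrm{Lie}(\mathcal G_G)$ for the purposes of Lemma \ref{fourth}.

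The key observation is that the linear term is annihilated by the bracket. As $\dot\xi(\infty)\in Z(\cf)\subset\cf$, it is purely of $D$-type and commutes with every element of $\cf$; in particular it commutes with $\tau\in\tf\subset\cf$ and with $\sigma\in[\cf,\cf]\subset\cf$. Hence $[\tau+\frac{\sigma}{2(t+1)},(t-b)\dot\xi(\infty)]=0$ for all $t$, and the linear term contributes nothing to either the $D$- or the $H$-component. Consequently the two brackets in the lemma coincide with $[\tau+\frac{\sigma}{2(t+1)},\tilde\xi(t)^D]$ and $[\tau+\frac{\sigma}{2(t+1)},\tilde\xi(t)^H]$ respectively.

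It then remains to handle $\tilde\xi$. The $H$-component is immediate: $\tilde\xi^H\in\Omega_{\exp}(\cf^\perp)$, and bracketing with the bounded $\cf$-valued function $\tau+\frac{\sigma}{2(t+1)}$ preserves $\Omega_{\exp}(\cf^\perp)$ by Lemma \ref{fg}(2) (equivalently, this is the second conclusion of Lemma \ref{fourth} applied to $\tilde\xi$). For the $D$-component I would write $[\tau+\frac{\sigma}{2(t+1)},\tilde\xi(t)^D] = [\tau+\frac{\sigma}{2(t+1)},\tilde\xi(\infty)] - [\tau+\frac{\sigma}{2(t+1)},\tilde\xi(\infty)-\tilde\xi(t)^D]$. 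The second term lies in $\Omega_\zeta(\cf)$ by the first conclusion of Lemma \ref{fourth}, while the constant first term vanishes: $[\sigma,\tilde\xi(\infty)]=0$ because $\gf\subset C(\sigma)$ forces $\tilde\xi(\infty)$ to centralise $\sigma$, and $[\tau,\tilde\xi(\infty)]=0$ because $\tau$ commutes with $\cf\ni\tilde\xi(\infty)$ (exactly as in Lemma \ref{fourth}). This yields the desired membership in $\Omega_\zeta(\cf)$.

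The only genuinely new difficulty compared with Lemma \ref{fourth} is the presence of the linearly growing term $(t-b)\dot\xi(\infty)$: after bracketing with the $\mathrm O((1+t)^{-1})$ factor $\frac{\sigma}{2(t+1)}$ such a term would a priori produce an $\mathrm O(1)$ contribution and destroy the required $\Omega_\zeta$-decay. The crux of the argument is therefore the algebraic fact that the defining condition $\dot\xi(\infty)\in Z(\cf)$ is precisely what makes this bracket vanish identically; together with the vanishing of the residual constant term, which is where the hypothesis $\gf\subset C\cap C(\sigma)$ enters, this reduces everything to Lemma \ref{fourth}, after which the decay estimates are routine via Lemma \ref{fg}.
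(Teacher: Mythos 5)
Your proposal is correct and follows essentially the same route as the paper: the same decomposition $\xi(t) = (t-b)\dot\xi(\infty) + \tilde\xi(t)$, the same observation that $\dot\xi(\infty)\in Z(\cf)$ kills the linearly growing term and that $\tilde\xi(\infty)\in\gf\subset\cf\cap\cf(\sigma)$ kills the residual constant, reducing everything to the estimates of Lemma \ref{fourth}. The only cosmetic difference is that the paper re-derives those integral estimates for $\tilde\xi$ explicitly rather than citing Lemma \ref{fourth}, whereas you invoke it directly (correctly noting that the condition $\xi(0)=0$ is irrelevant to the asymptotic conclusion).
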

\begin{proof}
Let $\xi(t) = (t-b)\dot\xi(\infty) + \tilde \xi(t)\in \mathrm{Lie}(\tilde{\mathcal G}_{G}(b))$, where $\xi(0) = 0$ and $\tilde\xi$ has a limit in the subalgebra $\gf\subset\cf$. In particular, $\tilde\xi(\infty) = \tilde\xi(\infty)^D$. Moreover, since $\dot\xi- \dot\xi(\infty) = \dot{\tilde\xi}$, we have  $\dot{\tilde\xi}^D\in\Omega_\zeta(\cf)$  as well as $\dot{\tilde\xi}^H\in\Omega_{\exp}(\cf^\perp)$. 

Since  $\tilde{\xi}(\infty) \in \gf$ and we are assuming
that $G \subset C \cap C(\sigma)$, and since
 $\dot\xi(\infty) \in Z(\cf)$ commutes with $\tau$ and $\sigma$, we have
$$ -[\tau + \frac{\sigma}{2(t+1)}, \xi(t)^D] =
[\tau + \frac{\sigma}{2(t+1)},\tilde\xi(\infty)-\xi(t)^D] = [\tau +\frac{\sigma}{2(t+1)},\tilde\xi(\infty)-\tilde\xi(t)^D],$$

$$\quad [\tau +\frac{\sigma}{2(t+1)},\xi(t)^H] = [\tau + \frac{\sigma}{2(t+1)},\tilde\xi(t)^H].$$
We check that these two terms have the required asymptotic behaviour. By writing 
$$\tilde\xi(\infty) - \tilde\xi(t) = \int_t^\infty\dot{\tilde\xi}(s)ds,$$
and using $\dot{\tilde\xi}^D\in\Omega_\zeta(\cf)$, $\dot{\tilde\xi}^H\in\Omega_{\exp}(\cf^\perp)$, we see that  
$$\tilde\xi(\infty) - \tilde\xi^D = \mathrm O((1+t)^{-\zeta})\quad \text{and}\quad \tilde\xi^H \in \Omega_{\exp}(\cf^\perp).$$ 
Then since $\tau$ and $\dot\xi(\infty)$ act trivially on $\cf$, i.e. commute with $\xi^D$, we find for any $\sigma\in[\cf,\cf]$ 
$$[\tau + \frac{\sigma}{2(t+1)},\tilde\xi(\infty)-\xi(t)^D] = [\frac{\sigma}{2(t+1)},\tilde\xi(\infty)-\tilde\xi(t)^D] = \mathrm O((1+t)^{-(1+\zeta)})$$
and 
\begin{eqnarray*}
\frac{d}{dt}[\tau + \frac{\sigma}{2(t+1)},\tilde\xi(\infty)-\xi(t)^D] &=& \frac{d}{dt} [\frac{\sigma}{2(t+1)},\tilde\xi(\infty)-\tilde\xi(t)^D] \\
&=& -[\frac{\sigma}{2(t+1)^2},\tilde\xi(\infty)-\tilde\xi(t)^D] - [\frac{\sigma}{2(t+1)},\dot{\tilde\xi}(t)^D] \\
&=& \mathrm O((1+t)^{-(2+\zeta)}).
\end{eqnarray*}
For the $\cf^\perp$-component, we calculate similarly
$$[\tau + \frac{\sigma}{2(t+1)},\tilde\xi(t)^H] = O(e^{-\eta t})$$
and 
$$\frac{d}{dt}[\tau + \frac{\sigma}{2(t+1)},\tilde\xi(t)^H] = -[\frac{\sigma}{2(t+1)^2},\tilde\xi(t)^H] + [\tau + \frac{\sigma}{2(t+1)},\dot{\tilde\xi}(t)^H] = \mathrm O(e^{-\eta t})$$
for some $\eta >0$.
\end{proof}

\begin{lemma} 
$\mathrm{Lie}(\tilde{\mathcal G}_{G}(b))$ is a Lie algebra.
\end{lemma}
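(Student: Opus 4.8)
The plan is to mimic the proof that $\mathrm{Lie}(\mathcal G_G)$ is a Lie algebra, the only new ingredient being the linearly growing term $(t-b)\dot\xi(\infty)$ permitted by the definition of $\mathrm{Lie}(\tilde{\mathcal G}_G(b))$. It suffices to show closure under the bracket, so fix $\xi,\psi\in\mathrm{Lie}(\tilde{\mathcal G}_G(b))$ and set $\chi=[\xi,\psi]$. Since $\xi(0)=\psi(0)=0$ we have $\chi(0)=0$, so only the asymptotic conditions (the second and third bullet points) require work. Using the normal form of Remark (2) I would write $\xi(t)=(t-b)a+\tilde\xi(t)$ and $\psi(t)=(t-b)c+\tilde\psi(t)$, where $a=\dot\xi(\infty),\,c=\dot\psi(\infty)\in Z(\cf)$ and $\tilde\xi,\tilde\psi$ are bounded with limits in $\gf$ and $\dot{\tilde\xi},\dot{\tilde\psi}\in\Omega_\zeta(\cf)\oplus\Omega_{\exp}(\cf^\perp)$. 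Expanding,
\[
\chi = (t-b)^2[a,c] + (t-b)\bigl([a,\tilde\psi]+[\tilde\xi,c]\bigr) + [\tilde\xi,\tilde\psi].
\]

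The key observation, which I expect to be the crux, is that the potentially quadratically growing term $(t-b)^2[a,c]$ vanishes identically because $a,c\in Z(\cf)$ commute; this is precisely the role of the requirement $\dot\xi(\infty)\in Z(\cf)$. Next I would show the two linear cross-terms actually decay exponentially. Since $a\in Z(\cf)$ kills the $\cf$-component of $\tilde\psi$ we have $[a,\tilde\psi]=[a,\tilde\psi^H]$; and because $\tilde\psi(\infty)\in\gf\subset\cf$ forces $\tilde\psi^H(\infty)=0$, integrating $\dot{\tilde\psi}^H\in\Omega_{\exp}(\cf^\perp)$ from infinity exactly as in Lemma \ref{fourth2} gives $\tilde\psi^H\in\Omega_{\exp}(\cf^\perp)$. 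As $\mathrm{ad}(a)$ preserves the splitting $\kf=\cf\oplus\cf^\perp$, it follows that $[a,\tilde\psi]\in\Omega_{\exp}(\cf^\perp)$, and multiplying by the polynomial factor $(t-b)$ only lowers the decay rate slightly, so $(t-b)[a,\tilde\psi]\in\Omega_{\exp}(\cf^\perp)$; the term $(t-b)[\tilde\xi,c]$ is identical.

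Finally, $[\tilde\xi,\tilde\psi]$ falls under exactly the situation treated in the proof that $\mathrm{Lie}(\mathcal G_G)$ is a Lie algebra: it is bounded with limit $[\tilde\xi(\infty),\tilde\psi(\infty)]\in[\gf,\gf]\subset\gf$ (as $\gf$ is a subalgebra), and decomposing $\frac{d}{dt}[\tilde\xi,\tilde\psi]=[\dot{\tilde\xi},\tilde\psi]+[\tilde\xi,\dot{\tilde\psi}]$ into $D$- and $H$-components and applying Lemma \ref{fg} shows this derivative lies in $\Omega_\zeta(\cf)\oplus\Omega_{\exp}(\cf^\perp)$. Assembling the three pieces, $\frac{d}{dt}\chi\in\Omega_\zeta(\cf)\oplus\Omega_{\exp}(\cf^\perp)$; in particular $\dot\chi(\infty)=0\in Z(\cf)$, which verifies the third condition together with the first half of the second, while the remaining quantity $b\dot\chi(\infty)+\lim_{t\to\infty}(\chi(t)-t\dot\chi(\infty))=\chi(\infty)=[\tilde\xi(\infty),\tilde\psi(\infty)]$ lies in $\gf$. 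This establishes all the defining conditions for $\chi$, so $\mathrm{Lie}(\tilde{\mathcal G}_G(b))$ is closed under the bracket and hence a Lie algebra.
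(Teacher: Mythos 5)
Your proof is correct and follows essentially the same route as the paper's: decompose $\xi=(t-b)\dot\xi(\infty)+\tilde\xi$, use $\dot\xi(\infty)\in Z(\cf)$ to kill the quadratic term and reduce the linear cross-terms to brackets with exponentially decaying $H$-components, and handle $[\tilde\xi,\tilde\psi]$ via the $D$/$H$ splitting and Lemma \ref{fg}. The only cosmetic difference is that you expand $[\xi,\psi]$ itself into three pieces before differentiating, whereas the paper differentiates first; the substance is identical.
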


\begin{proof}
We only have to check that $\mathrm{Lie}(\tilde{\mathcal G}_{G}(b))$ is closed under the Lie bracket.
Let $\xi,\psi \in \mathrm{Lie}(\tilde{\mathcal G}_{G}(b))$. 

Clearly, the first condition holds for $[\xi, \psi]$.

We may write $\xi(t) = (t-b)\dot\xi(\infty) + \tilde \xi(t)$ and $\psi(t) = (t-b)\dot\psi(\infty) + \tilde \psi(t)$ with $\dot\xi(\infty),\dot\psi(\infty) \in Z(\cf)$ and $\dot{\tilde\xi},\dot{\tilde\psi} = \mathrm O((1+t)^{-(1+\zeta)})$. Then using that $Z(\cf)$ acts trivially on $\cf$, we calculate
\begin{eqnarray*}
\frac{d}{dt}[\xi,\psi]  &=& [\dot\xi,\psi] + [\xi,\dot\psi]\\
&=& [\dot \xi(\infty) + \dot{\tilde\xi},(t-b)\dot\psi(\infty) + \tilde\psi] + [(t-b)\dot\xi(\infty) + \tilde\xi,\dot\psi(\infty) + \dot{\tilde\psi}]\\
&=& [\dot\xi(\infty), \tilde\psi^H] + [\dot{\tilde\xi}^H,(t-b)\dot\psi(\infty)]  + [(t-b)\dot\xi(\infty), \dot{\tilde\psi}^H] + [\tilde\xi^H,\dot\psi(\infty)] + [\dot{\tilde\xi}, \tilde\psi] + [\tilde\xi, \dot{\tilde\psi}].
\end{eqnarray*} 
We have seen in the proof of the previous lemma that  $\tilde\xi^H, \tilde\psi^H\in\Omega_{\exp}(\cf^\perp)$, so the first four terms in the last line of the above equation converge to zero as $t\to\infty$. Moreover, $\tilde\psi,\tilde\xi$ are bounded with $\dot{\tilde\xi},\dot{\tilde\psi} = \mathrm O((1+t)^{-(1+\zeta)})$. Thus, the last two terms also converge to $0\in Z(\cf)$. Now 
\begin{eqnarray*}
\lim_{t\to\infty}[\xi(t),\psi(t)] &=& \lim_{t\to\infty}\left( [(t-b)\dot\xi(\infty) + \tilde\xi, (t-b)\dot\psi(\infty) + \tilde\psi] \right)\\
&=& \lim_{t\to\infty}\left((t-b)([\dot\xi(\infty), \tilde\psi^H] + [\tilde\xi^H, \dot\psi(\infty)])\right) + [\tilde\xi(\infty),\tilde\psi(\infty)]\\
&=& [\tilde\xi(\infty),\tilde\psi(\infty)],
\end{eqnarray*}
where we used again the exponential decay of $\tilde\xi^H,\tilde\psi^H$ to deduce that $ \lim_{t\to\infty}(t-b)([\dot\xi(\infty), \tilde\psi^H] + [\tilde\xi^H, \psi(\infty)]) =0$. Note in particular that $[\tilde\xi(\infty),\tilde\psi(\infty)]\in \gf$. Thus, we have verified the second condition for $[\xi,\psi]$.

To check the third condition, we consider again
\begin{eqnarray*}
\frac{d}{dt}[\xi,\psi] &=& [\dot\xi(\infty), \tilde\psi^H] + [\dot{\tilde\xi}^H,(t-b)\dot\psi(\infty)]  + [(t-b)\dot\xi(\infty), \dot{\tilde\psi}^H] + [\tilde\xi^H,\dot\psi(\infty)] + \frac{d}{dt} [\tilde\xi, \tilde\psi].
\end{eqnarray*}
The first four terms lie in $\Omega_{\exp}(\cf^\perp)$, since $\tilde \xi^H,\tilde\psi^H$ and their first derivatives do. So we have to consider the term  $\frac{d}{dt} [\tilde\xi, \tilde\psi]$, which we decompose according to the splitting $\kf=\cf\oplus\cf^\perp$, i.e. into $D$ and $H$-parts:
\begin{eqnarray*}
\frac{d}{dt} [\tilde\xi, \tilde\psi] &=& [\dot{\tilde\xi}^D +\dot{\tilde\xi}^H,\tilde\psi^D+\tilde\psi^H] + [\tilde\xi^D+\tilde\xi^H,\dot{\tilde\psi}^D+\dot{\tilde\psi}^H]\\
&=&[\dot{\tilde\xi}^D,\tilde\psi^D] + [\tilde\xi^D,\dot{\tilde\psi}^D] + ([\dot{\tilde\xi}^H,\tilde\psi^H] + [\tilde\xi^H,\dot{\tilde\psi}^H])^D \\
& & \oplus  [\dot{\tilde\xi}^H,\tilde\psi^D] + [\dot{\tilde\xi}^D,\tilde\psi^H] + [\tilde\xi^D,\dot{\tilde\psi}^H]  + [\tilde\xi^H,\dot{\tilde\psi}^D] + ([\dot{\tilde\xi}^H,\tilde\psi^H] + [\tilde\xi^H,\dot{\tilde\psi}^H])^H.
\end{eqnarray*}
The terms $([\dot{\tilde\xi}^H,\tilde\psi^H] + [\tilde\xi^H,\dot{\tilde\psi}^H])^D$ and $([\dot{\tilde\xi}^H,\tilde\psi^H] + [\tilde\xi^H,\dot{\tilde\psi}^H])^H$ clearly lie in $\Omega_{\exp}(\cf)\subset\Omega_\zeta(\cf)$ and $\Omega_{\exp}(\cf^\perp)$ respectively.  The remaining terms lie in $\Omega_\zeta(\cf)\oplus\Omega_{\exp}(\cf^\perp)$ as is implied by lemma \ref{fg}.

Altogether we have verified that $[\xi,\psi]\in\mathrm{Lie}(\tilde{\mathcal G}_{G}(b))$, which is therefore indeed a Lie algebra.
\end{proof}

The space $\mathrm{Lie}(\tilde{\mathcal G}_{G}(b))$ is the Lie algebra of the gauge group $\tilde{\mathcal G}_{G}(b)$ given by the space of $K$-valued $C^2$-paths
$u:[0,\infty) \to K$ such that 
\begin{itemize}
\item $u(0) = 1,$ 
\item $s(u) := \lim_{t\to\infty}(\dot u(t) u(t)^{-1})$ exists and lies in $Z(\cf)$ and $\tilde u(\infty) := \exp(bs(u))(\lim_{t\to\infty}\exp(-ts(u))u(t)) \in G,$
\item $(\dot u u^{-1})^D-s(u) \in \Omega_\zeta(\cf)$ and $(\dot u u^{-1})^H \in \Omega_{\exp}(\cf^\perp),$
\end{itemize}
In the next lemma, we verify that it in fact does act on the space $\tilde{\mathcal A}_{C,\sigma}$, i.e. it preserves the asymptotic conditions imposed on our Nahm data.

\begin{lemma}\label{groupaction2}
The action of the gauge group $\tilde{\mathcal G}_{G}(b)$ preserves the space $(\tilde{\mathcal A}_{C,\sigma},\Vert\cdot\Vert_{B,b})$.
\end{lemma}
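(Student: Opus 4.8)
The plan is to follow the template of Lemma~\ref{groupaction}: first exhibit the infinitesimal action and verify it is tangent to $\tilde{\mathcal A}_{C,\sigma}$, then integrate it to a genuine group action, the one new ingredient being the linearly growing part of the gauge parameter. For $\xi\in\mathrm{Lie}(\tilde{\mathcal G}_{G}(b))$ the usual computation gives the fundamental vector field $X^\xi_T=([\xi,T_0]-\dot\xi,[\xi,T_1],[\xi,T_2],[\xi,T_3])$. Writing $\xi(t)=(t-b)\dot\xi(\infty)+\tilde\xi(t)$ with $\dot\xi(\infty)\in Z(\cf)$ and $\tilde\xi(\infty)\in\gf$, I would check tangency to $\tilde{\mathcal A}_{C,\sigma}$. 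The only genuinely new terms are those carrying the growing factor $(t-b)\dot\xi(\infty)$; these are controlled because $\dot\xi(\infty)\in Z(\cf)$ commutes with $\tau$, with $\sigma$, and with every $D$-component, so each potentially growing contribution either vanishes identically or is a polynomial multiple of an $H$-component lying in $\Omega_{\exp}(\cf^\perp)$, hence still decays exponentially. The remaining terms are handled verbatim by Lemmas~\ref{fourth2} and~\ref{fg}, exactly as before, while the $-\dot\xi(\infty)\in Z(\cf)$ summand of $-\dot\xi$ supplies the constant limit $\delta_0=X_0(\infty)$, consistent with the $\tau_0$-limit built into $\tilde{\mathcal A}_{C,\sigma}$.

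The core of the argument is the integration to a group action, and here the main new idea is to factor a general $u\in\tilde{\mathcal G}_{G}(b)$ as $u(t)=\exp(ts(u))\,v(t)$ with $v(t):=\exp(-ts(u))u(t)$. Using $s(u)\in Z(\cf)$ one finds
$$\dot v v^{-1}=\big((\dot u u^{-1})^D-s(u)\big)+\mathrm{Ad}(\exp(-ts(u)))\big((\dot u u^{-1})^H\big)\in\Omega_\zeta(\cf)\oplus\Omega_{\exp}(\cf^\perp),$$
and $v(t)\to v_\infty=\exp(-bs(u))\tilde u(\infty)$, where $v_\infty\in C\cap C(\sigma)$ since $\exp(-bs(u))$ lies in $\exp(Z(\cf))\subset C\cap C(\sigma)$ and $\tilde u(\infty)\in G\subset C\cap C(\sigma)$. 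Thus $v$ is precisely of the type treated in Lemmas~\ref{uinfty} and~\ref{groupaction}, and it acts on a model solution like the constant $v_\infty$, which fixes $\tau$, $\tau_0$ and $\sigma$ up to terms in $\Omega_\zeta(\cf)\oplus\Omega_{\exp}(\cf^\perp)$ (note $\mathrm{Ad}(v_\infty)$ fixes $\sigma$ exactly as $v_\infty\in C(\sigma)$, fixes $\tau_i,\tau_0\in Z(\cf)$ as $C$ is connected, and here $v_\infty$ need not lie in $[C,C]$, but the estimates of Lemma~\ref{uinfty} only require $v_\infty\in C$). The remaining factor $\exp(ts(u))$ is controlled directly: because $s(u)\in Z(\cf)$, the operator $\mathrm{Ad}(\exp(ts(u)))$ is the identity on $\cf$ and a bounded orthogonal rotation on $\cf^\perp$, so it preserves both $\Omega_\zeta(\cf)$ and $\Omega_{\exp}(\cf^\perp)$, the derivative estimate using that $\mathrm{ad}(s(u))$ is bounded.

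Combining the two factors, I would read off that $u.T_i\ (i=1,2,3)$ is again asymptotic to $\tau_i+\frac{\sigma_i}{2(t+1)}$ with the \emph{same} $\sigma$, while $\dot u u^{-1}=s(u)+(\text{decaying})$ shows that $u.T_0$ has limit $\tau_0'=\tau_0-s(u)\in Z(\cf)$. Hence $u.T$ lies in $\tilde{\mathcal A}_{C,\sigma}$ with model data $(\tau_0-s(u),\tau,\sigma)$ and with $C(\tau)=C$ unchanged. Finally, the induced action on tangent vectors is pointwise conjugation $X(t)\mapsto\mathrm{Ad}(u(t))X(t)$, since the affine term $-\dot u u^{-1}$ is independent of the base point and drops out on differentiation; as $\mathrm{Ad}(u(t))$ is orthogonal for each $t$ and fixes the limits $\delta_i\in Z(\cf)$ (because $\mathrm{Ad}(\exp(ts(u)))$ fixes $Z(\cf)$ pointwise and $\mathrm{Ad}(v(t))\to\mathrm{Ad}(v_\infty)$ with $v_\infty\in C$), both the boundary term and the integral term of $\|\cdot\|_{B,b}$ are preserved, so the action is by isometries.

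I expect the main obstacle to be the rigorous integration of the infinitesimal action in the presence of the linear growth: one must show that the Banach--Lie-group exponential argument of Lemma~\ref{groupaction} still applies after the factorization $u=\exp(ts(u))v$, i.e. that the growing abelian factor $\exp(ts(u))$ genuinely decouples and does not spoil the exponential decay of the $\cf^\perp$-components under conjugation, and that the shift $\tau_0\mapsto\tau_0-s(u)$ stays inside $Z(\cf)$. The decisive structural facts making everything fit are $s(u)\in Z(\cf)$ and $v_\infty\in C\cap C(\sigma)$; these guarantee that $\sigma$ is preserved and that the new $T_0$-limit remains in $Z(\cf)$, so the whole asymptotic model is mapped to an admissible model.
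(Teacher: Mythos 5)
Your proposal is correct and follows essentially the same route as the paper: verify tangency of the fundamental vector fields (with the growing term $(t-b)\dot\xi(\infty)$ harmless because $\dot\xi(\infty)\in Z(\cf)$ kills $D$-components and only multiplies exponentially decaying $H$-components), then integrate by splitting off the linearly growing central factor. The paper phrases the factorization as $u=\exp((t-b)\xi_0)\cdot(\text{element of }\mathcal G_G)$ for large $t$ rather than your global $u=\exp(ts(u))v$, but this is the same decoupling argument, and your observations about $v_\infty\in C\cap C(\sigma)$ and the shift $\tau_0\mapsto\tau_0-s(u)\in Z(\cf)$ match the paper's.
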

\begin{proof}
This is similar to the proof of the analogous statement in the previous section, but we have to take care of the different asymptotics. 
Let $T \in \tilde{\mathcal A}_{C,\sigma}$, $\xi\in\mathrm{Lie}(\tilde{\mathcal G}_G(b))$ and consider a fundamental vector field
$$X^\xi_T = ([\xi,T_0] -\dot \xi, [\xi,T_1],[\xi,T_2],[\xi,T_3]).$$
Writing $T = T_{\tau_0,\tau,\sigma} + \tilde T$ with $\tilde T\in (\Omega_\zeta(\cf)\oplus \Omega_{\exp}(\cf^\perp))^4$, we have
\begin{eqnarray*}
(X^\xi_T)_0 &=& [\xi(t), \tau_0] + [\xi(t),\tilde T_0(t)] -\dot\xi(\infty) -(\dot\xi(t) - \dot\xi(\infty))  \\
(X^\xi_T)_i &=&[\xi(t), \tau_i+\frac{\sigma_i}{2(t+1)}] + [\xi(t),\tilde T_i(t)], \quad i=1,2,3
\end{eqnarray*}
Lemma \ref{fourth2} shows that the first term for $i=1,2,3$ lies in
$\Omega_\zeta(\cf) \oplus\Omega_{\exp}(\cf^\perp)$, as required.
For $i=0$ the situation is simpler and we find using $[\tau_0,\xi(t)^D] = 0$ that
$$[\tau_0, \xi(t)] =  [\tau_0,  \xi(t)^H],$$  and applying Lemma \ref{fourth2} with $\sigma =0$, we get the desired result for the $[\xi(t), \tau_0]$
term. 

Similarly, decomposing $\dot\xi-\dot\xi(\infty) = \dot{\tilde\xi}$
into $D$ and $H$ parts, and using the third condition in the
definition of the gauge group, we see that this also satisfies the
correct decay conditions for a tangent vector to $\tilde{\mathcal
  A}_{C,\sigma}$. Finally $\dot{\xi}(\infty) \in Z(\cf)$ is a tangent
vector corresponding to deforming $\tau_0$.

Now $\tilde T_i \in \Omega_\zeta(\cf)\oplus\Omega_{\exp}(\cf^\perp)$ so that for $i=0,1,2,3$ we can write 
\begin{eqnarray*}
[\xi,\tilde T_i] &=& [\xi^D+\xi^H,\tilde T_i^D+\tilde T_i^H]\\
&=& ([\xi^D,\tilde T_i^D]+[\xi^H,\tilde T_i^H]^D) \oplus ([\xi^D,\tilde T_i^H]+[\xi^H,\tilde T_i^D] + [\xi^H,\tilde T_i^H]^H).
\end{eqnarray*}
Write as usual $\xi(t) = (t-b)\dot\xi(\infty) + \tilde\xi(t)$. Observing that $\xi^D(t) = (t-b)\dot\xi(\infty) + \tilde\xi^D(t)$, $\xi^H(t) = \tilde\xi^H(t)$ since $\dot\xi(\infty) \in Z(\cf)\subset \cf$, and that $\dot\xi(\infty)$ acts trivially on $D$-parts, we can refine this to
$$([\tilde\xi^D,\tilde T_i^D]+[\tilde\xi^H,\tilde T_i^H]^D) \oplus ([(t-b)\dot\xi(\infty) +\tilde\xi^D,\tilde T_i^H]+[\tilde\xi^H,\tilde T_i^D] + [\xi^H,\tilde T_i^H]^H).$$
Now since $\tilde\xi$ and the $\tilde T_i$ are bounded, all terms lie in the appropriate function spaces to be tangent to $\tilde{\mathcal A}_{C,\sigma}$ by lemma \ref{fg}. The only term not taken care of by this lemma is 
$$[t\dot\xi(\infty), \tilde T_i^H].$$
However, $\tilde T_i^H\in \Omega_{\exp}(\cf^\perp)$ and this implies easily that also $[t\dot\xi(\infty), \tilde T_i^H]\in \Omega_{\exp}(\cf^\perp)$.

Thus, the fundamental vector fields of the action are tangent to $\tilde{\mathcal A}_{C,\sigma}$. To see that this infinitesimal action integrates again to an action of the group $\tilde{\mathcal G_{G}}(b)$, we can essentially follow the same line of argument as in the proof of the corresponding statement in Lemma \ref{groupaction}. The only additional observation we need is the following: The asymptotic conditions defining $\tilde{\mathcal A}_{C,\sigma}$ are preserved by gauge transformations $u$ such that there exists a $\xi_0\in Z(\cf)$ so that for large $t$ we have $u(t) = \exp((t-b)\xi_0)$. Indeed, we can verify this directly: For large $t$, $\dot u(t)u^{-1}(t)$ is equal to $s(u) = \xi_0\in Z(\cf)$, so that $u.T_0(\infty) = \tau_0+\xi_0$, which is allowed. 

Since $u$ is $Z(C)$-valued $\mathrm{Ad}(u)$ acts trivially on $\tau+\frac{\sigma}{2(t+1)}$ and also on $\Omega_\zeta(\cf)$. To see that it preserves  $\Omega_{\exp}(\cf^\perp)$, we first use the bi-invariance of the norm on $\kf$ to see that 
$$|\mathrm{Ad}(u)(\tilde T_i)^H| = |\mathrm{Ad}(u)(\tilde T_i^H)| = |\tilde T_i^H| = \mathrm O(e^{-\eta t}),$$ 
whenever $\tilde T_i^H = \mathrm O(e^{-\eta t})$. Next for $t$ large enough so that $u(t) = \exp((t-b)\xi_0)$, consider the derivative
$$\left|\frac{d}{dt} (\mathrm{Ad}(u)(\tilde T_i))^H\right| \leq  |[\xi_0, \mathrm{Ad}(u)(\tilde T_i^H)]| +  |\mathrm{Ad}(u)(\dot{\tilde T_i}^H)| \leq 2|\xi_0||\tilde T_i^H| +  |\dot{\tilde T_i}^H| = \mathrm O(e^{-\eta t})$$
which again lies in the correct function spaces.

Now, for large $t$, we can write any $u\in\tilde{\mathcal G}_{G}(b)$ as the product of a $Z(C)$-valued gauge transformation $\exp((t-b)\xi_0)$ and a gauge transformation that lies in $\mathcal G_G$ (as defined in $\S 3$). The proof of  Lemma \ref{groupaction} shows that such gauge transformations preserve not just the space $\mathcal A_C$, but also the slightly different space $\tilde{\mathcal A}_{C,\sigma}$ in which $\sigma$ is fixed and $\tau_0\neq 0$ is allowed (recall that we assume in the definition of $\tilde{\mathcal G}_{G}(b)$ that $G\subset C\cap C(\sigma)$, so that $\tilde u(\infty)$ acts now trivially on $\sigma$).
\end{proof}

\subsection{Moment Maps}
In this section we calculate the hyperk\"ahler moment maps associated with the action of the gauge groups $\tilde{\mathcal G}_{G}(b)$ for various choices of subgroup $G\subset C\cap C(\sigma)$. The calculations also explain the dependence of $\tilde{\mathcal G}_{G}(b)$ on the parameter $b$ appearing in the Bielawski metric.  First we need a technical lemma which explains why the moment maps we obtain can be thought of as taking values in the dual of $\mathrm{Lie}(\tilde{\mathcal G}_{G}(b))$. 

For this it is useful to decompose  $\kf$ into orthogonal subspaces as follows 
$$\kf = Z(\cf) \oplus Z(\cf)^\perp,$$
where we observe $Z(\cf)^\perp = [\cf,\cf] \oplus \cf^\perp$ (cf. lemma \ref{algrel}). 
We write an element $\xi$ as $\xi^0+\xi^1$ 
\index{$\xi^0,\xi^1$} 
with respect to this splitting. 

\begin{lemma}\label{momentdual}
Let $\mu:[0,\infty) \to \kf$ be such that there exists a $\zeta>0$ with $\mu^0 = \mathrm O((1+t)^{-2-\zeta})$, $\mu^1 = \mathrm O((1+t)^{-2})$. Then $\mu$ determines an element of the dual of $\mathrm{Lie}(\tilde{\mathcal G}_{G}(b))$ via 
$$\mu(\xi) = \int_0^\infty \langle \xi(t),\mu(t)\rangle dt$$
\end{lemma}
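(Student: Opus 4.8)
The plan is to show that the integral defining $\mu(\xi)$ converges absolutely for every $\xi\in\mathrm{Lie}(\tilde{\mathcal G}_G(b))$; linearity in $\xi$ is then immediate, so that the pairing is a well-defined linear functional on $\mathrm{Lie}(\tilde{\mathcal G}_G(b))$, which is what is meant by $\mu$ determining an element of its dual. Since the integrand is continuous on $[0,\infty)$, only its decay as $t\to\infty$ is at issue. The key point is to match the growth of $\xi$ against the decay of $\mu$ component by component, using the orthogonal splitting $\kf=Z(\cf)\oplus Z(\cf)^\perp$ appearing in the statement.

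First I would recall, from Remark (2) together with the estimates in the proof of Lemma \ref{fourth2}, the precise asymptotics of an element $\xi\in\mathrm{Lie}(\tilde{\mathcal G}_G(b))$. Writing $\xi(t)=(t-b)\dot\xi(\infty)+\tilde\xi(t)$ with $\dot\xi(\infty)\in Z(\cf)$, $\tilde\xi(\infty)\in\gf\subset\cf$ and $\dot{\tilde\xi}\in\Omega_\zeta(\cf)\oplus\Omega_{\exp}(\cf^\perp)$, those estimates give $\tilde\xi(\infty)-\tilde\xi^D=\mathrm O((1+t)^{-\zeta})$ and $\tilde\xi^H\in\Omega_{\exp}(\cf^\perp)$. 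In particular $\tilde\xi=\tilde\xi^D+\tilde\xi^H$ is bounded on $[0,\infty)$.

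Next I would read off the asymptotics of the two components of $\xi$. Since the leading term $(t-b)\dot\xi(\infty)$ lies in $Z(\cf)$, it contributes only to $\xi^0$, so that $\xi^0=(t-b)\dot\xi(\infty)+(\tilde\xi)^0=\mathrm O(t)$ grows at most linearly, while $\xi^1=(\tilde\xi)^1=\mathrm O(1)$ is bounded because $\tilde\xi$ is. Using orthogonality of the splitting I then write $\langle\xi(t),\mu(t)\rangle=\langle\xi^0(t),\mu^0(t)\rangle+\langle\xi^1(t),\mu^1(t)\rangle$ and estimate the two summands separately. By the hypotheses on $\mu$,
$$|\langle\xi^0,\mu^0\rangle|=\mathrm O\!\left(t\,(1+t)^{-2-\zeta}\right)=\mathrm O((1+t)^{-1-\zeta}),\qquad |\langle\xi^1,\mu^1\rangle|=\mathrm O((1+t)^{-2}),$$
both of which are integrable on $[0,\infty)$. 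Hence $\int_0^\infty\langle\xi(t),\mu(t)\rangle\,dt$ converges absolutely and $\mu(\xi)$ is well defined.

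The computation is essentially bookkeeping and presents no serious analytic obstacle; the one point that must not be overlooked is the role of the extra decay exponent in the hypothesis $\mu^0=\mathrm O((1+t)^{-2-\zeta})$. This is exactly what is needed to absorb the linear growth of $\xi^0$ coming from the $(t-b)\dot\xi(\infty)$ term, whereas the merely quadratic decay $\mu^1=\mathrm O((1+t)^{-2})$ suffices because it is paired against the bounded component $\xi^1$. Thus the decomposition $\kf=Z(\cf)\oplus Z(\cf)^\perp$ is precisely designed so that all linear growth of $\xi$ is confined to the $Z(\cf)$ factor, where $\mu$ is required to decay faster.
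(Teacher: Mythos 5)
Your argument is correct and follows essentially the same route as the paper: decompose $\xi$ along $\kf = Z(\cf)\oplus Z(\cf)^\perp$, note that the linear growth from $(t-b)\dot\xi(\infty)$ is confined to the $Z(\cf)$ component where $\mu^0$ has the extra $\zeta$ of decay, and conclude absolute convergence. The paper's proof additionally remarks that the assignment $\mu\mapsto\int_0^\infty\langle\mu(t),\cdot\rangle\,dt$ is injective (via compactly supported test elements of $\mathrm{Lie}(\tilde{\mathcal G}_{G}(b))$), but that goes beyond what the statement literally requires.
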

\begin{proof}
Observe that by the definition of $\mathrm{Lie}(\tilde{\mathcal G}_{G}(b))$ we have 
$$\xi^0(t) = (t-b)\dot\xi(\infty) +\tilde\xi^0(t),\quad \xi^1(t) = \tilde\xi^1(t),$$
 where $\tilde \xi^i$ is bounded for $i=0,1$. Now
\begin{eqnarray*}
\int_0^\infty \langle \mu(t), \xi(t) \rangle dt &=& \int_0^\infty \langle \mu^0(t), (t-b)\dot\xi(\infty) +\tilde\xi^0(t) \rangle + \langle \mu^1(t), \tilde\xi^1(t)\rangle dt\\
&=& \int_0^\infty \mathrm O((1+t)^{-1-\zeta}) + \mathrm O((1+t)^{-2})  dt <\infty.
\end{eqnarray*}
To show that the association $\mu\mapsto \int_0^{\infty} \langle \mu(t), -\rangle dt$ is injective, we can use an argument analogous to the one we used to show the non-degeneracy of the Bielawski metric on $\mathcal A_C$. We just note that $\mathrm{Lie}(\tilde{\mathcal G}_{G}(b))$ contains the space of $\kf$-valued functions on $[0,\infty)$ which have compact support and vanish at $t=0$. 
\end{proof}

\subsubsection{The Moment Map for the Action of the Gauge Group associated with $G=[C,C]\cap C(\sigma)$}
\begin{proposition}\label{propmomentmaps}
The gauge group $\tilde{\mathcal G}_{[C,C]\cap C(\sigma)}(b)$ acts on the flat hyperk\"ahler manifold \\$(\tilde{\mathcal A}_{C,\sigma}, \Vert\cdot\Vert_{B,b}, I,J,K)$ in a tri-Hamiltonian fashion with hyperk\"ahler moment map \\ $\mu = (\mu_I,\mu_J,\mu_K): \tilde{\mathcal A}_{C,\sigma} \to \mathrm{Lie}(\tilde{\mathcal G}_{[C,C]\cap C(\sigma)}(b))^*$ given by
\begin{eqnarray*}
-\mu_I(T) &=& \dot T_1 + [T_0,T_1] - [T_2,T_3], \\
-\mu_J(T) &=& \dot T_2 + [T_0,T_2] - [T_3,T_1], \\
-\mu_K(T) &=& \dot T_3 + [T_0,T_3] - [T_1,T_2]. \\
\end{eqnarray*}
\end{proposition}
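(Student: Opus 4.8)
The plan is to establish the two distinct ingredients of the statement separately: first, that the three formulae genuinely define an element of $\mathrm{Lie}(\tilde{\mathcal G}_{[C,C]\cap C(\sigma)}(b))^*$; and second, that $\mu=(\mu_I,\mu_J,\mu_K)$ satisfies the defining relation of a hyperk\"ahler moment map for each of $I,J,K$. Since $\tilde{\mathcal A}_{C,\sigma}$ is a flat affine space and the forms $\omega_I(Y,Z)=\langle IY,Z\rangle_{B,b}$, $\omega_J,\omega_K$ are translation-invariant, they are automatically closed, and they are preserved by the gauge action, which acts on tangent vectors by pointwise $\mathrm{Ad}$ and hence commutes with right quaternion multiplication. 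So the substance of the proposition is the explicit moment map identity.

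For the first ingredient I would invoke Lemma \ref{momentdual}, reducing matters to the decay estimates $\mu^0=\mathrm O((1+t)^{-2-\zeta})$ and $\mu^1=\mathrm O((1+t)^{-2})$ for the $Z(\cf)$- and $Z(\cf)^\perp$-components. The key observation is that on a model solution $T_{\tau_0,\tau,\sigma}$ the expressions $\dot T_i+[T_0,T_i]-[T_j,T_k]$ vanish identically: this is exactly the fact that the model solves Nahm's equations, and it follows from $\tau_0,\tau_i\in Z(\cf)$ commuting with $\sigma\subset[\cf,\cf]$ together with the $\su(2)$-relations $[\sigma_j,\sigma_k]=-2\sigma_i$. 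Writing a general $T\in\tilde{\mathcal A}_{C,\sigma}$ as $T_{\tau_0,\tau,\sigma}+\tilde T$ with $\tilde T\in(\Omega_\zeta(\cf)\oplus\Omega_{\exp}(\cf^\perp))^4$, the moment map becomes a sum of terms each linear or bilinear in $\tilde T$, and the weighted decay of $\tilde T$ together with the product estimates of Lemma \ref{fg} yields the required rates (the $\cf^\perp$-directions in fact decay exponentially).

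For the moment map identity I would work one complex structure at a time, say $I$, and verify $d\langle\mu_I,\xi\rangle(X)=\omega_I(X^\xi,X)$, where $X^\xi$ is the fundamental vector field from Lemma \ref{groupaction2}. The left-hand side comes from differentiating $\langle\mu_I(T),\xi\rangle=-\int_0^\infty\langle \dot T_1+[T_0,T_1]-[T_2,T_3],\xi\rangle\,dt$ in the direction $X$, giving $-\int_0^\infty\langle \dot X_1+[X_0,T_1]+[T_0,X_1]-[X_2,T_3]-[T_2,X_3],\xi\rangle\,dt$. For the right-hand side I apply $I$ to $X^\xi$, so its components become $([\xi,T_1],\,\dot\xi-[\xi,T_0],\,-[\xi,T_3],\,[\xi,T_2])$, and then pair with $X$ in the Bielawski form. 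Matching the two expressions after an integration by parts in the $\dot\xi$ term, and using skew-symmetry of $\mathrm{ad}$ together with bi-invariance of the metric to move brackets across the pairing, produces the identity.

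The main obstacle, and the reason the gauge group depends on $b$, lies in the boundary terms of this integration by parts. Since $\xi(t)=(t-b)\dot\xi(\infty)+\tilde\xi(t)$ grows linearly, neither $\int_0^\infty\langle\dot\xi,X_1\rangle\,dt$ nor the naive $L^2$ pairing converges, and the cure is built into the Bielawski form. I would integrate by parts on a truncated interval $[0,R]$: the boundary contribution at $t=0$ vanishes because $\xi(0)=0$, while at $t=R$ the linearly divergent piece $R\langle\dot\xi(\infty),\delta_1\rangle$ is cancelled exactly by the integral of the subtracted constant $\langle(IX^\xi)_1(\infty),X_1(\infty)\rangle=\langle\dot\xi(\infty),\delta_1\rangle$ appearing in the Bielawski integrand. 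The finite remainder of the boundary term is $-b\langle\dot\xi(\infty),\delta_1\rangle$, which is in turn cancelled by the term $b\langle(IX^\xi)_1(\infty),X_1(\infty)\rangle$ in $\langle IX^\xi,X\rangle_{B,b}$; this second cancellation is precisely what forces the normalisation $(t-b)$ in the definition of $\mathrm{Lie}(\tilde{\mathcal G}_G(b))$. Throughout, the algebraic relations of Lemma \ref{algrel}, together with $\gf\subset[\cf,\cf]$ and $\dot\xi(\infty)\in Z(\cf)$, are what guarantee that cross terms such as $\langle\tilde\xi(\infty),\delta_1\rangle$ vanish and that all remaining integrals converge. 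Once the case of $I$ is settled, the identities for $J$ and $K$ follow verbatim by the cyclic symmetry of the three complex structures and of the Nahm equations.
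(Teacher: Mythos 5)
Your proposal is correct and follows essentially the same route as the paper: the same appeal to Lemma \ref{momentdual}, the same computation of $\omega_I(X^\xi,X)=\langle IX^\xi,X\rangle_{B,b}$ with an integration by parts, and the same three-way cancellation of boundary terms (the $t=0$ contribution via $\xi(0)=0$, the linear divergence against the subtracted constant in the Bielawski integrand, and the residual $-b\langle\dot\xi(\infty),\delta_1\rangle$ against the $b$-term, with $\langle\tilde\xi(\infty),\delta_1\rangle=0$ supplied by Lemma \ref{algrel}). The only cosmetic difference is that you verify the decay hypotheses of Lemma \ref{momentdual} by noting that the model solution exactly solves Nahm's equations, whereas the paper estimates each term directly and uses that the surviving $O((1+t)^{-2})$ piece $[\sigma_j,\sigma_k]/4(t+1)^2$ lies in $Z(\cf)^\perp$; both are valid.
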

\begin{remark}
Observe that by definition we have $T_0 = \tau_0+\tilde T_0$ and for $i=1,2,3$,  $T_i = \tau_i+\frac{\sigma_i}{2(t+1)} + \tilde T_i$ with $\sigma_i\in[\cf,\cf]$ and $\tilde T_i \in \Omega_\zeta(\cf)\oplus\Omega_{\exp}(\cf^\perp)$ for any $T\in\tilde{\mathcal A}_{C,\sigma}$. Thus, 
$$T_i^0 = \tau_i + \tilde T_i^0,\quad i=0,1,2,3\qquad  : \qquad
T_0^1 = \tilde T_0^1\quad : \quad T_i^1 = \frac{\sigma_i}{2(t+1)} + \tilde T_i^1,
\quad i=1,2,3.$$
This implies that 
$$\dot T_0 = \dot{\tilde{T_0}} = \mathrm O((1+t)^{-(2+\zeta)})$$
and
$$\dot T_i^0 = \dot{\tilde T_i}^0 = \mathrm O((1+t)^{-(2+\zeta)}), \quad \dot T_i^1 = -\frac{\sigma_i}{2(t+1)^2} + \dot{\tilde T_i}^1 = \mathrm O((1+t)^{-2}).$$ 
Further, we see 
$$[T_0,T_i] = \mathrm O((1+t)^{-(2+\zeta)}$$
and for any $j,k\in\{1,2,3\}$, since the $\tau_i$ mutually commute and $[\tau_j,\sigma_k]=0$, we have
\begin{eqnarray*}
[T_j,T_k] &=& [\tau_j+\frac{\sigma_j}{2(t+1)} + \tilde T_j, \tau_k+\frac{\sigma_k}{2(t+1)} + \tilde T_k] \\
&=& \frac{[\sigma_j,\sigma_k]}{4(t+1)^2} + \mathrm O((1+t)^{-(2+\zeta)}).
\end{eqnarray*}
The first term lies in $Z(\cf)^\perp$. Thus, for any $T\in \tilde{\mathcal A}_{C,\sigma}$ and any $i=I,J,K$, $\mu_i(T)$ satisfies the assumption of lemma \ref{momentdual} and so defines an element of $\mathrm{Lie}(\tilde{\mathcal G}_{[C,C]\cap C(\sigma)}(b))^*$. 
\end{remark}

\begin{proof}[Proof of the proposition]
The gauge group clearly preserves the hyperk\"ahler structure, since the induced action on $T\tilde{\mathcal A}_{C,\sigma}$ is given by 
$$u.(X_i)_{i=0}^3 = (\mathrm{Ad}(u)(X_i))_{i=0}^3,$$
which preserves the bi-invariant inner product and commutes with the complex structures. 

Observe that for $\xi\in\mathrm{Lie}(\tilde{\mathcal G}_{[C,C]\cap C\sigma)}(b))$ we have 
$$[\xi,T_i] = [(t-b)\dot\xi(\infty) + \tilde\xi, T_i^D+T_i^H] = [(t-b)\dot\xi(\infty) + \tilde\xi, T_i^H] + [\tilde\xi,T_i^D].$$
As $t\to\infty$ the first term converges to zero due to the exponential decay of $T_i^H$, while the second one  converges to 
$$[\tilde\xi(\infty), \tau_i] = 0,$$
since $\tilde\xi(\infty)\in [\cf,\cf]\subset \cf$. Altogether, we see 
$$X^\xi_T(\infty) =\lim_{t\to\infty}(-\dot\xi +[\xi,T_0], ([\xi,T_i])_{i=1}^3) = (-\dot\xi(\infty),0,0,0).$$

We now calculate the moment map $\mu_I$ associated with the complex structure $I$. It is characterised by the relation $d\mu_I(\xi)(X) = \omega_I(X^\xi, X)$ where $X$ is any tangent vector to $\tilde{\mathcal A}_{C,\sigma}$.
Let $\xi(t) = (t-b)\dot\xi(\infty) + \tilde \xi(t) \in \mathrm{Lie}(\tilde{\mathcal G}_{[C,C]\cap C(\sigma)}(b))$ and let $X = X_\delta + \tilde X$ be a tangent vector. As usual we will write $\delta_i = X_i(\infty)$. We calculate using $IX^\xi_T(\infty) = (0,\dot\xi(\infty),0,0)$

\begin{eqnarray*}
\omega_I(X^\xi_T,X) &=& \langle IX^\xi_T,X\rangle_{B,b}\\
&=& \langle([\xi,T_1], -[\xi,T_0] +\dot \xi, -[\xi,T_3],[\xi,T_2]),(X_0,X_1,X_2,X_3) \rangle_{B,b}\\
&=& \int_0^\infty(\langle[\xi(t),T_1(t)],X_0(t)\rangle + \langle \dot\xi(t) - [\xi(t),T_0(t)],X_1(t)\rangle -\langle \dot\xi(\infty),\delta_1\rangle  -\langle [\xi(t),T_3(t)],X_2(t)\rangle \\
& &  +\langle [\xi(t),T_2(t)],X_3(t)\rangle )dt +b\sum_{i=0}^3\langle (IX_T^\xi)_i(\infty), X_i(\infty)\rangle \\
&=& \int_0^\infty(\langle[\xi(t),T_1(t)],X_0(t)\rangle  - \langle [\xi(t),T_0(t)],X_1(t)\rangle -\langle [\xi(t),T_3(t)],X_2(t)\rangle +\langle [\xi(t),T_2(t)],X_3(t)\rangle \\
& &  + \frac{d}{dt}(\langle\xi(t),X_1(t)\rangle -\langle t\dot \xi(\infty),\delta_1\rangle) - \langle\xi(t),\dot X_1(t)\rangle) dt +  b\langle\dot\xi(\infty), \delta_1\rangle \\
&=& \int_0^\infty \langle \xi(t), -\dot X_1-[T_0(t),X_1(t)] -[X_0(t),T_1(t)] +[T_2(t),X_3(t)]+[X_2(t),T_3(t)]\rangle dt\\
& & +(\langle\xi(t),X_1(t)\rangle- \langle t\dot\xi(\infty), \delta_1\rangle)|_0^\infty  + b\langle\dot\xi(\infty), \delta_1\rangle
\end{eqnarray*}
We show that 
$$(\langle\xi(t),X_1(t)\rangle- \langle t\dot\xi(\infty), \delta_1\rangle)|_0^\infty = -b\langle\dot\xi(\infty), \delta_1\rangle.$$
Since the term on the lefthand side vanishes at $t=0$, we only have to deal with its limit as $t\to\infty$. Using $\xi(t) = -b\dot\xi(\infty) + t\dot\xi(\infty) +\tilde\xi$ we may rewrite this as follows:
$$\lim_{t\to\infty}(\langle\xi(t),X_1(t)\rangle- \langle t\dot\xi(\infty), \delta_1\rangle) = \lim_{t\to\infty}(\langle t\dot\xi(\infty), X_1-\delta_1\rangle) + \langle \tilde\xi(\infty),\delta_1  \rangle -b\langle\dot\xi(\infty),\delta_1\rangle.$$
Since 
$$X_1-\delta_1 = \mathrm O((1+t)^{-(1+\zeta)}),$$ the first term satisfies 
$$ \langle t\dot\xi(\infty), X_1-\delta_1\rangle = O((1+t)^{-\zeta}),$$ 
and thus converges to zero. The second term 
$$\langle \tilde\xi(\infty),\delta_1\rangle$$ 
vanishes by lemma \ref{algrel}, as $\tilde\xi(\infty) \in [\cf,\cf]$. Thus, 
$$b\langle\dot\xi(\infty), \delta_1\rangle +(\langle\xi(t),X_1(t)\rangle- \langle t\dot\xi(\infty), \delta_1\rangle)|_0^\infty = -b\langle\dot\xi(\infty), \delta_1\rangle+b\langle\dot\xi(\infty), \delta_1\rangle =0.$$
Hence,
$$\omega_I(X^\xi_T,X) = \int_0^\infty \langle \xi(t), -\dot X_1-[T_0(t),X_1(t)] -[X_0(t),T_1(t)] +[T_2(t),X_3(t)]+[X_2(t),T_3(t)]\rangle dt,$$
and a calculation analogous to the one in the proof of lemma \ref{momentdual} shows that this integral is well-defined.

In other words 
$$d\mu_I(T) (X) = \frac{d}{d\theta}|_{\theta = 0} \mu_I(T +\theta X) = -\dot X_1-[T_0(t),X_1(t)] -[X_0(t),T_1(t)] +[T_2(t),X_3(t)]+[X_2(t),T_3(t)],$$
from which the proposition follows.
\end{proof}

\begin{remark}
Note that in the above proof the assumption that $\tilde\xi(\infty) \in [\mathfrak c,\mathfrak c]$ is only needed to ensure that the terms $\langle\tilde\xi(\infty),\delta_1\rangle$ vanish. For the terms of the form $[\tilde\xi(\infty),\tau_i]$ to vanish we only need to assume $\tilde\xi(\infty) \in\mathfrak c$.

Note also that for the calculation $X^\xi_T(\infty) =\lim_{t\to\infty}(-\dot\xi +[\xi,T_0], ([\xi,T_i])_{i=1}^3) = (-\dot\xi(\infty),0,0,0)$ we only need to assume that $\tilde\xi(\infty) \in \cf$. 
\end{remark}

The object we would like to study is the hyperk\"ahler quotient of $(\tilde{\mathcal A}_{C,\sigma}, \Vert\cdot\Vert_{B,b})$ by the action of the gauge group $\tilde{\mathcal G}_{[C,C]\cap C(\sigma)}(b)$, which we denote by $\tilde{\mathcal Q}_{C,\sigma}(b)$. 
\index{$\tilde{\mathcal Q}_{C,\sigma}(b)$} 

Formally, the tangent space of $\tilde{\mathcal Q}_{C,\sigma}(b)$ at a smooth point $T$ can be identified with the orthogonal complement to the tangent space of the $\tilde{\mathcal G}_{[C,C]\cap C(\sigma)}(b)$-orbit through $T$ inside the tangent space of all solutions to the Nahm equations in $\tilde{\mathcal A}_{C,\sigma}$. That is, $T_T\tilde{\mathcal Q}_{C,\sigma}(b)$ can be described by elements $X\in T_T\tilde{\mathcal A}_{C,\sigma}$ solving the following system of equations
\begin{eqnarray*}
\dot X_1 + [T_0,X_1] + [X_0,T_1] -[T_2,X_3] - [X_2,T_3] &=& 0\\
\dot X_2 + [T_0,X_2] + [X_0,T_2]- [T_3,X_1] - [X_3,T_1] &=& 0\\
\dot X_3 + [T_0,X_3] + [X_0,T_3] - [T_1,X_2] - [X_1,T_2] &=& 0\\
\\
\dot X_0 +\sum_{i=0}^3 [T_i,X_i]&=& 0.\\
\end{eqnarray*}
The first three equations are just the linearised Nahm equations, while the equation at the bottom says that $X$ is perpendicular to the $\tilde{\mathcal G}_{[C,C]\cap C(\sigma)}(b)$-orbit through $T$. This can be seen as follows. Let $\xi\in\mathrm{Lie}(\tilde{\mathcal G}_{[C,C]\cap C(\sigma)}(b))$. Then we have seen in equation \ref{fundvectorfields} that 
$$X^\xi_T = (-\dot\xi + [\xi,T_0], ([\xi,T_i])_{i=1}^3),$$
and 
$$X^\xi_T(\infty) = (-\dot\xi(\infty), 0,0,0).$$
Now let $X = X_{\delta} + \tilde X\in T_T\tilde{\mathcal A}_{C,\sigma}$ and compute using integration by parts:
\begin{eqnarray*}
\langle X^\xi_T,X\rangle_{B,b} &=& b\langle -\dot\xi(\infty), \delta_0\rangle + \int_0^\infty \left(\langle-\dot\xi(t), X_0(t)\rangle -\langle-\dot\xi(\infty), \delta_0 \rangle+ \sum_{i=0}^3\langle[\xi(t),T_i(t)], X_i(t)\rangle\right) dt\\
&=& -b\langle\dot\xi(\infty), \delta_0\rangle + (-\langle\xi(t),X_0(t)\rangle + \langle t\dot\xi(\infty), \delta_0\rangle)|_0^\infty + \int_0^\infty\langle \dot X_0(t) +\sum_{i=0}^3[T_i(t),X_i(t)], \xi\rangle dt.
\end{eqnarray*}
Writing $\xi(t) = t\dot\xi(\infty) -b\dot\xi(\infty)+\tilde\xi$ as usual, and using $\xi(0) = 0$, a calculation almost identical to the one in the proof of Proposition \ref{propmomentmaps} shows 
$$ -b\langle\dot\xi(\infty), \delta_0\rangle + (-\langle\xi,X_0\rangle + \langle t\dot\xi(\infty), \delta_0\rangle)|_0^\infty  =0.$$
Thus, 
$$\langle X^\xi_T,X\rangle_{B,b} = \int_0^\infty\langle \dot X_0 +\sum_{i=0}^3[T_i,X_i], \xi\rangle dt,$$
as desired.

\subsubsection{The Torus Action on the Moduli Space $\tilde{\mathcal Q}_{C,\sigma}(b)$}

On $\tilde{\mathcal Q}_{C,\sigma}(b)$ we have an action of the gauge group $\tilde{\mathcal G}_{C\cap C(\sigma)}(b)$, which preserves the hyperk\"ahler structure.

Of course, $\tilde{\mathcal G}_{C\cap C(\sigma)}(b)$ preserves the hyperk\"ahler structure on $\tilde{\mathcal A}_{C,\sigma}$ and since $\tilde{\mathcal G}_{[C,C]\cap C(\sigma)}(b)$ is a normal subgroup of $\tilde{\mathcal G}_C(b)$ (it is the kernel of the map $\tilde{\mathcal G}_{C\cap C(\sigma)}(b) \to C\cap C(\sigma)/(C(\sigma)\cap[C,C])$ induced by evaluation of $\tilde u$ at infinity, see the notation used in definition of $\tilde{\mathcal G}_G(b)$, Definition \ref{tildeG}), the action of $\tilde{\mathcal G}_{C(\sigma)}(b)$ induces an action of $C\cap C(\sigma)/([C,C]\cap C(\sigma))$ on the moduli space $\tilde{\mathcal Q}_{C,\sigma}(b)$. From the proof of \ref{propmomentmaps} we can read off the moment map of the $C\cap C(\sigma)/([C,C]\cap C(\sigma))$-action.

\begin{corollary}\label{Tmoment}
The moment map for the $C\cap C(\sigma)/([C,C]\cap C(\sigma))$-action on $\tilde{\mathcal Q}_{C,\sigma}(b)$ is evaluation at infinity: 
\begin{eqnarray*}
\mu_I(T) &=& T_1(\infty), \\
\mu_J(T) &=& T_2(\infty), \\
\mu_K(T) &=& T_3(\infty). \\
\end{eqnarray*}
\end{corollary}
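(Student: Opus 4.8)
The plan is to obtain the formula by revisiting the computation in the proof of Proposition \ref{propmomentmaps}, since the residual action is induced by the same gauge action. First I would identify the Lie algebra of the residual group $C\cap C(\sigma)/([C,C]\cap C(\sigma))$. The Lie algebra of $C\cap C(\sigma)$ consists of those $\eta\in\cf$ with $[\eta,\sigma_i]=0$; decomposing via the orthogonal splitting $\cf = Z(\cf)\oplus[\cf,\cf]$ and using that $Z(\cf)$ automatically centralises $\sigma$, this is $Z(\cf)\oplus(\,[\cf,\cf]\cap\ker\mathrm{ad}(\sigma)\,)$, with the second summand being the Lie algebra of $[C,C]\cap C(\sigma)$. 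Hence the quotient Lie algebra is canonically $Z(\cf)$, and I would represent $\zeta\in Z(\cf)$ by a gauge transformation $\xi\in\mathrm{Lie}(\tilde{\mathcal G}_{C\cap C(\sigma)}(b))$ with $\dot\xi(\infty)=0$ and $\xi(\infty)=\tilde\xi(\infty)=\zeta$.

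Next I would recompute $\omega_I(X^\xi_T,X)$ for such $\xi$ and for a tangent vector $X\in T_T\tilde{\mathcal Q}_{C,\sigma}(b)$, that is a solution of the linearised Nahm equations orthogonal to the gauge orbit. Because $\zeta\in Z(\cf)$ commutes with each $\tau_i$ and $\dot\xi(\infty)=0$, the fundamental vector field has $X^\xi_T(\infty)=0$; in particular $(IX^\xi_T)_i(\infty)=0$, so the $b$-boundary term of the Bielawski form drops out entirely. Performing the same integration by parts as in Proposition \ref{propmomentmaps}, the bulk contribution reorganises into $\int_0^\infty\langle\xi,\,-\dot X_1-[T_0,X_1]-[X_0,T_1]+[T_2,X_3]+[X_2,T_3]\rangle\,dt$, and the only surviving boundary term is $\langle\xi(t),X_1(t)\rangle\big|_0^\infty=\langle\zeta,\delta_1\rangle$. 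The essential new point, flagged already in the remark following Proposition \ref{propmomentmaps}, is that $\langle\tilde\xi(\infty),\delta_1\rangle$ no longer vanishes, precisely because $\zeta$ lies in $Z(\cf)$ rather than $[\cf,\cf]$; this boundary term is what produces the nontrivial moment map.

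The key simplification is that the bulk integrand equals $\langle\xi,\,d\mu_I(T)(X)\rangle$, which vanishes since $X$ solves the linearised first Nahm equation on the moduli space. Thus $\omega_I(X^\zeta,X)=\langle\zeta,\delta_1\rangle=\langle\zeta,X_1(\infty)\rangle$, and as $X_1(\infty)$ is the derivative of $T\mapsto T_1(\infty)$ along $X$, this equals $d\langle\zeta,T_1(\infty)\rangle(X)$. Integrating gives $\langle\mu_I,\zeta\rangle = \langle\zeta,T_1(\infty)\rangle$, so that $\mu_I(T)=T_1(\infty)$, and identically $\mu_J(T)=T_2(\infty)$, $\mu_K(T)=T_3(\infty)$. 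I expect the only genuine care to be required in two routine checks: that $T_i(\infty)=\tau_i\in Z(\cf)$ is invariant under the normal subgroup $\tilde{\mathcal G}_{[C,C]\cap C(\sigma)}(b)$, whose elements act at infinity through $[C,C]$ and hence fix $Z(\cf)$ pointwise, so that the formula descends to $\tilde{\mathcal Q}_{C,\sigma}(b)$; and that the resulting expression carries no $b$-dependence, in keeping with the clean evaluation-at-infinity answer.
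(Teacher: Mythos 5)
Your proposal is correct and follows essentially the same route as the paper: both reuse the integration-by-parts computation from Proposition \ref{propmomentmaps}, observe that the bulk integral vanishes on solutions of the linearised Nahm equations, and identify the moment map with the surviving boundary term $\langle\tilde\xi(\infty),\delta_1\rangle$, which is now nonzero precisely because $\tilde\xi(\infty)$ is no longer confined to $[\cf,\cf]$. Your explicit identification of the quotient Lie algebra with $Z(\cf)$ and the choice of representatives with $\dot\xi(\infty)=0$ is just a slightly more spelled-out version of what the paper does implicitly.
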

\begin{proof}
We give the proof just for $\mu_I$. Most of the work has already been done in the proof of Proposition \ref{propmomentmaps}. Let $T\in \tilde{\mathcal Q}_{C,\sigma}(b)$ and let $X = X_{\delta} + \tilde X \in T_T\tilde{\mathcal Q}_{C,\sigma}(b)$. Recall that we found 
\begin{eqnarray*}
\omega_I(X^\xi_T,X) &=& \langle IX^\xi_T,X\rangle\\
&=& \int_0^\infty \langle \xi(t), -\dot X_1-[T_0(t),X_1(t)] -[X_0(t),T_1(t)] +[T_2(t),X_3(t)]+[X_2(t),T_3(t)]\rangle dt\\
& & +(\langle\xi(t),X_1(t)\rangle- \langle t\dot\xi(\infty), \delta_1\rangle)|_0^\infty  + b\langle\dot\xi(\infty), \delta_1\rangle
\end{eqnarray*}
Since $X$ solves the linearised Nahm equations, the integral in the second line of the above calculation vanishes. We have also seen in the proof of \ref{propmomentmaps} that
$$(\langle\xi(t),X_1(t)\rangle- \langle t\dot\xi(\infty), \delta_1\rangle)|_0^\infty  + b\langle\dot\xi(\infty), \delta_1\rangle = \langle \tilde\xi(\infty),\delta_1\rangle.$$
As we now allow $\tilde\xi(\infty)\in\cf(\tau,\sigma)$ rather than $[\cf,\cf]\cap\cf(\tau,\sigma)$,  we end up with
$$d\mu_T(X)(\xi)= \omega_I(X^\xi_T,X) =  \langle\tilde\xi(\infty),X_1(\infty)\rangle,$$
as desired.
\end{proof}

As remarked earlier, the centraliser subgroup $C$ is connected since we assume that $K$ is simply-connected.
Hence $C$ is isomorphic to a commuting local direct product $Z_0(C)[C,C]$ where
$Z_0(C)$ is the identity component of $Z(C)$ 
This is only a local direct product, as in general $Z_0(C)$ and $[C,C]$ will have a non-trivial finite intersection. Now $T$
is a maximal torus in $K$, hence it is also maximal in $C$, and so it
contains $Z_0(C)$. Let $t = \exp(\tau) \in T$. We
can write any $\tau = \tau^0 + \tau^1$ with $\tau^0\in Z(\cf)$ and $\tau^1\in \tf\cap[\cf,\cf]$ so that $t$ is the  product $t = t_0 t_1$ with $t_0 = \exp(\tau^0)\in
Z_0(C)$ and $t_1=\exp(\tau^1)\in [C,C]$. In particular $t_0$ lies in $C\cap
C(\sigma)$. So we can make $T$ act via taking the $Z_0$-component and
projecting this to $C\cap C(\sigma)/([C,C]\cap C(\sigma))$. Due to the potentially non-trivial intersection of $Z_0(C)$ and $C_{ss}$ the element $t_0$ is not unambiguously defined in general. Observe however that the ambiguity lies in $[C,C]$ and so that all  choices of $t_0$ project to the same element in the quotient. In the
case $\sigma =0$ this is equivalent to the usual map $T\to C \to
C/[C,C]$.

We thus obtain an (in general non-effective) action of the 
maximal torus $T = \exp{\mathfrak t}$ on $\tilde{\mathcal Q}_{C,\sigma}(b)$,
the moment map of which is again evaluation at $\infty$. Note that the
metric on the hyperk\"ahler quotient
$\tilde{\mathcal Q}_{C,\sigma}(b) \hkq T$ induced by
$\Vert\cdot\Vert_{B,b}$ is just the $L^2$-metric, as the level set of the
hyperk\"ahler moment map associated with the $T$-action consists of
solutions to the Nahm equations with $\tau$ fixed. We shall then prove
in $\S 5$:
\begin{theorem} \label{thm4.11}
Suppose $\tau$ is Biquard-regular, i.e. $C=C(\tau_2,\tau_3)$. Then the hyperk\"ahler quotient  $\tilde{\mathcal Q}_{C,\sigma}(b) \hkq_\tau T$ gives a stratum of the Kostant variety defined by $\tau$, i.e. a coadjoint orbit with semi-simple part given by $\tau$ and nilpotent part defined by $\sigma$.
\end{theorem}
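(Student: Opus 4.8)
The plan is to evaluate the iterated quotient by reduction in stages, to reduce it to one of the Nahm moduli problems solved by Kronheimer, Biquard and Kovalev, and then to invoke their identification of such moduli spaces with complex coadjoint orbits. First I would carry out the reduction in stages. Since $\tilde{\mathcal G}_{[C,C]\cap C(\sigma)}(b)$ is a normal subgroup of $\tilde{\mathcal G}_{C\cap C(\sigma)}(b)$ with quotient $Q := (C\cap C(\sigma))/([C,C]\cap C(\sigma))$, and since the local product decomposition $C = Z_0(C)[C,C]$ shows that the $T$-action of Corollary \ref{Tmoment} surjects onto $Q$ through its $Z_0(C)$-component, the iterated reduction $\tilde{\mathcal Q}_{C,\sigma}(b)\hkq_\tau T$ agrees with the single hyperk\"ahler quotient of $\tilde{\mathcal A}_{C,\sigma}$ by $\tilde{\mathcal G}_{C\cap C(\sigma)}(b)$. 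In the combined level set one imposes, in addition to the vanishing of the Nahm moment map of Proposition \ref{propmomentmaps}, the condition $T_i(\infty)=\tau_i$ for $i=1,2,3$ from the torus moment map; this is precisely the set of solutions to Nahm's equations in $\tilde{\mathcal A}_{C,\sigma}$ asymptotic to the model solution $T_{\tau,\sigma}$ with the given $\tau$ and $\sigma$. I would make this identification precise, keeping careful track of the level at which the (non-effective) torus moment map is taken.

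Next I would remove the residual $b$-dependence. Using the elements of $\tilde{\mathcal G}_{C\cap C(\sigma)}(b)$ with $\dot\xi(\infty)\in Z(\cf)$, that is the $(t-b)\dot\xi(\infty)$ terms, I would gauge-transform any solution so that $T_0(\infty)=\tau_0=0$; the stabiliser of this condition is the subgroup $\mathcal G_{C\cap C(\sigma)}\subset\tilde{\mathcal G}_{C\cap C(\sigma)}(b)$ with $s(u)=0$, consisting of gauge transformations asymptotic at infinity to elements of $C\cap C(\sigma)$. Thus the quotient becomes the space of solutions to Nahm's equations asymptotic to $T_{\tau,\sigma}$ modulo $\mathcal G_{C\cap C(\sigma)}$; equivalently it is the collapsing of $\mathcal N_{C,\sigma}/\mathcal G_1$ by $C\cap C(\sigma)$ with $\tau$ fixed, exactly as anticipated in \S3. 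Since $\tau$ is now fixed, every tangent vector to the level set vanishes at infinity, so the Bielawski form $||\cdot||_{B,b}$ restricts to the $L^2$ metric and the resulting hyperk\"ahler structure is independent of $b$.

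Finally I would identify this moduli space with the orbit. The Biquard-regularity hypothesis $C=C(\tau_2,\tau_3)$ lets me choose the complex structures so that $\tau$ is Biquard-regular; then $K_\C/C_\C$ is a closed semisimple orbit and the Fredholm theory of \cite{Biquard:1996}, \cite{Kovalev:1996} applies, the weights $\zeta$ and $\eta$ having been chosen via Corollary \ref{eigenval} to satisfy Biquard's constraints. Passing to the complex description for $I$, one sets $\beta = T_2+iT_3$; the holomorphic part $\mu_J+i\mu_K=0$ of the Nahm equations says that $\beta$ is covariant constant for the complex connection $\frac{d}{dt}+\mathrm{ad}(T_0-iT_1)$, so its $\kf_\C$-conjugacy class is constant in $t$, with semisimple part conjugate to $\tau_2+i\tau_3$ and nilpotent contribution the nilpositive element $\sigma_2+i\sigma_3$ of the image of $\rho$. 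The boundary data then descend to a bijection onto the adjoint orbit of $(\tau_2+i\tau_3)+(\sigma_2+i\sigma_3)$, whose semisimple part is $\tau$ and whose nilpotent part is $\sigma$, and by \cite{Biquard:1996}, \cite{Kovalev:1996} (and \cite{Kronheimer:1990} in the regular semisimple case $\sigma=0$) this bijection is an isometry of hyperk\"ahler spaces, which would complete the proof.

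The reduction in stages is essentially formal, so I expect the main obstacle to lie in the analytic matching of the last step. Our moduli problem is set up with the weighted spaces $\Omega_\zeta(\cf)\oplus\Omega_{\exp}(\cf^\perp)$ and the gauge group $\mathcal G_{C\cap C(\sigma)}$, whereas \cite{Biquard:1996} works in his own weighted Sobolev framework; the crux is to show that the two moduli problems coincide, namely that within Biquard's admissible range of weights the moduli space and its hyperk\"ahler metric are independent of the precise decay rates and that the gauge-fixing slices agree, so that his smoothness and identification results transfer verbatim to our setting. The transversality needed to treat the non-semisimple orbits, that is the case $\sigma\neq0$, is exactly where the constraint that $\zeta$ be smaller than the first positive eigenvalue of the Casimir $\gamma(\sigma)$ is used.
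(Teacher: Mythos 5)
Your proposal follows essentially the same route as the paper: reduction in stages to the quotient by $\tilde{\mathcal G}_{C\cap C(\sigma)}(b)$ at level $\tau$, gauging $\tau_0$ to zero so the Bielawski form collapses to the $L^2$ metric, and then identification with Biquard's moduli space and hence the coadjoint orbit through $(\tau_2+i\tau_3)+(\sigma_2+i\sigma_3)$. The "analytic matching" you correctly single out as the crux — reconciling the exponential-decay spaces $\Omega_{\exp;\eta}(\cf^\perp)$ with Biquard's polynomial weights $C^0_{\zeta+2}(\cf^\perp)$ — is precisely what the paper carries out in \S 5.2.1 via its Ascoli-type compactness lemmas, showing $\nabla_T^*\nabla_T$ is a compact perturbation of the model operator so that Biquard's Fredholm theory transfers.
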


\begin{remark} \label{comparison2}
In Remark \ref{comparison} we will consider a modified version of ${\mathcal Q}_{C,\sigma}$ where the $\SU(2)$ triple $\sigma$ is not fixed but instead varies in a $T$-orbit of $\SU(2)$-triples. Theorem \ref{thm4.11} will still hold for this modification with $T$ acting in the natural way.
\end{remark}

\subsubsection{The Action of $K$ on $\tilde{\mathcal Q}_{C,\sigma}(b)$}
We have an action of $K$ on each stratum $\tilde{\mathcal Q}_{C,\sigma}(b)$ given by gauge transformations which are no longer constrained to be the identity at $t=0$ but otherwise satisfy the same asymptotic conditions as elements of $\tilde{\mathcal G}_{[C,C]\cap C(\sigma)}(b)$. This action clearly preserves the hyperk\"ahler structure and we can calculate the associated hyperk\"ahler moment map. 

\begin{lemma}
The hyperk\"ahler moment map $\mu$ for the $K$-action on $\tilde{\mathcal Q}_{C,\sigma}(b)$ is given by  
$$\mu(T_0,T_1,T_2,T_3) = (-T_1(0),-T_2(0),-T_3(0)).$$
\end{lemma}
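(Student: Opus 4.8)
The plan is to recycle the moment map computation of Proposition \ref{propmomentmaps}, using that the $K$-action is induced by the group $\mathcal K$ of gauge transformations obeying the asymptotic conditions defining $\tilde{\mathcal G}_{[C,C]\cap C(\sigma)}(b)$ but with the constraint at $t=0$ dropped. Evaluation at the origin gives a surjection $\mathcal K\to K$, $u\mapsto u(0)$, whose kernel is exactly $\tilde{\mathcal G}_{[C,C]\cap C(\sigma)}(b)$, so $K\cong\mathcal K/\tilde{\mathcal G}_{[C,C]\cap C(\sigma)}(b)$ acts on $\tilde{\mathcal Q}_{C,\sigma}(b)$. I would first record that since $u(0)=1$ for $u\in\tilde{\mathcal G}_{[C,C]\cap C(\sigma)}(b)$ we have $T_i(0)\mapsto\mathrm{Ad}(u(0))T_i(0)=T_i(0)$, so each $T_i(0)$ descends to a well-defined $\kf$-valued function on $\tilde{\mathcal Q}_{C,\sigma}(b)$, transforming by $\mathrm{Ad}(k)$ under the residual $K$-action exactly as an equivariant moment map should.

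To compute $\mu_I$, I would represent $\zeta\in\kf=\mathrm{Lie}(K)$ by a compactly supported path $\xi\colon[0,\infty)\to\kf$ with $\xi(0)=\zeta$; this is a legitimate generator, with $\xi(\infty)=0$ and $\dot\xi(\infty)=0$, and the associated fundamental vector field is $X^\xi_T=(-\dot\xi+[\xi,T_0],[\xi,T_1],[\xi,T_2],[\xi,T_3])$. Applying $d\mu_I(\zeta)(X)=\omega_I(X^\xi_T,X)=\langle IX^\xi_T,X\rangle_{B,b}$ for $X\in T_T\tilde{\mathcal Q}_{C,\sigma}(b)$ and recalling $IX=(X_1,-X_0,-X_3,X_2)$, the compact support of $\xi$ kills the $b$-boundary term and the subtraction of the values at infinity in the Bielawski form, so the pairing collapses to the honest integral $\int_0^\infty\sum_i\langle(IX^\xi_T)_i,X_i\rangle\,dt$.

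The key step is then an integration by parts on the $\langle\dot\xi,X_1\rangle$ summand combined with repeated use of the invariance identity $\langle[a,b],c\rangle=\langle a,[b,c]\rangle$ of the bi-invariant metric. The boundary term at $t=\infty$ vanishes by compact support, while at $t=0$ it contributes precisely $-\langle\zeta,X_1(0)\rangle$; after pulling $\xi$ out of every bracket the remaining integrand becomes $\langle\xi,\,-\dot X_1-[T_0,X_1]-[X_0,T_1]+[T_2,X_3]+[X_2,T_3]\rangle$, which is the negative of the first linearised Nahm equation and hence vanishes identically for $X$ tangent to the hyperk\"ahler quotient. Thus $d\mu_I(\zeta)(X)=-\langle\zeta,X_1(0)\rangle$, giving $\mu_I(T)=-T_1(0)$ under the metric identification $\kf\cong\kf^*$, and the identical argument for $J$ and $K$ yields $\mu_J(T)=-T_2(0)$ and $\mu_K(T)=-T_3(0)$.

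The one point requiring care is independence of the answer from the representative path $\xi$: two lifts of the same $\zeta$ differ by an element of $\mathrm{Lie}(\tilde{\mathcal G}_{[C,C]\cap C(\sigma)}(b))$, so their fundamental vector fields differ by a gauge direction, which pairs to zero against $X\in T_T\tilde{\mathcal Q}_{C,\sigma}(b)$ under $\omega_I$. I expect this to be the only genuine subtlety; it is mild, because choosing $\xi$ compactly supported near the origin removes all of the delicate interaction between $\dot\xi(\infty)$ and the parameter $b$ that had to be tracked in Proposition \ref{propmomentmaps}, and leaves only the clean boundary contribution at $t=0$.
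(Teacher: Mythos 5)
Your proof is correct and is at heart the same computation as the paper's: both recycle the integration by parts from Proposition \ref{propmomentmaps}, use the linearised Nahm equations to kill the integral term, and read off the moment map from the boundary contribution at $t=0$. The one genuine difference is how you handle the behaviour at infinity. The paper takes an arbitrary element $\xi$ of the enlarged gauge group (so possibly $\dot\xi(\infty)\neq 0$ and $\tilde\xi(\infty)\in[\cf,\cf]\cap\cf(\sigma)$ nonzero) and reuses the cancellation of the $t=\infty$ boundary term against $b\langle\dot\xi(\infty),\delta_1\rangle$ already established in Proposition \ref{propmomentmaps}; the answer then visibly depends only on $\xi(0)$, so well-definedness is automatic. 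You instead pick a compactly supported lift of $\zeta\in\kf$, which trivialises all the $t=\infty$ analysis and the $b$-term, but obliges you to check independence of the lift, which you do correctly: two lifts differ by an element of $\mathrm{Lie}(\tilde{\mathcal G}_{[C,C]\cap C(\sigma)}(b))$, and $\omega_I(X^\xi_T,X)=d\mu_I(\xi)(X)$ vanishes for such $\xi$ when $X$ satisfies the linearised Nahm equations. The trade is a slightly cleaner boundary computation against a slightly longer well-definedness argument; both are sound and give $\mu_I(T)=-T_1(0)$, with the $J$ and $K$ cases identical.
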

\begin{proof}
Again, we give the proof just for $\mu_I$. The argument is analogous to the one in the proof of Proposition \ref{propmomentmaps}. Let $T\in \tilde{\mathcal Q}_{C,\sigma}(b)$ and let $X = X_{\delta} + \tilde X \in T_T\tilde{\mathcal Q}_{C,\sigma}(b)$. Recall that we found in the proof of Proposition \ref{propmomentmaps}
\begin{eqnarray*}
\omega_I(X^\xi_T,X) &=& \langle IX^\xi_T,X\rangle_B\\
&=& \int_0^\infty \langle \xi(t), -\dot X_1-[T_0(t),X_1(t)] -[X_0(t),T_1(t)] +[T_2(t),X_3(t)]+[X_2(t),T_3(t)]\rangle dt\\
& & +(\langle\xi(t),X_1(t)\rangle- \langle t\dot\xi(\infty), \delta_1\rangle)|_0^\infty  + b\langle\dot\xi(\infty), \delta_1\rangle\\
\end{eqnarray*}
Since $X$ solves the linearised Nahm equations, we end up with
$$ d\mu_T(X)(\xi)=\omega_I(X^\xi_T,X) = (\langle\xi(t),X_1(t)\rangle- \langle t\dot\xi(\infty), \delta_1\rangle)|_0^\infty  + b\langle\dot\xi(\infty), \delta_1\rangle.$$
Now since $\tilde\xi(\infty)\in[\cf,\cf]$, but we no longer assume $\xi(0) = 0$,  the calculation in the proof of Proposition \ref{propmomentmaps} shows that 
$$\omega_I(X^\xi_T,X) = -\langle\xi(0),X_1(0)\rangle.$$
This implies the desired result.
\end{proof}

\subsection{Equivalence of $\tilde{\mathcal Q}_{C,\sigma}(b)$ and $\mathcal Q_{C,\sigma}(b)$}

We recall that $\mathcal Q_{C,\sigma}(b)$ is the quotient by
 ${\mathcal G}_{[C,C]\cap C(\sigma)}$ of the set of solutions to Nahm's equations in $\mathcal A_{C}$ with fixed
$\lie{su}(2)$-triple $\sigma$, with Bielawski metric $\Vert\cdot\Vert_{B,b}$.
 We shall now produce an equivalence between this space and the hyperk\"ahler
quotient $\tilde{\mathcal Q}_{C,\sigma}(b)$.

The starting point is the observation, that given $T\in\tilde{\mathcal A}_{C,\sigma}$, we can always find a gauge transformation $u \in \tilde{\mathcal G}_{[C,C]\cap C(\sigma)}(b)$ such that $u.T$ is asymptotic to some model solution $T_{\tau,\sigma}$, i.e one with $\tau_0 =0$.

\begin{lemma}
Let $T = T_{\tau_0, \tau,\sigma} + \tilde T \in\tilde{\mathcal A}_{C,\sigma}$. Then there exists a gauge transformation $u \in \tilde{\mathcal G}_{[C,C]\cap C(\sigma)}(b)$ such that $u.T$ is asymptotic to $T_{\tau,\sigma} = T_{0,\tau,\sigma}$. 
\end{lemma}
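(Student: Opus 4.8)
The plan is to remove the constant $\tau_0 \in Z(\cf)$ from the time-component $T_0$ by a gauge transformation taking values in the central torus $Z_0(C) = \exp(Z(\cf))$. Since $\tau_0$ commutes with all of $\cf$, such a transformation acts trivially on $\tau_i$ and $\sigma_i$, so it leaves the pair $(\tau,\sigma)$ unchanged and only affects $T_0$ together with the lower-order terms in $\Omega_\zeta(\cf)\oplus\Omega_{\exp}(\cf^\perp)$. This is exactly the freedom we need, since passing from $\tilde{\mathcal A}_{C,\sigma}$ to a model solution with $\tau_0=0$ should require nothing more than adjusting the asymptotic behaviour of $T_0$.

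Concretely, I would set $u(t) = \exp(f(t)\,\tau_0)$ for a smooth scalar function $f$ to be chosen. Because $\tau_0$ is central in $\cf$, the path $u$ is valued in the abelian group $Z_0(C)$, so $\dot u u^{-1} = \dot f(t)\,\tau_0$ and $\mathrm{Ad}(u)$ fixes $\tau_i$ and $\sigma_i$. The gauge action then gives
$$(u.T)_0 = \mathrm{Ad}(u)(\tau_0+\tilde T_0) - \dot f(t)\,\tau_0 = (1-\dot f(t))\,\tau_0 + \mathrm{Ad}(u)\tilde T_0,$$
so to remove the constant term it suffices to arrange $\dot f(t)\to 1$, with $\dot f - 1$ decaying fast enough to lie in $\Omega_\zeta(\cf)$.

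The one genuinely delicate point, and the step I expect to require care, is matching the $b$-twisted asymptotic condition in the definition of $\tilde{\mathcal G}_{[C,C]\cap C(\sigma)}(b)$: one needs $s(u)\in Z(\cf)$ together with $\tilde u(\infty) = \exp(b\,s(u))\bigl(\lim_{t\to\infty}\exp(-t\,s(u))u(t)\bigr) \in [C,C]\cap C(\sigma)$. With $u = \exp(f\tau_0)$ one computes $s(u) = \lim_{t\to\infty}\dot f(t)\,\tau_0 = \tau_0 \in Z(\cf)$ and $\tilde u(\infty) = \exp\bigl((b+\lim_{t\to\infty}(f(t)-t))\tau_0\bigr)$. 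Hence I must choose $f$ with $f(0)=0$, $\dot f(t)\to 1$, and $\lim_{t\to\infty}(f(t)-t) = -b$; for instance $f(t)=t-b(1-e^{-t})$ does the job, yielding $\dot f - 1 = -b\,e^{-t}\in\Omega_{\exp}(\cf)\subset\Omega_\zeta(\cf)$, vanishing $H$-part, and $\tilde u(\infty)=1\in [C,C]\cap C(\sigma)$. This verifies $u\in\tilde{\mathcal G}_{[C,C]\cap C(\sigma)}(b)$.

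Finally I would record that $u.T\in\tilde{\mathcal A}_{C,\sigma}$ by Lemma \ref{groupaction2}, that $(u.T)_i = \tau_i + \frac{\sigma_i}{2(t+1)} + \mathrm{Ad}(u)\tilde T_i$ remains asymptotic to $\tau_i+\frac{\sigma_i}{2(t+1)}$ (as $\mathrm{Ad}(u)$ preserves $\Omega_\zeta(\cf)$ and $\Omega_{\exp}(\cf^\perp)$, being valued in the compact abelian group $Z_0(C)$), and that the new time-component $(u.T)_0 = b\,e^{-t}\tau_0 + \mathrm{Ad}(u)\tilde T_0$ has vanishing constant term. Thus $u.T$ is asymptotic to $T_{0,\tau,\sigma}=T_{\tau,\sigma}$, as required.
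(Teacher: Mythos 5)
Your proposal is correct and follows essentially the same route as the paper: both construct the gauge transformation $u(t)=\exp(\xi(t))$ with $\xi(t)=(t-b+be^{-ct})\tau_0$ (you take $c=1$), verify membership in $\tilde{\mathcal G}_{[C,C]\cap C(\sigma)}(b)$ via $s(u)=\tau_0$ and $\tilde u(\infty)=1$, and use centrality of $\tau_0$ to see that only $T_0$ is affected up to $\mathrm{Ad}(u)$ of the decaying remainder. No gaps.
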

\begin{proof}
Let $T = (T_0,T_1,T_2,T_3) = T_{\tau_0,\tau,\sigma} + \tilde T\in\tilde{\mathcal A}_{C,\sigma}$. Then we know that we can write $T_0 = \tau_0 + \tilde T_0$ with $\tau_0 \in Z(\cf)$  and $\tilde T_i \in \Omega_\zeta(\cf)\oplus\Omega_{\exp}(\cf^\perp)$ for $i=0,1,2,3$. To find the required gauge transformation $u\in\tilde{\mathcal G}_{[C,C]\cap C(\sigma)}(b)$ that gauges $\tau_0$ to zero, we make an ansatz in the form 
$$u(t) = \exp(\xi(t)),$$
for a suitable $\xi\in \mathrm{Lie}(\tilde{\mathcal G}_{[C,C]\cap C(\sigma)}(b))$, which we take to be $\xi(t) = (t-b+ be^{-ct})\tau_0$ for some $c>0$. We check that this defines an element in $\mathrm{Lie}(\tilde{\mathcal G}_{[C,C]\cap C(\sigma)}(b))$. First of all, $\xi(0) = 0$ and $\dot\xi(\infty) = \tau_0\in Z(\cf)$. Now 
$$\tilde\xi(\infty) = b\dot\xi(\infty) + \lim_{t\to\infty}(\xi(t)-t\dot\xi(\infty)) = b\tau_0 + \lim_{t\to\infty}((-b+be^{-ct})\tau_0) = 0.$$  
Moreover, 
$$\dot\xi^H \equiv 0\qquad \dot\xi-\dot\xi(\infty) = -bc e^{-ct}\tau_0 = \mathrm O(e^{-ct}).$$
Thus, $\xi\in\mathrm{Lie}(\tilde{\mathcal G}_{[C,C]\cap C(\sigma)}(b))$, in fact it lies in $\mathrm{Lie}(\tilde{\mathcal G}_1(b))$, as $\tilde\xi(\infty) =0$. 

Now, noting that $u$ is $Z(C)$-valued and so commutes with the $\tau_i$ and $\sigma_i$, we calculate
\begin{eqnarray*}
u.T&=& \left(\tau_0 +\mathrm{Ad}(u(t))(\tilde T_0) - (1-cbe^{-ct})\tau_0, \left(\tau_i + \frac{\sigma_i}{2(t+1)} + \mathrm{Ad}(u(t))(\tilde T_i)\right)_{i=1}^3\right) \\
&=& T_{\tau,\sigma} + (cbe^{-ct}\tau_0 + \mathrm{Ad}(u)\tilde T_0, (\mathrm{Ad}(u)\tilde T_0)_{i=1}^3),
\end{eqnarray*}
as desired.
\end{proof}

\begin{corollary}
The moduli space $\mathcal Q_{C,\sigma}(b)= (\mathcal N_{C,\sigma}, \Vert\cdot\Vert_{B,b})/\mathcal G_{[C,C]\cap C(\sigma)}$ defined at the end of $\S 3$
may be identified with  the hyperk\"ahler quotient $\tilde{\mathcal Q}_{C,\sigma}(b) = (\tilde{\mathcal A}_{C,\sigma},  \Vert\cdot\Vert_{B,b}) \hkq \tilde{\mathcal G}_{[C,C]\cap C(\sigma)}(b)$.
\end{corollary}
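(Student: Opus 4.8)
The plan is to realise both sides as quotients of the same set of Nahm solutions and to exhibit an explicit bijection induced by the inclusion $\mathcal A_{C,\sigma}\hookrightarrow\tilde{\mathcal A}_{C,\sigma}$ as the slice $\{\tau_0=0\}$. By Proposition \ref{propmomentmaps} the vanishing of the hyperk\"ahler moment map $\mu$ is equivalent to Nahm's equations, so
$$\tilde{\mathcal Q}_{C,\sigma}(b)=\tilde{\mathcal N}_{C,\sigma}/\tilde{\mathcal G}_{[C,C]\cap C(\sigma)}(b),$$
where $\tilde{\mathcal N}_{C,\sigma}=\mu^{-1}(0)$ denotes the solutions of Nahm's equations inside $\tilde{\mathcal A}_{C,\sigma}$. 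Since $\mathcal A_{C,\sigma}=\{T\in\tilde{\mathcal A}_{C,\sigma}\mid\tau_0=0\}$ and, by the Remark following the definition of $\tilde{\mathcal G}_G(b)$, the group $\mathcal G_{[C,C]\cap C(\sigma)}$ sits inside $\tilde{\mathcal G}_{[C,C]\cap C(\sigma)}(b)$ as the subgroup with $s(u)=0$, the gauge action preserves solutions and the inclusion descends to a well-defined map
$$\Phi:\mathcal Q_{C,\sigma}(b)\longrightarrow\tilde{\mathcal Q}_{C,\sigma}(b).$$
I would then show that $\Phi$ is a bijection.

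Surjectivity is immediate from the preceding lemma: given $[T]\in\tilde{\mathcal Q}_{C,\sigma}(b)$ with $T\in\tilde{\mathcal N}_{C,\sigma}$, that lemma supplies $u\in\tilde{\mathcal G}_{[C,C]\cap C(\sigma)}(b)$ (in fact in $\tilde{\mathcal G}_1(b)$) such that $u.T$ is asymptotic to $T_{\tau,\sigma}$, i.e.\ $u.T\in\mathcal A_{C,\sigma}$; as the gauge action preserves Nahm's equations, $u.T\in\mathcal N_{C,\sigma}$ and $\Phi[u.T]=[T]$.

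For injectivity, suppose $T,T'\in\mathcal N_{C,\sigma}$ and $T'=u.T$ for some $u\in\tilde{\mathcal G}_{[C,C]\cap C(\sigma)}(b)$. The key computation is to examine the limit of the zeroth component: since $T_0\to 0$ (because $\tau_0=0$) and $\dot u u^{-1}\to s(u)\in Z(\cf)$, while $\mathrm{Ad}(u)$ preserves the norm, we obtain
$$(u.T)_0=\mathrm{Ad}(u)(T_0)-\dot u u^{-1}\longrightarrow -s(u)\in Z(\cf)\subset\cf.$$
Hence the limiting value $\tau_0'$ of $(u.T)_0^D$ equals $-s(u)$, and the hypothesis $T'\in\mathcal A_{C,\sigma}$ forces $\tau_0'=0$, so $s(u)=0$. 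By the description of $\mathcal G_{[C,C]\cap C(\sigma)}$ as the $s(u)=0$ subgroup we then have $u\in\mathcal G_{[C,C]\cap C(\sigma)}$, and therefore $[T]=[T']$ already in $\mathcal Q_{C,\sigma}(b)$.

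Finally I would check that $\Phi$ is compatible with the geometric structures, so that it is an isometry of (stratified pseudo-)hyperk\"ahler spaces and not merely a set bijection. The point here is that the gauge directions generated by $\dot\xi(\infty)\in Z(\cf)$, namely those deforming $\tau_0$, are orthogonal for $||\cdot||_{B,b}$ to the slice $\{\tau_0=0\}$, so that the metric induced on $\tilde{\mathcal Q}_{C,\sigma}(b)$ by hyperk\"ahler reduction restricts on this slice to the Bielawski form defining $\mathcal Q_{C,\sigma}(b)$, while the complex structures $I,J,K$ and the residual torus action transport correctly because $\mathrm{Ad}(u)$ commutes with them. I expect this last compatibility---rather than the bijection itself, which follows cleanly from the preceding lemma together with the short limit computation above---to be the main point requiring care, since it is precisely where the choice of slice and the role of the parameter $b$ in $\tilde{\mathcal G}_G(b)$ interact.
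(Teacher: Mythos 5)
Your argument is correct and follows essentially the same route as the paper: surjectivity of the map induced by the slice $\{\tau_0=0\}$ comes from the preceding gauging lemma, and injectivity from the observation that a gauge transformation in $\tilde{\mathcal G}_{[C,C]\cap C(\sigma)}(b)$ preserving $\tau_0=0$ must have $s(u)=0$ and hence lie in $\mathcal G_{[C,C]\cap C(\sigma)}$. Your explicit limit computation for $(u.T)_0$ merely spells out what the paper leaves as a parenthetical remark about the residual gauge freedom.
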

\begin{proof}
Let $T\in \tilde{\mathcal A}_{C,\sigma}$ be a solution to the Nahm equations. The above Lemma shows that we can use a gauge transformation $u\in \tilde{\mathcal G}_{[C,C]\cap C(\sigma)}(b)$ to gauge away $\tau_0$. The resulting solution $T' = u.T$ lies in $\mathcal N_{C,\sigma}$ as defined at the end of $\S 3$ and the remaining gauge freedom by which we have to quotient is just $\mathcal G_{C(\sigma)\cap [C,C]}$ as defined in $\S 3$ (Recall that that for any $b$ we have $\mathcal G_G = \{u\in \tilde{\mathcal G}_G(b)\ | \ s(u) = \lim_{t\to\infty} \dot uu^{-1} = 0\}$).
\end{proof}
Let $\mathcal Q_{C,\tau,\sigma}$, 
\index{$\mathcal Q_{C,\tau,\sigma}$} 
respectively $\mathcal N_{C,\tau,\sigma}$, 
\index{$\mathcal N_{C,\tau,\sigma}$}
denote the subsets given by elements in $\mathcal Q_{C,\sigma}$, respectively $\mathcal N_{C,\sigma}$, asymptotic to a fixed triple $\tau$ with $C(\tau) = C$. Then the same argument as in the proof of the above Corollary together with the moment map calculation for the $T$-action on $\tilde{\mathcal Q}_{C,\sigma}(b)$ gives the following result:

\begin{corollary}
  The moduli space $(\mathcal Q_{C,\tau, \sigma}, \Vert\cdot\Vert_{B,b})/T =
  (\mathcal N_{C,\tau,\sigma}, \Vert\cdot \Vert_{B,b})/\mathcal G_{C\cap
    C(\sigma)}$ may be identified with the hyperk\"ahler quotient
  $(\tilde{\mathcal Q}_{C,\sigma},  \Vert\cdot \Vert_{B,b}) \hkq T = (\tilde{\mathcal
    A}_{C,\sigma},  \Vert\cdot \Vert_{B,b}) \hkq \tilde{\mathcal
    G}_{C\cap C(\sigma)}(b)$.
\end{corollary}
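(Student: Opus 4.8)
The plan is to follow the gauge-fixing argument of the preceding corollary, adding the torus reduction supplied by the moment-map computation of Corollary~\ref{Tmoment}. Recall that $\tilde{\mathcal G}_{[C,C]\cap C(\sigma)}(b)$ is normal in $\tilde{\mathcal G}_{C\cap C(\sigma)}(b)$, with quotient $C\cap C(\sigma)/([C,C]\cap C(\sigma))$ acting on $\tilde{\mathcal Q}_{C,\sigma}(b)$. I would therefore compute $\tilde{\mathcal A}_{C,\sigma}\hkq\tilde{\mathcal G}_{C\cap C(\sigma)}(b)$ by reduction in stages: reducing first by the normal subgroup yields $\tilde{\mathcal Q}_{C,\sigma}(b)$, and the residual reduction is by $C\cap C(\sigma)/([C,C]\cap C(\sigma))$. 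By Corollary~\ref{Tmoment} the moment map for this residual action is evaluation at infinity, so imposing level $\tau$ cuts out $\{[T]:T_i(\infty)=\tau_i\}$, which under the identification $\tilde{\mathcal Q}_{C,\sigma}(b)=\mathcal N_{C,\sigma}/\mathcal G_{[C,C]\cap C(\sigma)}$ of the previous corollary is exactly $\mathcal Q_{C,\tau,\sigma}$.

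The key comparison is that this residual reduction coincides with reduction by the maximal torus $T$. Since $K$ is simply connected, $C$ is connected and equals the local direct product $Z_0(C)[C,C]$; as $\sigma\subset[\cf,\cf]$ and $Z_0(C)$ centralises $\cf$, any $g\in C\cap C(\sigma)$ written as $g=zh$ with $z\in Z_0(C)$, $h\in[C,C]$ has $h=z^{-1}g$ centralising $\sigma$, so $C\cap C(\sigma)=Z_0(C)\cdot([C,C]\cap C(\sigma))$. Because $T$ is maximal in $C$ it contains $Z_0(C)$, and hence the projection $T\to C\cap C(\sigma)/([C,C]\cap C(\sigma))$ used to define the torus action is surjective already through $Z_0(C)$. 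Consequently quotienting $\mathcal Q_{C,\tau,\sigma}$ by $T$ agrees with quotienting by the residual group, the (at most finite) kernel of the $T$-action acting trivially. This yields simultaneously $\tilde{\mathcal Q}_{C,\sigma}(b)\hkq_\tau T=\tilde{\mathcal A}_{C,\sigma}\hkq\tilde{\mathcal G}_{C\cap C(\sigma)}(b)$ and, on the Nahm side, $\mathcal Q_{C,\tau,\sigma}/T=\mathcal N_{C,\tau,\sigma}/\mathcal G_{C\cap C(\sigma)}$.

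To connect the two descriptions I would rerun the gauge-fixing lemma verbatim: any Nahm solution in $\tilde{\mathcal A}_{C,\sigma}$ asymptotic to $\tau$ can be moved by an element of $\tilde{\mathcal G}_1(b)\subset\tilde{\mathcal G}_{C\cap C(\sigma)}(b)$ to one with $\tau_0=0$, landing in $\mathcal N_{C,\tau,\sigma}$; this gauge transformation is $Z(C)$-valued, hence fixes $\tau$ and $\sigma$. After this normalisation the only surviving freedom is the subgroup with $s(u)=0$, namely $\mathcal G_{C\cap C(\sigma)}$, so the level-$\tau$ quotient $\tilde{\mathcal A}_{C,\sigma}\hkq\tilde{\mathcal G}_{C\cap C(\sigma)}(b)$ is identified with $\mathcal N_{C,\tau,\sigma}/\mathcal G_{C\cap C(\sigma)}$. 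Since $\tau$ is now fixed every tangent vector to the level set vanishes at infinity, so the Bielawski form reduces to the $L^2$ metric and the identification respects the induced metrics.

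The step requiring the most care is the comparison of the residual gauge reduction with the genuine torus reduction: the bookkeeping identity $C\cap C(\sigma)=Z_0(C)\cdot([C,C]\cap C(\sigma))$, the matching of the residual and $T$-moment maps (both evaluation at infinity), and the control of the non-effective kernel of the $T$-action. Once this algebraic comparison is settled, the analytic input is entirely provided by the gauge-fixing lemma and Corollary~\ref{Tmoment}, and the remaining verifications are routine.
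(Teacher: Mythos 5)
Your proposal is correct and follows essentially the same route as the paper, which proves this corollary by invoking the gauge-fixing lemma from the previous corollary together with the moment-map computation of Corollary~\ref{Tmoment}. The extra algebraic bookkeeping you supply (the identity $C\cap C(\sigma)=Z_0(C)\cdot([C,C]\cap C(\sigma))$ and the surjectivity of $T\to C\cap C(\sigma)/([C,C]\cap C(\sigma))$ through $Z_0(C)$) is exactly what the paper establishes in the paragraph where the (non-effective) $T$-action is defined, so your write-up is a faithful, slightly more detailed version of the paper's argument.
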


Thus, the constructions of $\S 3$ and $\S 4$ result in the same moduli spaces. But the compatible hypercomplex structure is only visible in the setting used in $\S 4$.

\section{The Relation of $(\mathcal Q_{C,\tau,\sigma}, \Vert\cdot\Vert_{B,b})/T$ and $\tilde{\mathcal Q}_{C,\sigma}(b) \hkq T$ with Coadjoint Orbits}
We now want to relate the hyperk\"ahler torus quotients of the moduli space $\tilde{\mathcal Q}_{C,\sigma}(b)$ of solutions to the Nahm equations, asymptotic to some model solution $T_{\tau_0,\tau,\sigma}$ with $C(\tau) = C$ and equipped with metric $\Vert\cdot\Vert_{B,b}$, to Kostant varieties. 

Our aim is to show that these torus reductions give coadjoint orbits. For this, we have to prove that our setup is essentially equivalent to the one used by Biquard to construct hyperk\"ahler metrics on coadjoint orbits.

\subsection{Nahm's Equations and Complex Adjoint Orbits}
We start by  briefly reviewing the results in \cite{Kronheimer:1990},\cite{Biquard:1996},\cite{Kovalev:1996}, where the existence of hyperk\"ahler structures on coadjoint orbits of semi-simple complex Lie groups was established.   We mainly follow Biquard's treatment. Given fixed triples $\tau$ and $\sigma$ with associated model solution $T_{\tau,\sigma} = \left(0,\left(\tau_i+\frac{\sigma_i}{2(t+1)}\right)_{i=1}^3\right)$ as before, we consider
\begin{equation}\label{ABiquard}
\index{$\mathcal A_{\tau,\sigma}^{Biquard}$}
\mathcal A_{\tau,\sigma}^{Biquard} = \{T_{\tau,\sigma} + \tilde T \ | \tilde T_i^D \in \Omega_\zeta(\cf), \tilde T_i^H, \dot{\tilde T_i}^H\in C^0_{2+\zeta}(\cf^\perp)\}.
\end{equation}
Here we introduced the space 
\begin{equation}\label{C0alpha} 
\index{$C^0_\alpha$}
C^0_{\alpha}(\cf^\perp) = \{f: [0,\infty)\to \cf^\perp\ | \ \Vert f\Vert_\alpha = \sup |(1+t)^{\alpha}f|<\infty\},
\end{equation} 
where $\alpha$ can be an arbitrary real number, in our case $\alpha = 2+\zeta$.
As we have now fixed the limiting triple $\tau$, tangent vectors to $\mathcal A_{\tau,\sigma}^{Biquard}$ have no  $\delta_i$-terms or $\epsilon_i$-terms. Thus, the complex structures as well as the usual $L^2$-metric are well-defined on $\mathcal A_{\tau,\sigma}^{Biquard}$. Recall that in this setting the $L^2$ metric agrees with the Bielawski metric restricted to the tangent space $T\mathcal A_{\tau,\sigma}^{Biquard}$.  This space is acted upon by the gauge group $\mathcal G_{C\cap C(\sigma)}^{Biquard}$, 
\index{$\mathcal G_{C\cap C(\sigma)}^{Biquard}$}\index{$\mathrm{Lie}(\mathcal G_{C\cap C(\sigma)}^{Biquard})$} 
whose Lie algebra is given by 
$$\mathrm{Lie}(\mathcal G_{C\cap C(\sigma)}^{Biquard}) = \{\xi:[0,\infty) \to \kf  \ | \ \xi(\infty) \in\cf\cap\cf(\sigma), \dot \xi^D \in \Omega_\zeta(\cf), \xi^H,\dot\xi^H,\ddot\xi^H \in C^0_{\zeta+2}(\cf^\perp)\},$$
It is useful to decompose $\kf$ as follows:
$$\kf = \cf\cap\cf(\sigma) \oplus \cf\cap(\cf(\sigma)^\perp) \oplus \cf^\perp.$$
We will write $\xi\in\kf$ as $\xi = \xi_0^D  + \xi_1^D + \xi^H$. 
The natural norm on $\mathrm{Lie}(\mathcal G_{C\cap C(\sigma)}^{Biquard})$ is given by 
\index{$\Vert \xi\Vert_{\mathcal G;Biquard}$}
$$\Vert \xi\Vert_{\mathcal G;Biquard} =  \Vert\dot\xi_0^D\Vert_{\Omega_{\zeta}(\cf\cap\cf(\sigma))} + \Vert\xi_1^D\Vert_{C^2_\zeta(\cf\cap\cf(\sigma)^\perp)} + \Vert\xi^H\Vert_{2+\zeta} + \Vert\dot\xi^H\Vert_{2+\zeta} + \Vert\ddot\xi^H\Vert_{2+\zeta},$$
where we used the notation $\Vert\xi\Vert_{C^2_\zeta} = \Vert\xi\Vert_\zeta + \Vert\dot \xi\Vert_{1+\zeta} + \Vert\ddot \xi\Vert_{2+\zeta}.$ 
\index{$\Vert\xi\Vert_{C^2_\zeta}$}

Biquard then showed that the moduli space of solutions to the Nahm equations 
\index{$\mathcal M_{\tau,\sigma,\zeta}^{Biquard}$}
$$\mathcal M_{\tau,\sigma,\zeta}^{Biquard} = \{T \in\mathcal A_{\tau,\sigma}^{Biquard}\ | \ \mu(T) = 0\}/\mathcal G_{C\cap C(\sigma)}^{Biquard}$$
is a smooth hyperk\"ahler manifold diffeomorphic (as a complex symplectic manifold for the complex structure $I$) to the coadjoint orbit through 
$\tau_2+i\tau_3 + \sigma_2 +i\sigma_3$, provided the triple $\tau$ satisfies the condition of Biquard regularity, i.e. $C(\tau_2,\tau_3)= C$.

\subsection{The Relation between $\tilde{\mathcal Q}_{C,\sigma}(b) \hkq T$ and $\mathcal M_{\tau,\sigma}^{Biquard}$}
We have observed before, that the hyperk\"ahler reduction
$\tilde{\mathcal Q}_{C,\sigma}(b) \hkq T$ at level $\tau\in\tf\otimes \R^3$ is the same as the quotient of the space of solutions to the Nahm equations in $\tilde{\mathcal A}_{C,\sigma}(b)$ asymptotic to the fixed triple $\tau$ (and arbitrary $\tau_0$) by the group $\tilde{\mathcal G}_{C\cap C(\sigma)}(b)$. By the results of $\S 4.4.$ this is the same as the quotient by $\mathcal G_{C\cap C(\sigma)}$ of the space $(\mathcal N_{C,\tau,\sigma}, \Vert\cdot\Vert_{L^2})$ consisting of solutions with $\tau_0=0$ converging to $\tau$. We have discussed before that for any such solution to the Nahm equations  the exponential decay rate $\eta$ of the $T_i^H$'s depends only on the triple $\tau$. Hence, the moduli space $\mathcal N_{C,\tau,\sigma}/\mathcal G_{C\cap C(\sigma)}$ is the same as the space $\mathcal N_{C,\tau,\sigma; \eta}/\mathcal G_{C\cap C(\sigma);\eta}$, where we replace $\Omega_{\exp}$ by $\Omega_{\exp;\eta}$ for suitable $\eta$ in the respective definitions. 
\index{$\mathcal N_{C,\tau,\sigma; \eta}$}

On the other hand, we see from \cite{Biquard:1996}, Propositions 2.2. and 3.1. that in Biquard's gauge theoretic setup, one can equally well replace the condition $\tilde T_i^H, \dot{\tilde T_i}^H\in C^0_{\zeta+2}(\cf^\perp)$ by $ \tilde T_i^H\in\Omega_{\exp;\eta}(\cf^\perp)$ for $\eta$ sufficiently small. 

To make this more precise, we need some compactness lemmas, which we state and prove in $\S \ref{analyticdetails}$ below.

These lemmas then imply that it is enough to prove that for a model solution $T_{\tau,\sigma}$ the operator 
$$\nabla_{\tau,\sigma}^*\nabla_{\tau,\sigma}: \mathrm{Lie}(\mathcal G_{C\cap C(\sigma);\eta}) \to C^0_{2+\zeta}(\cf) \oplus C^0_{e^{\eta t}}(\cf^\perp)$$
is an isomorphism. Here $C^0_{e^{\eta t}}(\cf^\perp)$ denotes the space of continous functions on the half-line such that the norm $\Vert f\Vert_{e^{\eta t}} = \sup (e^{\eta t}|f(t)|)$ is finite. This can be done following the proof of Proposition 3.2. 
of \cite{Biquard:1996} with minor modifications on the $\cf^\perp$-part (the proof on the $\cf$-part still works, as our setup is the same as Biquard's on the $\cf$-components). 
For the $\cf^\perp$-part, in Biquard's notation this means $\lambda,\mu,\nu$ not all equal to zero, our compactness results imply (just like in Biquard's proof) that the situation reduces to that of Kronheimer treated in the proof of \cite{Kronheimer:1990}, Propositions 3.8, 3.9. 

Altogether we obtain:

\begin{proposition}
The moduli space $(\mathcal M_{\tau,\sigma,\zeta}^{Biquard}, \Vert\cdot\Vert_{L^2})$ 
may be identified with the hyperk\"ahler quotient $\tilde{\mathcal Q}_{C,\sigma}(b) \hkq T$ at level $\tau$. 
\end{proposition}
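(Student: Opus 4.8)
The plan is to show that both spaces arise as the quotient of the \emph{same} set of solutions to the Nahm equations by the \emph{same} gauge group, once one checks that the exponentially-weighted function spaces used here and Biquard's polynomially-weighted spaces define identical moduli spaces. First I would invoke the final Corollary of \S4.4: the hyperk\"ahler reduction $\tilde{\mathcal Q}_{C,\sigma}(b)\hkq T$ at level $\tau$ is identified with the quotient by $\mathcal G_{C\cap C(\sigma)}$ of $(\mathcal N_{C,\tau,\sigma},\|\cdot\|_{L^2})$, the space of solutions to the Nahm equations with $\tau_0=0$ converging to the fixed triple $\tau$. As already observed, fixing $\tau$ via the $T$-moment map forces the limiting terms of all tangent vectors to the level set to vanish, so the Bielawski form reduces to the genuine $L^2$ metric, matching the metric used in $\mathcal M^{Biquard}_{\tau,\sigma,\zeta}$. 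Thus the only thing left to reconcile is the choice of asymptotic function spaces governing the $\cf^\perp$-components of the Nahm data and of the gauge group.

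Next I would pass to a fixed exponential rate. Since the decay rate $\eta=\eta(\tau)$ of the $T_i^H$ is controlled purely by $\tau$, via the first positive eigenvalue of $\sum_i -\mathrm{ad}(\tau_i)^2$ as recalled in \S3, our moduli space is unchanged if we replace $\Omega_{\exp}$ by $\Omega_{\exp;\eta}$ throughout, giving $\mathcal N_{C,\tau,\sigma,\eta}/\mathcal G_{C\cap C(\sigma),\eta}$. On Biquard's side, Propositions 2.2 and 3.1 of \cite{Biquard:1996} allow the polynomial condition $\tilde T_i^H,\dot{\tilde T}_i^H\in C^0_{\zeta+2}(\cf^\perp)$ to be traded for $\tilde T_i^H\in\Omega_{\exp;\eta}(\cf^\perp)$ for $\eta$ small. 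The heart of the matter is then an existence-and-uniqueness-of-gauge statement: for a model solution $T_{\tau,\sigma}$, the operator $\nabla^*_{\tau,\sigma}\nabla_{\tau,\sigma}\colon\mathrm{Lie}(\mathcal G_{C\cap C(\sigma),\eta})\to C^0_{2+\zeta}(\cf)\oplus C^0_{e^{\eta t}}(\cf^\perp)$ is an isomorphism. Granting this, the compactness lemmas proved in the subsection below let one run Biquard's slice and transversality argument in the exponentially-weighted setting, identifying the two quotients as complex-symplectic, and hence hyperk\"ahler, manifolds; Biquard's identification of the result as the coadjoint orbit through $\tau_2+i\tau_3+\sigma_2+i\sigma_3$ then carries over unchanged.

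The hard part will be establishing that $\nabla^*_{\tau,\sigma}\nabla_{\tau,\sigma}$ is an isomorphism on these weighted spaces. On the $\cf$-component the analysis is literally Biquard's, since there our asymptotic conditions coincide with his, so the proof of Proposition 3.2 of \cite{Biquard:1996} applies verbatim. The delicate point is the $\cf^\perp$-component, where one has changed from polynomial to exponential weights: here $\mathrm{ad}(\tau_i)$ acts invertibly (Biquard's parameters $\lambda,\mu,\nu$ are not all zero), so the model operator is, up to lower-order terms, a constant-coefficient operator with a spectral gap, and the situation reduces to the one treated by Kronheimer in the proof of Propositions 3.8 and 3.9 of \cite{Kronheimer:1990}. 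The main work is to verify that the compactness lemmas supply the Fredholm estimates needed in the $\Omega_{\exp;\eta}$ and $C^0_{e^{\eta t}}$ scale, so that injectivity together with a parametrix argument yields surjectivity; once invertibility of $\nabla^*_{\tau,\sigma}\nabla_{\tau,\sigma}$ is in hand, the remaining identification is formal.
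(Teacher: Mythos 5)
Your proposal follows the paper's own argument essentially step for step: the reduction via the final Corollary of \S 4.4 to $\mathcal N_{C,\tau,\sigma}/\mathcal G_{C\cap C(\sigma)}$ with the $L^2$ metric, the passage to a fixed exponential rate $\eta(\tau)$, the appeal to Propositions 2.2 and 3.1 of Biquard to trade polynomial for exponential weights, and the identification of the key analytic step as the invertibility of $\nabla^*_{\tau,\sigma}\nabla_{\tau,\sigma}$ on the weighted spaces, with the $\cf$-part handled verbatim by Biquard's Proposition 3.2 and the $\cf^\perp$-part reduced via the compactness lemmas to Kronheimer's Propositions 3.8 and 3.9. This matches the paper's proof in both structure and detail.
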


\subsubsection{Analytic details}\label{analyticdetails}
As before, let $\alpha, \eta >0$ and denote by $C^0_{\alpha}$, respectively $C^0_{e^{\eta t}}$, 
\index{$C^0_{e^{\eta t}}$}
the spaces of continuous functions on the half-line such that the norm 
\index{$\Vert f\Vert_{\alpha}$} 
$\Vert f\Vert_{\alpha} = \sup ((1+t)^{\alpha}|f(t)|)$, respectively  
\index{$\Vert f\Vert_{e^{\eta t}}$} 
$\Vert f\Vert_{e^{\eta t}} = \sup (e^{\eta t}|f(t)|)$, is finite. It then follows from the asymptotic conditions defining $\mathrm{Lie}(\mathcal G_{C\cap C(\sigma);\eta})$ that for any solution $T\in\mathcal N_{C,\tau,\sigma;\eta}$ the operator $\nabla_T^*\nabla_T$ maps $\mathrm{Lie}(\mathcal G_{C\cap C(\sigma);\eta})$ to $C^0_{2+\zeta}(\cf)\oplus C^0_{e^{\eta t}}(\cf^\perp)$. 

Again we decompose $\kf$ as above,
$$\kf = \cf\cap\cf(\sigma) \oplus \cf\cap(\cf(\sigma)^\perp) \oplus \cf^\perp,$$
and write $\xi\in\kf$ as $\xi = \xi_0^D  + \xi_1^D + \xi^H$. 
We then define on $\mathrm{Lie}(\mathcal G_{C\cap C(\sigma);\eta})$ the following norm: 
$$\Vert\xi\Vert_\mathcal G = \Vert\dot\xi_0^D\Vert_{\Omega_\zeta(\cf\cap\cf(\sigma))} +\Vert\xi_1^D\Vert_{C^2_\zeta(\cf\cap\cf(\sigma)^\perp)}+\Vert\xi^H\Vert_{C^2_{e^{\eta t}}(\cf^\perp)},$$
where the norms $\Vert\cdot\Vert_{C^2_\zeta(\cf\cap\cf(\sigma^\perp)}$ and $\Vert\cdot\Vert_{C^2_{e^{\eta t}}(\cf^\perp)}$ are defined by
\begin{eqnarray*}
\Vert f\Vert_{C^2_\zeta} &=& \Vert f\Vert_\zeta + \Vert\dot f\Vert_{1+\zeta} + \Vert\ddot f\Vert_{2+\zeta}  \\
\Vert f\Vert_{C^2_{e^{\eta t}}} &=& \Vert f\Vert_{e^{\eta t}} + \Vert\dot f\Vert_{e^{\eta t}} + \Vert\ddot f\Vert_{e^{\eta t}}.
\end{eqnarray*}
Note that our setup agrees with the one used by Biquard on the $D$-parts, but on the $H$-part we replace the weight function $(1+t)^{2+\zeta}$ by $e^{\eta t}$.
 
Writing 
\index{$\Omega'_\zeta(\cf\cap\cf(\sigma))$} 
$\Omega'_\zeta(\cf\cap\cf(\sigma)) = \{\xi:[0,\infty)\to \cf\cap\cf(\sigma) | \Vert\dot\xi\Vert_{\Omega_\zeta}<\infty\}$, we get 
$$\mathrm{Lie}(\mathcal G_{C\cap C(\sigma);\eta}) = \{\xi\in \Omega'_\zeta(\cf\cap\cf(\sigma))\oplus C^2_\zeta(\cf\cap\cf(\sigma)^\perp) \oplus C^2_{e^{\eta t}}(\cf^\perp) | \ \xi(0)=0\}.$$
Our aim is to show that $\nabla_T^*\nabla_T$ is a compact perturbation of the operator $\nabla_{\tau,\sigma}^*\nabla_{\tau,\sigma}$ associated with the model solution $T_{\tau,\sigma}$. 

Note that for a solution $T = T_{\tau,\sigma} + \tilde T$ we can write $$\nabla_T^*\nabla_T\xi - \nabla_{\tau,\sigma}^*\nabla_{\tau,\sigma}\xi =  2[T_0,\dot \xi] + A\cdot\xi,$$
where $A$ is the linear operator given by
$$A\cdot\xi = [\dot T_0,\xi] + \sum_{i=1}^3 \mathrm{ad}(T_{\tau,\sigma;i}) (\mathrm{ad}(\tilde T_i )(\xi))+  \mathrm{ad} (\tilde T_i )(\mathrm{ad} (T_{\tau,\sigma; i})(\xi)) + (\mathrm{ad}(\tilde T_i))^2(\xi).$$ 
The following series of corollaries will imply that $\xi\mapsto  2[T_0,\dot \xi] + A\cdot\xi$ defines a compact operator $$\mathrm{Lie}(\mathcal G_{C\cap C(\sigma);\eta}) 
\to C^0_{2+\zeta}(\cf) \oplus C^0_{e^{\eta t}}(\cf^\perp).$$
With respect to this direct sum decomposition, we may view  $A + 2\mathrm{ad}(T_0)\circ \frac{d}{dt}$ as a $(2\times 3)$ block matrix and we check that each of its six block components defines a compact operator between the respective function spaces.

The analytic lemmas we need in order to prove this are all based on the following adaptation of Ascoli's theorem to the space $C^0_0(\kf)$ 
\index{$C^0_0(\kf)$}
of bounded continuous $\kf$-valued functions on the half-line equipped with the sup-norm.

\begin{lemma}\label{ascoli}
Let $\Phi\subset C^0_0(\kf)$ be a 
uniformly bounded family of continuous functions such that there exists a positive constant $C$ and a positive, 
monotonically decreasing continuous function $\lambda$ with $\lim_{t\to\infty}\lambda(t) =0$ such that  for any $f\in \Phi$ and $t>t'$ the following estimate holds:
$$|f(t)-f(t')|\leq C(\lambda(t') - \lambda(t)).$$
Then $\Phi\subset C^0_0(\kf)$ is relatively compact.
\end{lemma}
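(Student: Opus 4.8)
The plan is to deduce the statement from the classical Arzel\`a--Ascoli theorem on compact intervals, the only extra ingredient being a mechanism to promote locally uniform convergence on $[0,\infty)$ to genuinely uniform convergence; this is exactly what the decay hypothesis on $\lambda$ supplies. Since $C^0_0(\kf)$ is a metric space, it suffices to show that $\Phi$ is sequentially relatively compact, that is, that every sequence $(f_n)\subset\Phi$ admits a subsequence converging uniformly on $[0,\infty)$ to some $f\in C^0_0(\kf)$.

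First I would extract from the hypothesis the two uniform properties we need. For \emph{equicontinuity}: because $\lambda$ is continuous on $[0,\infty)$ with $\lim_{t\to\infty}\lambda(t)=0$, it extends to a continuous function on the one-point compactification and is therefore uniformly continuous on $[0,\infty)$; given $\epsilon>0$ we pick $\delta>0$ with $|\lambda(s)-\lambda(s')|<\epsilon/C$ whenever $|s-s'|<\delta$, and then for $t>t'$ with $t-t'<\delta$ the hypothesis gives $|f(t)-f(t')|\le C(\lambda(t')-\lambda(t))<\epsilon$ for every $f\in\Phi$ simultaneously. For \emph{uniform control of tails}: if $t>t'>R$ then $|f(t)-f(t')|\le C(\lambda(t')-\lambda(t))\le C\lambda(R)$, using that $\lambda$ is decreasing, so each $f\in\Phi$ satisfies the Cauchy criterion as $t\to\infty$, its limit $f(\infty):=\lim_{t\to\infty}f(t)$ exists, and letting $t\to\infty$ in the same estimate yields $\sup_{t>R}|f(t)-f(\infty)|\le C\lambda(R)$, a bound independent of $f$.

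Given a sequence $(f_n)\subset\Phi$ I would then run the standard diagonal argument: the family is uniformly bounded and equicontinuous on each $[0,N]$, $N\in\mathbb N$, so Arzel\`a--Ascoli yields subsequences converging uniformly on $[0,N]$, and a diagonal subsequence $(f_{n_k})$ converges uniformly on every compact subinterval to a continuous limit $f$. This $f$ is bounded, being a pointwise limit of a uniformly bounded family, so $f\in C^0_0(\kf)$, and passing to the limit in the hypothesis shows $f$ obeys the same estimate $|f(t)-f(t')|\le C(\lambda(t')-\lambda(t))$. To upgrade to global uniform convergence, fix $\epsilon>0$ and choose $R$ with $2C\lambda(R)<\epsilon/2$. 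For $t>R$ we estimate
\[
|f_{n_k}(t)-f(t)| \le |f_{n_k}(t)-f_{n_k}(R)| + |f_{n_k}(R)-f(R)| + |f(R)-f(t)| \le 2C\lambda(R) + \sup_{[0,R]}|f_{n_k}-f|,
\]
using the tail bound for $f_{n_k}$ and for $f$; combined with uniform convergence on $[0,R]$ this gives $\sup_{[0,\infty)}|f_{n_k}-f|<\epsilon$ for all large $k$. Hence $f_{n_k}\to f$ in $C^0_0(\kf)$, proving relative compactness.

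The substantive point, and the only place the specific hypothesis is used in an essential way, is the final tail estimate: on a compact interval equicontinuity and boundedness already give compactness, but on the half-line one must rule out mass escaping to infinity, and the monotone decay of $\lambda$ is precisely calibrated to force the uniform tail bound $\sup_{t>R}|f(t)-f(\infty)|\le C\lambda(R)$ that closes the argument.
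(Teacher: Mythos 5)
Your proof is correct, but it takes a genuinely different route from the paper. The paper follows Lang's proof of Ascoli's theorem directly and establishes \emph{total boundedness} of $\Phi$: the monotone decay of $\lambda$ is used to cover all of $[0,\infty)$ by \emph{finitely} many intervals $V(t_i)$ on which every $f\in\Phi$ oscillates by less than $\epsilon$ (the tail being a single such interval), after which a finite $\epsilon$-net for the relatively compact set $\bigcup_i\Phi(t_i)\subset\kf$ yields a finite cover of $\Phi$ by sets of diameter $<4\epsilon$. You instead argue \emph{sequentially}: classical Arzel\`a--Ascoli on each $[0,N]$ plus a diagonal extraction gives locally uniform convergence, and the hypothesis is used to produce the uniform tail bound $\sup_{t>R}|f(t)-f(\infty)|\le C\lambda(R)$, which upgrades locally uniform to globally uniform convergence. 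Both arguments are complete and use the two ingredients (equicontinuity from continuity of $\lambda$, tail control from its monotone decay to $0$) in essentially the same places; your version isolates the role of the decay hypothesis more transparently as a uniform-in-$f$ Cauchy condition at infinity, at the cost of invoking the compact-interval theorem and the equivalence of compactness with sequential compactness in the metric space $C^0_0(\kf)$, whereas the paper's covering argument is self-contained and delivers total boundedness directly. One small point worth making explicit in your write-up: relative compactness of $\Phi$ follows from sequential compactness of its closure, so you should note that the limits $f$ you construct lie in the complete ambient space $C^0_0(\kf)$ (which you do verify) and that closed subsets of a Banach space are complete.
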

\begin{proof}
The proof is based on the proof of Ascoli's theorem in (\cite{Lang:1983}, Theorem 3.1.).
The set $\Phi$ forms a uniformly bounded family of continuous functions.  We want to show that $\Phi$ is relatively compact in the space $C^0_0([0,\infty))$ of bounded continuous functions endowed with the $\sup$-norm. 

The inequality 
$$|f(t)-f(t')|\leq C(\lambda(t') - \lambda(t))$$
shows that $\Phi$ is equicontinuous. Moreover, since $\lambda(t)$ converges to zero monotonically as $t\to\infty$ it follows that given $\epsilon >0$, we can cover $[0,\infty)$ by \emph{finitely} many open intervals $V(t_i)$, $i=1,\dots,m$ such that for all $t\in V(t_i)$ we have $|f(t) - f(t_i)| < \epsilon$. Now consider the set $\Phi(t_i) = \{f(t_i)\ | \ f \in \Phi\}$, which is relatively compact in $\kf$. Therefore the (finite) union 
$$Y = \bigcup_{i=1}^m \Phi(t_i)$$
is also relatively compact in $\kf$ and we can cover it by finitely many open balls $B_\epsilon(\xi_j)$, $j=1,\dots,n$ of radius $\epsilon$ centered at $\xi_j$. 
Moreover, there is some map $\sigma: \{1,\dots,m\}\to\{1,\dots,n\}$ such that
for each $i$ we have $f(t_i) \in B_\epsilon(\xi_{\sigma(i)})$.
Therefore if we associate to such a map $\sigma$ the space $\Phi_\sigma$ of all elements $f\in \Phi$ such that $|f(t_i) - \xi_{\sigma(i)}|<\epsilon$
for each $i$, then $\Phi$ is covered by the open sets $\Phi_\sigma$. Finally, we see that if $f,g\in \Phi_\sigma$ and $t\in [0,\infty)$, then $t\in V(t_i)$ for some $i$ and 
$$|f(t)-g(t)| \leq |f(t)-f(t_i)| +|f(t_i)-\xi_{\sigma(i)}|+|\xi_{\sigma(i)}-g(t_i)| + |g(t_i)-g(t)| < 4\epsilon.$$
Thus, we can cover $\Phi$ by a finite number of open sets of prescribed diameter, i.e. $\Phi$ is relatively compact.
\end{proof}

The next corollary shows that the map $$\xi\mapsto  A\cdot\xi = 2[\dot T_0,\xi] + \sum_{i=1}^3 \mathrm{ad}(T_{\tau,\sigma;i}) (\mathrm{ad}(\tilde T_i )(\xi))+  \mathrm{ad} (\tilde T_i )(\mathrm{ad} (T_{\tau,\sigma; i})(\xi)) + (\mathrm{ad}(\tilde T_i))^2(\xi)$$ defines a compact operator. 

\begin{corollary}\label{Acompact}
\begin{enumerate}
\item Let $\zeta>0$ and let $f_\alpha\in \Omega_\alpha$ with $\alpha >1$. Then multiplication by $f_\alpha$ defines a compact operator $f_\alpha: C^2_\zeta\to C^0_{2+\zeta}$.  

\item Let $\eta>0, \zeta>0$ and let $f_\alpha\in C^1([0,\infty),\mathbb R)$ such that $\Vert e^{\eta t}f_\alpha\Vert_{C^0_{\alpha}}+ \Vert e^{\eta t}\dot f_\alpha\Vert_{C^0_{\alpha}}<\infty$, with $\alpha >1-\zeta$. Then multiplication by $f_\alpha$ defines a compact operator $f_\alpha: C^2_{\zeta} \to C^0_{e^{\eta t}}$.  

\item Let $\eta>0$ and let $f_\alpha\in \Omega_\alpha$ with $\alpha >1$. Then multiplication by $f_\alpha$ defines a compact operator $f_\alpha: C^2_{e^{\eta t}}\to C^0_{e^{\eta t}}$.  

\item Let $\eta>0, \zeta>0$ and let $f_\alpha\in \Omega_\alpha$, with $\alpha >1$. Then multiplication by $f_\alpha$ defines a compact operator $f_\alpha: C^2_{e^{\eta t}}\to C^0_{2+\zeta}$.  

\item Let $\eta>0$ and let $f_\alpha\in C^1([0,\infty),\mathbb R)$ such that $\Vert e^{\eta t}f_\alpha\Vert_{C^0_{\alpha}}+ \Vert e^{\eta t}\dot f_\alpha\Vert_{C^0_{\alpha}}<\infty$, with $\alpha >1$. Then multiplication by $f_\alpha$ defines a compact operator $f_\alpha: \Omega_{\zeta}' = \{\xi\in C^2([0,\infty),\kf) | \xi(0) = 0, \dot\xi\in\Omega_{\zeta}\}\to C^0_{e^{\eta t}}$.  

\item Let $\eta>0$ and let $f_\alpha\in C^1([0,\infty),\mathbb R)$ such that $\Vert f_\alpha\Vert_{\Omega_{\alpha}}<\infty$, with $\alpha >1+\zeta$. Then multiplication by $f_\alpha$ defines a compact operator $f_\alpha: \Omega_{\zeta}' = \{\xi\in C^2([0,\infty),\kf) | \xi(0) = 0, \dot\xi\in\Omega_{\zeta}\}\to C^0_{2+\zeta}$.  
\end{enumerate}
\end{corollary}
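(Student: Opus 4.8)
The plan is to treat all six parts by a single mechanism, reducing each to an application of Lemma~\ref{ascoli}. In every case the target space, either $C^0_{2+\zeta}(\cf)$ or $C^0_{e^{\eta t}}(\cf^\perp)$, carries a weight $w(t)$ --- namely $(1+t)^{2+\zeta}$ or $e^{\eta t}$ --- and multiplication by $w$ is an isometric isomorphism onto the space $C^0_0$ of bounded continuous functions with the sup-norm. Hence compactness of the multiplication operator $f_\alpha\colon X\to C^0_w$ is equivalent to showing that, for every bounded sequence $\{\xi_n\}$ in the source space $X$, the family $g_n:=w\cdot f_\alpha\cdot\xi_n$ is relatively compact in $C^0_0$. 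I would verify for $\Phi=\{g_n\}$ the two hypotheses of Lemma~\ref{ascoli}: uniform boundedness, and the modulus-of-continuity estimate $|g_n(t)-g_n(t')|\le C(\lambda(t')-\lambda(t))$ for a fixed positive, decreasing $\lambda$ with $\lambda\to 0$, where $C$ is independent of $n$.

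The heart of the argument is a single derivative estimate. Differentiating $g_n$ by the product rule produces three kinds of terms: one from differentiating the weight $w$, one involving $\dot f_\alpha$, and one involving $\dot\xi_n$. Each is bounded using the decay of $f_\alpha$ (encoded either by $f_\alpha\in\Omega_\alpha$ or by the $e^{\eta t}$-weighted sup-norm hypotheses) together with the source-space bounds on $\xi_n$ and $\dot\xi_n$. For instance in part~(1), with $|\xi_n|\le M(1+t)^{-\zeta}$, $|\dot\xi_n|\le M(1+t)^{-(1+\zeta)}$ and $f_\alpha\in\Omega_\alpha$, each of the three terms in $\dot g_n$ is $\mathrm O((1+t)^{-\alpha})$, so $|\dot g_n(t)|\le C'(1+t)^{-\alpha}$ uniformly in $n$; the uniform boundedness of $g_n$ follows from the same estimates. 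The precise inequality on $\alpha$ imposed in each part ($\alpha>1$, $\alpha>1-\zeta$, or $\alpha>1+\zeta$) is exactly the condition that makes the resulting bound on $|\dot g_n|$ of the form $C'\phi(t)$ with $\phi$ integrable on $[0,\infty)$. In parts~(5)--(6) one also uses that $\dot\xi_n\in\Omega_\zeta$ with $\zeta>0$ is integrable and $\xi_n(0)=0$, which forces $\{\xi_n\}$ to be uniformly bounded.

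Granting a bound $|\dot g_n(t)|\le C'\phi(t)$ with $\phi$ integrable and $n$-independent, I would set $\lambda(t)=\int_t^\infty\phi(s)\,ds$, which is positive, monotonically decreasing, and tends to $0$. Then for $t>t'$ the fundamental theorem of calculus gives $|g_n(t)-g_n(t')|\le\int_{t'}^t|\dot g_n|\le C'(\lambda(t')-\lambda(t))$, which is precisely the hypothesis of Lemma~\ref{ascoli}. In the polynomial-target cases $\lambda(t)$ is an explicit multiple of $(1+t)^{-(\beta-1)}$ with $\beta>1$, while in the exponential-target cases~(2), (3), (5) the factor $e^{\eta t}$ is absorbed into $g_n$, so that $e^{\eta t}f_\alpha$ again decays polynomially and the same form of $\lambda$ applies. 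Lemma~\ref{ascoli} then yields relative compactness of $\{g_n\}$ in $C^0_0$, hence of $\{f_\alpha\xi_n\}$ in the target, which is the asserted compactness of $f_\alpha$.

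The main obstacle is entirely book-keeping: getting the exponent in the derivative estimate right in each of the six cases, since the weight $w$, the source-space decay, and the hypotheses on $f_\alpha$ vary from part to part, and it is only the stated threshold on $\alpha$ that guarantees $|\dot g_n|$ is integrable (equivalently, that $\lambda$ decays). No idea beyond Lemma~\ref{ascoli} is required. It is worth noting that $g_n$ enters Lemma~\ref{ascoli} only through its value and first derivative, so that controlling only $f_\alpha$ and $\dot f_\alpha$ --- rather than $\ddot f_\alpha$ --- suffices, which is why the weaker hypotheses in parts~(2) and~(5) are enough.
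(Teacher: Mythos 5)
Your proposal is correct and follows essentially the same route as the paper: in each case one multiplies by the target weight, bounds the derivative of the resulting function by a uniform constant times an integrable $\phi(t)$ (the stated threshold on $\alpha$ being exactly the integrability condition), and applies Lemma~\ref{ascoli} with $\lambda(t)=\int_t^\infty\phi(s)\,ds$. The paper writes out only cases (1) and (5) and declares the rest analogous, which matches your uniform treatment, including the use of $\xi(0)=0$ and $\dot\xi\in\Omega_\zeta$ to get uniform boundedness in parts (5)--(6).
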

\begin{proof}
\begin{enumerate}
\item We have to show that the  $f_\alpha$ sends  the unit ball in $C^2_\zeta$ to a relatively compact set in $C^0_{2+\zeta}$. We thus have to check that $\Phi = \{(1+t)^{2+\zeta}f_\alpha \xi | \xi \in C^2_\zeta(\kf), \Vert\xi\Vert_{C^2_\zeta}\leq 1 \} \subset C^0_0$ satisfies the assumptions of Lemma \ref{ascoli}. Now
\begin{eqnarray*}
|(1+t)^{2+\zeta}f_\alpha(t)\xi(t) -(1+t')^{2+\zeta}f_\alpha(t')\xi(t')| &\leq& \int_{t'}^t \left|\frac{d}{ds} (1+s)^{2+\zeta}f_\alpha(s)\xi(s)\right| ds\\
&\leq& \int_{t'}^t |(2+\zeta)(1+s)^{1+\zeta}f_\alpha(s)\xi(s)| ds \\
& &+\int_{t'}^t|(1+s)^{2+\zeta}(\dot f_\alpha(s)\xi(s)+f_\alpha(s)\dot \xi(s))| ds\\
&\leq&  \int_{t'}^t |(2+\zeta)(1+s)f_\alpha(s)\Vert\xi\Vert_{C^2_\zeta}| ds \\
& &+\int_{t'}^t |(1+s)^{2}\dot f_\alpha(s)\Vert\xi\Vert_{C^2_\zeta}| + |(1+s)f_\alpha(s)\Vert\xi\Vert_{C^2_\zeta})| ds\\
&\leq& \int_{t'}^t |(3+\zeta)(1+s)f_\alpha(s)| + |(1+s)^{2}\dot f_\alpha(s)| ds\\
&\leq& (3+\zeta)\Vert f_\alpha\Vert_{\Omega_\alpha} \int_{t'}^t \frac{1}{(1+s)^\alpha}ds\\ 
&=&  C(\zeta,f_\alpha)\left(\frac{1}{(1+t')^{\alpha-1}}-\frac{1}{(1+t )^{\alpha-1}}\right),
\end{eqnarray*}
where $C = \frac{3+\zeta}{\alpha-1}\Vert f_\alpha\Vert_{\Omega_\alpha}$ is a constant depending only on $f_\alpha, \zeta$.  Thus, if $\alpha>1$, the compactness is implied by Lemma \ref{ascoli}.
\item This assertion is proved analogously by estimating $|e^{\eta t}f_\alpha(t)\xi(t) -e^{\eta t'}f_\alpha(t')\xi(t')|$ with $\xi\in C^2_\zeta$,   and applying Lemma \ref{ascoli}.  We omit the details.
\item This assertion is proved analogously by estimating $|e^{\eta t}f_\alpha(t)\xi(t) -e^{\eta t'}f_\alpha(t')\xi(t')|$ with $\xi\in C^2_{e^{\eta t}}$ and applying Lemma \ref{ascoli}.  We omit the details.
\item This assertion is proved analogously by estimating $|(1+t)^{2+\zeta}f_\alpha(t)\xi(t) -(1+t')^{2+\zeta}f_\alpha(t')\xi(t')|$ with $\xi\in C^2_{e^{\eta t}}$ and applying Lemma \ref{ascoli}.  We omit the details.
\item We again have to check that $\Phi = \{e^{\eta t}f_\alpha \xi | \xi \in \Omega_{\zeta}', \Vert \dot \xi\Vert_{\Omega_\zeta}\leq 1 \} \subset C^0_0$ satisfies the assumptions of Lemma \ref{ascoli}. We start by observing that since $\xi(0) =0$  and $\xi\in\Omega_\zeta'$, we may write 
$$\xi(t) = \int_0^t \dot \xi(s)ds,$$
from which we deduce that 
$$|\xi(t)| \leq \int_0^t|\dot \xi(s)|ds \leq \Vert\dot \xi\Vert_{\Omega_\zeta}\int_0^t \frac{1}{(1+s)^{1+\zeta}}ds = \frac{\Vert\dot \xi\Vert_{\Omega_\zeta}}{\zeta}\left(1-\frac{1}{(1+t)^\zeta}\right) \leq \frac{\Vert\dot \xi\Vert_{\Omega_\zeta}}{\zeta}.$$
We now calculate using $|\xi(t)| \leq \frac{\Vert\dot \xi\Vert_{\Omega_\zeta}}{\zeta} = \frac{1}{\zeta}$ and $|\dot \xi(t)| \leq \Vert\dot \xi\Vert_{\Omega_\zeta} = 1$:
\begin{eqnarray*}
|e^{\eta t}f_\alpha(t)\xi(t) -e^{\eta t'}f_\alpha(t')\xi(t')| &\leq& \int_{t'}^t \left|\frac{d}{ds} e^{\eta s}f_\alpha(s)\xi(s)\right| ds\\
&\leq& \int_{t'}^t \left|\eta e^{\eta s}f_\alpha(s)\xi(s)\right| + |e^{\eta s}\dot f_\alpha(s)\xi(s)| + |e^{\eta s}f_\alpha(s)\dot \xi(s))| ds\\
&\leq& \int_{t'}^t \left(1+ \frac{\eta}{\zeta}\right)|e^{\eta s}f_\alpha(s)| + \frac{1}{\zeta}|e^{\eta s}\dot f_\alpha(s)|ds \\
&\leq& \frac{\zeta +1+\eta}{\zeta}(\Vert e^{\eta t}f_\alpha\Vert_{C^0_{\alpha}}+ \Vert e^{\eta t}\dot f_\alpha\Vert_{C^0_{\alpha}})\int_{t'}^{t} \frac{1}{(1+s)^\alpha}ds.\\
&=&  C(\zeta,\eta,f_\alpha)\left(\frac{1}{(1+t')^{\alpha-1}}-\frac{1}{(1+t )^{\alpha-1}}\right).
\end{eqnarray*}
Thus, if $\alpha>1$, we may again apply Lemma \ref{ascoli}.

\item This assertion is proved similarly to assertion $(5)$ by estimating $|(1+t)^{2+\zeta}f_\alpha(t)\xi(t) -(1+t')^{2+\zeta}f_\alpha(t')\xi(t')|$ for $\xi\in\Omega_{\zeta}'$ and applying Lemma \ref{ascoli}.
\end{enumerate}
\end{proof}

The following two results show that $\xi\mapsto \mathrm{ad}(T_0)(\dot\xi)$ is the composition of a bounded and a compact operator, hence compact. 

\begin{corollary}
\begin{enumerate}
\item Let $f\in\Omega_{\alpha}$.
Then point-wise multiplication by $f_\alpha$ defines a compact operator $f_\alpha: \Omega_{\zeta}\to C^0_{2+\zeta}$ provided $\alpha>0$.
\item Let $f\in\Omega_{\alpha}$.
Then point-wise multiplication by $f_\alpha$ defines a compact operator \newline $f_\alpha: \Omega_{\exp;\eta}\to C^0_{e^{\eta t}}$ provided $\alpha>0$.
\item Let $f\in\Omega_{\alpha}$.
Then point-wise multiplication by $f_\alpha$ defines a compact operator \newline$f_\alpha: \Omega_{\exp;\eta}\to C^0_{2+\zeta}$ provided $\alpha>0$.
\item Let $f\in\Omega_{\exp;\alpha}$.
Then point-wise multiplication by $f_\alpha$ defines a compact operator \newline $f_\alpha: \Omega_{\zeta}\to C^0_{e^{\eta t}}$ provided $\alpha\geq\eta$.
\end{enumerate}
\end{corollary}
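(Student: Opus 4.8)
The plan is to proceed exactly as in the proof of the preceding corollary, reducing each of the four statements to the Ascoli-type criterion of Lemma \ref{ascoli}. Write $w$ for the weight attached to the target space, so $w(t) = (1+t)^{2+\zeta}$ when the target is $C^0_{2+\zeta}$ and $w(t) = e^{\eta t}$ when it is $C^0_{e^{\eta t}}$, and for $\xi$ in the domain set $F_\xi := w\cdot f_\alpha\,\xi$. The operator is compact precisely when the image $\Phi = \{F_\xi : ||\xi|| \leq 1\}\subset C^0_0(\kf)$ of the unit ball is relatively compact, so I would verify the two hypotheses of Lemma \ref{ascoli}: that $\Phi$ is uniformly bounded, and that $|F_\xi(t) - F_\xi(t')| \leq C(\lambda(t') - \lambda(t))$ for some constant $C$ independent of $\xi$ and some positive, monotonically decreasing $\lambda$ with $\lambda(t) \to 0$.

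The uniform bound follows by multiplying pointwise decay estimates: in (1)--(3) one uses $|f_\alpha| = \mathrm O((1+t)^{-(1+\alpha)})$ and $|\dot f_\alpha| = \mathrm O((1+t)^{-(2+\alpha)})$ from $f_\alpha\in\Omega_\alpha$, while in (4) one uses $|f_\alpha|, |\dot f_\alpha| = \mathrm O(e^{-\alpha t})$ from $f_\alpha\in\Omega_{\exp;\alpha}$; the domain decay supplies the factor $\mathrm O((1+t)^{-(1+\zeta)})$ in (1) and (4) and the factor $\mathrm O(e^{-\eta t})$ in (2) and (3). For the equicontinuity estimate I would, as in the earlier corollaries, differentiate $F_\xi$, bound $|\dot F_\xi(t)|$ by an integrable majorant $g(t)$ that is independent of $\xi$, and take $\lambda(t) = \int_t^\infty g(s)\,ds$, so that $|F_\xi(t) - F_\xi(t')| \leq \int_{t'}^t g \leq \lambda(t') - \lambda(t)$.

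Applying the product rule to $F_\xi = w\,f_\alpha\,\xi$ produces three terms in each case, and the outcome depends on how the weight interacts with the decay. In (1) all three terms are $\mathrm O((1+t)^{-(1+\alpha)})$, so one may take $\lambda(t) = c\,(1+t)^{-\alpha}$, which tends to $0$ since $\alpha > 0$. In (2) the factors $e^{\eta t}$ and $e^{-\eta t}$ cancel and again the terms are $\mathrm O((1+t)^{-(1+\alpha)})$, giving the same $\lambda$. In (3) the exponential decay $e^{-\eta t}$ of the domain dominates the polynomial factors, so $g$ decays exponentially and $\lambda$ may be chosen exponentially decaying; here $\alpha > 0$ is more than enough. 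The case that needs the most care, and the one I would check first, is (4): the target weight $e^{\eta t}$ combines with the decay $e^{-\alpha t}$ of $f_\alpha$ into $e^{(\eta - \alpha)t}$, and it is exactly the hypothesis $\alpha \geq \eta$ that keeps this factor bounded; the surviving polynomial factor $\mathrm O((1+t)^{-(1+\zeta)})$ then controls both the uniform bound and the majorant $g$, and one takes $\lambda(t) = c\,(1+t)^{-\zeta}$, which tends to $0$ since $\zeta > 0$. In every case Lemma \ref{ascoli} then yields relative compactness of $\Phi$, which is the assertion. I anticipate no genuine obstacle beyond this bookkeeping of weights; as in the preceding corollary, one case can be written out in full and the remaining ones treated by the same computation.
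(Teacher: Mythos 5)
Your proposal is correct and follows exactly the paper's strategy: the paper writes out case (1) by bounding $\bigl|\frac{d}{ds}\bigl((1+s)^{2+\zeta}f_\alpha(s)\xi(s)\bigr)\bigr|$ by a $\xi$-independent integrable majorant of order $(1+s)^{-(1+\alpha)}$ and invoking Lemma \ref{ascoli} with $\lambda(t)=c(1+t)^{-\alpha}$, then declares the remaining cases analogous. Your weight bookkeeping in cases (2)--(4), including the role of $\alpha\geq\eta$ in taming the factor $e^{(\eta-\alpha)t}$ and the switch to $\lambda(t)=c(1+t)^{-\zeta}$ in case (4), matches what the paper leaves implicit.
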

\begin{proof}
\begin{enumerate}
\item Let $\xi$ be an element of the unit ball in $\Omega_\zeta$. We use the differentiability of the elements in $\Omega_\zeta$ and $f_\alpha$ to calculate
\begin{eqnarray*}
|(1+t)^{2+\zeta}f_\alpha(t)\xi(t) -(1+t')^{2+\zeta}f_\alpha(t')\xi(t')| &\leq& \int_{t'}^t \left|\frac{d}{ds} (1+s)^{2+\zeta}f_\alpha(s)\xi(s)\right| ds\\
&=& \int_{t'}^t \left|(2+\zeta)(1+s)^{1+\zeta}f_\alpha(s)\xi(s)\right| ds \\
& &+\int_{t'}^t\left|(1+s)^{2+\zeta}(\dot f_\alpha(s)\xi(s)+f_\alpha(s)\dot \xi(s))\right| ds\\
&\leq& \Vert\xi\Vert_{\Omega_\zeta} \int_{t'}^{t} (3+\zeta)|f_\alpha(s)| + |(1+s)\dot f_\alpha(s)|ds\\
&\leq&(3+\zeta) \Vert f_\alpha\Vert_{\Omega_\alpha} \int_{t'}^{t} \frac{1}{(1+s)^{1+\alpha}} ds\\
&=& C(\zeta, f_\alpha, \alpha)\left(\frac{1}{(1+t')^{\alpha}}-\frac{1}{(1+t )^{\alpha}}\right).
\end{eqnarray*}
The claim follows again from Lemma \ref{ascoli}. 
\end{enumerate}
The other assertions are proved similarly, see the remarks for assertions $(2),(3),(4)$ and $(6)$ in the proof of Corollary \ref{Acompact}.
\end{proof}

\begin{lemma}
Differentiation $\xi\mapsto \dot \xi$ defines a bounded operator $\mathrm{Lie}(\mathcal G_{C\cap C(\sigma)})\to \Omega_\zeta(\cf)\oplus\Omega_{\exp;\eta}(\cf^\perp)$.
\end{lemma}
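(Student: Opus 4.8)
The plan is to unwind the definitions of the two norms and observe that differentiation shifts every weight index up by one, which is precisely the relationship between the norm on the domain and the target spaces; consequently there is no genuine analysis to do, only bookkeeping. Throughout I work with the fixed-rate gauge algebra $\mathrm{Lie}(\mathcal G_{C\cap C(\sigma),\eta})$ carrying the norm $||\cdot||_\mathcal G$ introduced above, since the target space $\Omega_{\exp;\eta}(\cf^\perp)$ has fixed decay rate $\eta$. Recall the decomposition $\kf = (\cf\cap\cf(\sigma))\oplus(\cf\cap\cf(\sigma)^\perp)\oplus\cf^\perp$ and the corresponding splitting $\xi = \xi_0^D + \xi_1^D + \xi^H$, so that
$$||\xi||_\mathcal G = ||\dot\xi_0^D||_{\Omega_\zeta} + ||\xi_1^D||_{C^2_\zeta} + ||\xi^H||_{C^2_{e^{\eta t}}}.$$
I would then estimate the three pieces of $\dot\xi = (\dot\xi_0^D + \dot\xi_1^D) + \dot\xi^H$ separately, the first two contributing to the $\Omega_\zeta(\cf)$-component and the third to the $\Omega_{\exp;\eta}(\cf^\perp)$-component.

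For $\dot\xi_0^D$ there is nothing to do, as $||\dot\xi_0^D||_{\Omega_\zeta}$ is literally one of the summands of $||\xi||_\mathcal G$. For $\dot\xi_1^D$ the key observation is that the $\Omega_\zeta$-norm of a function is the supremum of its weight-$(1+\zeta)$ value together with its weight-$(2+\zeta)$ derivative, so
$$||\dot\xi_1^D||_{\Omega_\zeta} \leq ||\dot\xi_1^D||_{1+\zeta} + ||\ddot\xi_1^D||_{2+\zeta} \leq ||\xi_1^D||_{C^2_\zeta};$$
that is, the weight-$\zeta$ control of the derivative $\dot\xi_1^D$ is exactly the second- and third-order content already built into the $C^2_\zeta$-norm of $\xi_1^D$. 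Summing the two contributions and using $\Omega_\zeta(\cf) = \Omega_\zeta(\cf\cap\cf(\sigma))\oplus\Omega_\zeta(\cf\cap\cf(\sigma)^\perp)$ gives $\dot\xi^D \in \Omega_\zeta(\cf)$ with bound $2||\xi||_\mathcal G$. The exponentially weighted piece is identical in spirit:
$$||\dot\xi^H||_{\Omega_{\exp;\eta}} = ||\dot\xi^H||_{e^{\eta t}} + ||\ddot\xi^H||_{e^{\eta t}} \leq ||\xi^H||_{C^2_{e^{\eta t}}} \leq ||\xi||_\mathcal G,$$
so $\dot\xi^H \in \Omega_{\exp;\eta}(\cf^\perp)$.

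Combining the three bounds yields $\dot\xi \in \Omega_\zeta(\cf)\oplus\Omega_{\exp;\eta}(\cf^\perp)$ with operator norm at most $2$, which is the asserted boundedness. The only point requiring any care — and it is essentially the entire content of the lemma — is to line up the weight conventions correctly: differentiation raises the order of decay by one, and the norm on $\mathrm{Lie}(\mathcal G_{C\cap C(\sigma),\eta})$ was designed (with $\Omega_\zeta$-control of $\dot\xi_0^D$ and full second-order $C^2$-control of $\xi_1^D$ and $\xi^H$) precisely so that these raised orders fall into the target spaces. I therefore expect no substantive obstacle, merely the need to track the decomposition and the shifted indices faithfully.
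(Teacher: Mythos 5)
Your proposal is correct and follows essentially the same route as the paper's own proof: decompose $\xi = \xi_0^D + \xi_1^D + \xi^H$, note that $||\dot\xi_0^D||_{\Omega_\zeta}$ is already a summand of $||\xi||_{\mathcal G}$, and absorb $||\dot\xi_1^D||_{\Omega_\zeta}$ and $||\dot\xi^H||_{\Omega_{\exp;\eta}}$ into the $C^2_\zeta$- and $C^2_{e^{\eta t}}$-norms respectively. The bookkeeping of the shifted weights is exactly what the paper does.
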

\begin{proof}
Let $\xi$ be an element of the unit ball in $\mathrm{Lie}{\mathcal G}_{[C,C];\eta}$.
We have 
\begin{eqnarray*}
\Vert\dot\xi\Vert_{\Omega_\zeta(\cf)\oplus\Omega_{\exp;\eta}(\cf^\perp)} &=& \Vert\dot\xi^D\Vert_{\Omega_\zeta} \oplus \Vert\dot \xi^H\Vert_{\Omega_{\exp;\eta}} \\
&\leq& \Vert\dot\xi_0^{D}\Vert_{\Omega_\zeta(\cf\cap\cf(\sigma))} +\Vert\dot\xi_1^{D}\Vert_{\Omega_\zeta(\cf\cap\cf(\sigma)^\perp)} \oplus \Vert\dot \xi^H\Vert_{\Omega_{\exp;\eta}} \\
&\leq&  \Vert\dot\xi_0^{D}\Vert_{\Omega_\zeta(\cf\cap\cf(\sigma))} +\Vert\xi_1^{D}\Vert_{C^2_\zeta(\cf\cap\cf(\sigma)^\perp)} \oplus \Vert\xi^H\Vert_{C^2_{e^{\eta t}}}\\
&=& \Vert\xi\Vert_{\mathrm{Lie}(\mathcal G_{C\cap C(\sigma)})}.
\end{eqnarray*}
\end{proof}

\section{The Nahm Implosion of $T^*K_\mathbb C$}
Now given a general hyperk\"ahler manifold $M$ with a tri-hamiltonian $K$-action we can define its Nahm hyperk\"ahler implosion to be the hyperk\"ahler quotient 
\index{$M_{\rm hkimpl}$}
$$M_{\rm hkimpl} = (M \times \mathcal Q) \hkq K.$$
Here we recall (see also the discussion in $\S 3$) that $\mathcal Q$ is the stratified hyperk\"ahler space $\mathcal Q = \coprod_{C} \mathcal Q_C$, where the strata themselves are given by $\mathcal Q_C = \coprod_\sigma \mathcal Q_{C,\sigma}$.

The universal symplectic implosion for $K$ may be viewed as the implosion
$(T^*K)_{\rm impl}$ of the cotangent bundle $T^*K$. In this section we 
shall relate $\mathcal Q$ to the Nahm hyperk\"ahler implosion
of $T^* K_{\mathbb C}$, where the latter space has the hyperk\"ahler
structure found by Kronheimer \cite{Kronheimer:1988}, which we recall in the next section.

\subsection{$T^*K_\mathbb C$ and Nahm's Equations}
In \cite{Kronheimer:1988} Kronheimer has realised $T^*K_\mathbb C$ as a moduli space of smooth solutions to the Nahm equations on $[0,1]$. The precise setting is as follows. Let 
$$\mathcal A = C^1([0,1],\kf)^4 \cong C^1([0,1],\kf)\otimes \mathbb H.$$
This becomes a flat infinite-dimensional hyperk\"ahler manifold with hypercomplex structure given by right multiplication by $-i,j,k$ and metric given by the $L^2$-metric.
Define the gauge group 
\begin{equation}\label{G00} 
\index{$\mathcal G_{00}$}
\mathcal G_{00} = \{u\in C^2([0,1],K)\ | \  u(0) = 1 = u(1) \},
\end{equation}
the Lie algebra of which is given by 
\begin{equation}\label{LieG00}
\index{$\mathrm{Lie}(\mathcal G_{00})$}
\mathrm{Lie}(\mathcal G_{00}) = \{\xi\in C^2([0,1],\kf)\ | \ \xi(0) = 0=\xi(1)\},
\end{equation}
i.e. $C^2$-paths in $\mathfrak k$ vanishing at the end points of the interval. Kronheimer then shows that, as a holomorphic symplectic manifold, the moduli  space $\mathcal M_{00}$ of solutions to the Nahm equations modulo gauge transformations may be identified with $T^*K_\mathbb C$. The identification (for the complex structure $I$) goes via putting $\alpha = T_0-iT_1$, $\beta= T_2+iT_3$. Then Nahm's equations are equivalent to 
\begin{eqnarray}
\dot\beta + [\alpha,\beta] &=&0\quad \text{(the complex equation)}\\
\dot \alpha + \dot\alpha^* + [\alpha,\alpha^*] + [\beta,\beta^*] &=&0\quad \text{(the real equation)}.
\end{eqnarray} 
It then turns out that  $\mathcal M_{00} \cong \mathcal M_{00}^\C$, where $\mathcal M_{00}^\C$ is the space of solutions $(\alpha,\beta)$ to the complex equation modulo complex gauge transformations. The result that is needed here is that in the complex gauge orbit of a solution $(\alpha, \beta)$ to the complex equation  there exists a solution to the real equation and any two solutions to the real equation that lie in the same complex gauge orbit are equivalent under a gauge transformation
valued in the compact group.
Then define a map 
$$\phi: \mathcal M_{00}^\C \to K_\mathbb C \times \mathfrak k_\mathbb C\cong T^*K_\mathbb C \qquad (\alpha,\beta)\mapsto (g(1), \beta(0)),$$
where $g$ is the unique complex gauge transformation such that $g(0) = 1$ and $\mathrm{Ad}(g)(\alpha) - \dot gg^{-1} = 0$ (that is, $g$ gauges $\alpha$ to zero).

The following lemma is proved in \cite{DancerSwann:1996} (where a different sign convention is used). The argument is similar to our previous calculation of the moment map for the $T$-action on $\mathcal Q_C$.
\begin{lemma}
Let $\mathcal G_0$ be the group of gauge transformations equal to the identity at $t=0$ and lying in $K$ at $t=1$. Then $K = \mathcal G_0/\mathcal G_{00}$ acts on $\mathcal M_{00}$ preserving the hyperk\"ahler structure with moment map given by 
$$\mu(T_0,T_1,T_2,T_3) = (T_1(1),T_2(1),T_3(1)).$$ 
\end{lemma}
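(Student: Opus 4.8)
The plan is to follow exactly the strategy of the moment map computations of \S4, specialised to Kronheimer's setting on the compact interval $[0,1]$, where the $L^2$-metric is manifestly finite and all boundary contributions are honest endpoint evaluations rather than limits at infinity. First I would record the group-theoretic setup: $\mathcal G_{00}$ is precisely the kernel of the evaluation homomorphism $\mathcal G_0 \to K$, $u \mapsto u(1)$, so $\mathcal G_{00}$ is normal in $\mathcal G_0$ and $\mathcal G_0/\mathcal G_{00} \cong K$ via this evaluation. Since $\mathcal M = \mathcal A \hkq \mathcal G_{00}$, the residual group $K$ acts on $\mathcal M$. That this action preserves the hyperk\"ahler structure is immediate and is argued as in Lemma \ref{groupaction}: the induced action on tangent vectors is the pointwise conjugation $X_i \mapsto \mathrm{Ad}(u)(X_i)$, which is an $L^2$-isometry by bi-invariance of $\langle\cdot,\cdot\rangle$ and commutes with $I,J,K$, as these act by right multiplication on the $\mathbb H$-factor while $\mathrm{Ad}(u)$ acts on the $\kf$-factor.

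For the moment map I would compute $\mu_I$ and invoke cyclic symmetry for $\mu_J,\mu_K$. Take $\xi \in \mathrm{Lie}(\mathcal G_0)$, so $\xi(0)=0$ and $\xi(1)\in\kf$; the fundamental vector field at $T$ is $X^\xi_T = ([\xi,T_0]-\dot\xi, [\xi,T_1], [\xi,T_2], [\xi,T_3])$. Applying $I$ and pairing with a tangent vector $X$ to $\mathcal M$ gives, as in the proof of Proposition \ref{propmomentmaps},
$$\omega_I(X^\xi_T, X) = \int_0^1 \big(\langle [\xi,T_1], X_0\rangle + \langle \dot\xi - [\xi,T_0], X_1\rangle - \langle [\xi,T_3], X_2\rangle + \langle [\xi,T_2], X_3\rangle\big)\,dt.$$
Integrating the $\langle \dot\xi, X_1\rangle$ term by parts yields the boundary term $\langle \xi(1), X_1(1)\rangle$, the contribution at $t=0$ vanishing since $\xi(0)=0$, and applying ad-invariance $\langle [\xi,A],B\rangle = \langle \xi,[A,B]\rangle$ to the remaining terms collects the bulk integrand into $\langle \xi,\, -\dot X_1 + [T_1,X_0] - [T_0,X_1] - [T_3,X_2] + [T_2,X_3]\rangle$. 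This integrand is exactly minus the first linearised Nahm equation, hence vanishes because $X$ is tangent to the space of solutions, leaving $\omega_I(X^\xi_T, X) = \langle \xi(1), X_1(1)\rangle$.

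Writing $\eta = \xi(1) \in \kf = \mathrm{Lie}(K)$ and noting $X_1(1) = d(T_1(1))(X)$, this identity reads $d\langle \mu_I, \eta\rangle(X) = \langle \eta, d(T_1(1))(X)\rangle$, so $\mu_I(T) = T_1(1)$ under $\kf^*\cong\kf$; the identical computation for $J,K$ gives $\mu_J(T)=T_2(1)$ and $\mu_K(T)=T_3(1)$. I would finish by checking descent to $\mathcal M$: for $u\in\mathcal G_{00}$ one has $u(1)=1$, so $(u.T)_i(1) = \mathrm{Ad}(u(1))(T_i(1)) = T_i(1)$, and the boundary term depends only on $\xi(1)$, so $(T_1(1),T_2(1),T_3(1))$ is a well-defined $K$-equivariant function on the quotient. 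The computation is routine given the analogues in \S4; the only points needing care are the sign bookkeeping in the integration by parts (the boundary sits at the far endpoint $t=1$, producing a $+$ sign, in contrast with the $-T_i(0)$ obtained when the free endpoint is $t=0$) and the verification that the bulk integral vanishes precisely on tangent vectors to $\mathcal M$. Because we work on a compact interval, none of the asymptotic or finiteness subtleties of the half-line analysis arise, so there is no genuine analytic obstacle here.
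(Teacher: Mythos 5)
Your proof is correct and is essentially the argument the paper has in mind: the paper does not write out this computation but cites \cite{DancerSwann:1996} and notes it is analogous to the moment map calculations of \S 4 (Proposition \ref{propmomentmaps} and Corollary \ref{Tmoment}), which is exactly the integration-by-parts computation you carry out, with the boundary term now an honest evaluation at the free endpoint $t=1$. Your sign bookkeeping and the verification that the bulk term is the linearised Nahm equation are both right, so nothing is missing.
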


\subsection{The Implosion of $T^*K_\mathbb C$}
\begin{lemma}
Let $(T_0,T_1,T_2,T_3)$ be a solution to Nahm's equations on an interval $[0,a)$ ($a= \infty$ is allowed). Then there exists a gauge transformation $u: [0,a) \to K$ such that 
\begin{enumerate}
\item $u(0) = 1 = u(a)$,
\item There exists an $\epsilon >0$ such that on $[0,\epsilon]$ we have $u.(T_0,T_1,T_2,T_3) = (0,\mathrm{Ad}(u)(T_1), \mathrm{Ad}(u)(T_2), \mathrm{Ad}(u)(T_3))$ and $u(t) \equiv 1$ for $t\geq a/2$, (respectively $t\geq 2$ if $a=\infty$). 
\end{enumerate}
\end{lemma}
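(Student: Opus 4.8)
The plan is to build $u$ in two stages: first put the solution into temporal gauge ($T_0=0$) near $t=0$, and then smoothly deform the gauge transformation back to the identity before reaching $a/2$, so that it becomes trivial on the right.

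First I would solve the ordinary differential equation $\dot v = v T_0$ on $[0,a)$ with initial condition $v(0)=1$. Since this equation is linear in $v$ and $T_0$ is continuous, a unique solution exists on the whole interval $[0,a)$ with no finite-time breakdown; because $T_0$ takes values in $\mathfrak k$ and $v(0)\in K$, the solution stays in $K$ (it is the integral curve of the corresponding invariant vector field), and since $T_0\in C^1$ it is $C^2$. By construction $\dot v\,v^{-1} = \mathrm{Ad}(v)(T_0)$, so $v$ gauges $T_0$ to zero, that is $v.(T_0,T_1,T_2,T_3) = (0,\mathrm{Ad}(v)T_1,\mathrm{Ad}(v)T_2,\mathrm{Ad}(v)T_3)$ on all of $[0,a)$.

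Next, I would fix $\epsilon$ with $0<\epsilon<a/2$ (taking $\epsilon<2$ when $a=\infty$) and define $u$ piecewise: $u=v$ on $[0,\epsilon]$, $u\equiv 1$ on $[a/2,a)$ (respectively $[2,\infty)$ when $a=\infty$), and on the intermediate interval $[\epsilon,a/2]$ let $u=w$ be a $C^2$ path joining $v(\epsilon)$ to $1$. To make $u$ globally $C^2$ I would require $w$ to have the same $2$-jet as $v$ at $t=\epsilon$ and the $2$-jet $(1,0,0)$ at the other endpoint. Such a $w$ exists because $K$ is connected: one joins $v(\epsilon)$ to $1$ by any smooth path and then corrects the velocity and acceleration at the two endpoints by modifications supported in small neighbourhoods of $\epsilon$ and of $a/2$, carried out in exponential coordinates. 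The resulting $u$ satisfies $u(0)=v(0)=1$, agrees with $v$ on $[0,\epsilon]$ (hence has the asserted temporal-gauge form there), and is constantly $1$ near $a$, so it extends continuously with $u(a)=1$.

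The only real work is this interpolation step, i.e. producing the $C^2$ path $w$ with the prescribed endpoint $2$-jets, and I expect it to be the main (though routine) obstacle: it is exactly here that connectedness of $K$ and the softness of jet-matching for maps into a manifold are used. Everything else — global solvability of the temporal-gauge equation, the fact that its solution remains in $K$, and the verification of the endpoint conditions — is immediate. Finally, in the case $a=\infty$ one notes that, since $u$ is eventually equal to $1$, any asymptotic or decay conditions demanded of gauge transformations on the half-line are automatically fulfilled.
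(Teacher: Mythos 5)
Your argument is correct, and its first half coincides with the paper's: both proofs solve the linear temporal-gauge equation $\dot v = vT_0$, $v(0)=1$, and observe that the solution stays in $K$ and kills $T_0$. Where you diverge is in how the gauge transformation is damped back to the identity so that it is trivial for $t\geq a/2$. The paper does this concretely: it first applies $\exp(th_1(t)T_0(0))$ to arrange $\tilde T_0(0)=0$, then chooses $\epsilon$ so small that the curve $u_2$ solving the temporal-gauge equation has length at most $\tfrac12\mathrm{injrad}(K)$ on $[0,\epsilon]$, writes $u_2=\exp(\xi_2)$ there, and multiplies $\xi_2$ by an explicit bump function. You instead keep $v$ on $[0,\epsilon]$ and splice in an abstract $C^2$ path $w$ from $v(\epsilon)$ to $1$ with matching $2$-jets at both ends, using only connectedness of $K$ and the softness of jet interpolation. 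Your route is a little more elementary (no injectivity radius, no preliminary gauge transformation) at the cost of being less explicit; the paper's exponential-coordinate cut-off has the mild advantage of producing a formula for $u$ and of staying within a controlled neighbourhood of the identity, but nothing in the statement requires that. Your closing remark that the eventual constancy of $u$ makes all asymptotic conditions on half-line gauge transformations automatic is exactly the point needed for the application in \S 6.
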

\begin{proof}
Let $h$ be a cut-off function such that 
\begin{itemize}
\item $h\equiv 1$ on $[0,1]$
\item $h\equiv 0$ on $[2,\infty)$
\end{itemize}
and define $h_1(t) = h(4t/a)$ (if $a= \infty$ just take $h_1 = h$).
Then applying the gauge transformation $u_1 = \exp(th_1(t)T_0(0))$ we find that 
\begin{eqnarray*}
(u_1.T_0)(0) &=& (\mathrm{Ad}(u_1(t)) T_0(t) - \dot u_1(t)u_1^{-1}(t))|_{t=0}\\
&=& (\mathrm{Ad} (u_1(t)) T_0(t) - h_1(t)T_0(0) - t\dot h_1(t)T_0(0))|_{t=0} \\
&=& T_0(0) - T_0(0) = 0.
\end{eqnarray*}
Writing $\tilde{T}_0 = u_1.T_0$, we see that $\tilde{T}_0(0) = 0$. Let $u_2$ be the
unique solution with $u_2(0)=1$ to 
$$\mathrm{Ad}(u_2)\tilde{T}_0 - \dot u_2u_2^{-1} = 0.$$
(which is equivalent to the linear equation $\dot{u_2} = u_2 \tilde{T}_0$).
Now pick $0 < \epsilon \leq \min(\frac{a}{4},1)$ sufficiently small such that 
$$|\tilde{T}_0(t)| \leq\frac{1}{2}\mathrm{injrad}(K) \qquad \forall t\in [0,\epsilon],$$
where $\mathrm{injrad}(K)$ denotes the injectivity radius of the Riemannian manifold $K$, whose metric is given by the fixed invariant inner product on $\kf$. 

Then we have that the length of the curve $u_2$ in $K$ restricted to the interval $[0,\epsilon]$ can be bounded as follows:
$$l(u_2) = \int_0^\epsilon |u_2^{-1}(s)\dot u_2(s)|ds = 
\int_0^\epsilon |\tilde{T}_0(s)|ds \leq \frac{1}{2}\mathrm{injrad}(K)\epsilon\leq\frac{1}{2}\mathrm{injrad}(K).$$
Thus, for $t\in [0,\epsilon]$ we may write $u_2(t) = \exp(\xi_2(t))$, where $\xi_2$ is a function taking values in $\mathfrak k$ such that $\xi_2(0) = 0$. Now put $h_2(t) = h(t/\epsilon)$ and define $u(t) = \exp(h_2(t)\xi_2(t))u_1(t)$. Then by construction $u(0) = 1$ and $u(t) = 1$ for $t\geq\frac{a}{2}$ and for $0\leq t<\epsilon$ we see
$$u.T_0 = u_2.\tilde{T}_0 = 0.$$
\end{proof}

Now use the map $t\mapsto t+1$ to identify $\mathcal Q$ as a moduli space of solutions to Nahm's equations on $[1,\infty)$ and consider the hyperk\"ahler manifold $T^*K_\mathbb C\times \mathcal Q$. 

\begin{proposition}
Let $b>0$ and consider the diagonal $K$-action on $T^*K_\mathbb C \times \mathcal Q$ given by gauge transformations lying in $K$ at $t=1$. 
Then 
$$((T^*K_\mathbb C, \Vert\cdot\Vert_{L^2}) \times (\mathcal Q, \Vert\cdot\Vert_{B,b})) \hkq K \cong (\mathcal Q, \Vert\cdot\Vert_{B,b+1}).$$
\end{proposition}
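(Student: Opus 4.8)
The plan is to realise both factors as Nahm data on adjacent intervals and to read the hyperk\"ahler quotient as the operation of concatenating a solution on $[0,1]$ with one on $[1,\infty)$ at their common endpoint $t=1$. Using the identification $t\mapsto t+1$ we regard $\mathcal Q$ as a moduli space of Nahm data on $[1,\infty)$, so that $T^*K_\mathbb C$ (Nahm data on $[0,1]$) and $\mathcal Q$ (Nahm data on $[1,\infty)$) share the point $t=1$. First I would identify the moment map of the diagonal $K$-action. By the lemma recalled above, the $K$-moment map on the $T^*K_\mathbb C$ factor is evaluation $(T_1(1),T_2(1),T_3(1))$ at the right endpoint, while the $K$-action on $\mathcal Q$ by gauge transformations lying in $K$ at $t=1$ has moment map $-(T_1(1),T_2(1),T_3(1))$, as follows from the computation in \S4.3.3 transported by the shift. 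Hence the zero level set of the diagonal moment map is exactly the locus where the triples $(T_1,T_2,T_3)$ of the two factors agree at $t=1$.

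Next I would perform the gluing. On the zero level set I would apply the gauge-fixing lemma just proved to each factor to arrange $T_0\equiv 0$ in a neighbourhood of $t=1$; then the two solutions match in all four components at $t=1$, and since the $\dot T_i$ there are determined by Nahm's equations from the matched values, the concatenation is a $C^1$ solution of Nahm's equations on $[0,\infty)$ lying in $\mathcal N_C$ for the appropriate $C$. The essential bookkeeping is then to check that this concatenation descends to a bijection on quotients: the residual gauge group on $[0,\infty)$ — generated by $\mathcal G_{00}$ on $[0,1]$, by the $\mathcal Q$-gauge group on $[1,\infty)$, and by the diagonal $K$ acting at $t=1$ — must be exactly the gauge group $\mathcal G_{[C,C]}$, together with its $[C,C]$-collapsing, that defines the stratum $\mathcal Q_C$. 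Concretely, $\mathcal G_{00}$ supplies the condition at $t=0$ and interior freedom on $[0,1]$, the diagonal $K$ supplies an arbitrary value at the now-interior point $t=1$, and the $\mathcal Q$-gauge group supplies interior freedom and the asymptotic $[C,C]$-condition on $[1,\infty)$; together these are precisely $\mathcal G_{[C,C]}$ on $[0,\infty)$. Thus the hyperk\"ahler quotient reconstructs $\mathcal Q$ stratum by stratum.

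Finally I would track the metric and the hypercomplex structure. Splitting the Bielawski integral at $t=1$ and using $\int_0^1\langle\delta_i,\delta_i\rangle\,dt=\langle\delta_i,\delta_i\rangle$, one obtains, for a glued tangent vector $X$ with $X_i(\infty)=\delta_i$,
$$\|X\|_{B,b+1}^2 = \int_0^1\sum_i\langle X_i,X_i\rangle\,dt + b\sum_i\langle\delta_i,\delta_i\rangle + \int_1^\infty\sum_i\bigl(\langle X_i,X_i\rangle-\langle\delta_i,\delta_i\rangle\bigr)\,dt = \|X|_{[0,1]}\|_{L^2}^2 + \|X|_{[1,\infty)}\|_{B,b}^2,$$
so the constant interval $[0,1]$ contributes exactly the extra $+1$ to the Bielawski parameter. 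Since $I,J,K$ arise from right multiplication by $\mathbb H$ on all intervals simultaneously, they are preserved by concatenation, and the quotient is compatible with the hyperk\"ahler data.

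The main obstacle I anticipate is not the metric identity, which is the short computation above, but the careful matching of the gauge groups across the gluing: one must verify that no spurious identifications are introduced and none are lost when the diagonal $K$ converts the boundary freedom at $t=1$ into interior freedom, and that this is fully compatible with the stratification and with the $[C,C]$-collapsing defining $\mathcal Q$.
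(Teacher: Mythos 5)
Your proposal is correct and follows essentially the same route as the paper: compute the diagonal moment map as the difference of boundary evaluations at $t=1$, use the gauge-fixing lemma to set $T_0=0$ near the junction and concatenate, identify the residual gauge freedom with $\tilde{\mathcal G}_{[C,C]\cap C(\sigma)}(b)$, and split the Bielawski integral at $t=1$ to see the parameter shift $b\mapsto b+1$. The metric identity you write is exactly the one in the paper's proof.
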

\begin{proof}
We work stratum by stratum.
We have already calculated the moment maps for the individual factors and the moment map for the diagonal $K$-action on the product is simply the sum of the moment maps on the factors. Hence for elements $T\in T^*K_\mathbb C$ and 
$\tilde{T} \in \mathcal Q_{C,\sigma}$ we find
$$\mu(T,\tilde{T}) = (T_1(1)-\tilde{T}_1(1),T_2(1)-\tilde{T}_2(1),
T_3(1)-\tilde{T}_3(1)).$$ 
Thus, $\mu^{-1}(0)$ consists of pairs of
solutions $T,\tilde{T}$ to Nahm's equations on $[0,1]$ and
$[1,\infty)$ respectively such that for $i=1,2,3$ we have $T_i(1) =
  \tilde{T}_i(1)$. It follows from the previous lemma that using the
  gauge groups we can represent $T,\tilde{T}$ by solutions such that in a
  neighbourhood of $t=1$ the $T_0$-terms vanish and we can therefore
  represent points in the zero locus of the hyperk\"ahler moment map
  by smooth solutions to the Nahm equations on the whole half-line
  $[0,\infty)$.

Now first quotienting out by the gauge actions on the individual
factors and then quotienting out by the diagonal $K$-action amounts to the
same thing as quotienting out by $\tilde{\mathcal G}_{[C,C]\cap
  C(\sigma)}(b)$.

Let $(T,\tilde{T})$ be a solution such that near $t=1$ we have
$\tilde{T}_0=0=T_0$. Then by uniqueness of solutions to linear ODEs,
tangent vectors to $(T^* K_\C\times \mathcal Q) \hkq K$ at $(T,\tilde{T})$ can
be represented by paris of tangent vectors $(X,\tilde{X})$ on the individual
factors that agree at $t=1$. The product metric applied to such a pair
gives
$$\Vert(X,\tilde{X})\Vert^2 = \Vert X\Vert_{L^2}^2 + \Vert\tilde{X}\Vert^2_{B,b} = \int_0^1 \sum_{i=0}^3|X_i|^2 + b\sum_{i=0}^3 |\tilde X_i(\infty)|^2 + \int_1^\infty  \sum_{i=0}^3 (|\tilde{X}_i|^2-|\tilde{X}_i(\infty)|^2).$$
If we define 
$$\bar X = \begin{cases} X & \mbox{on } [0,1] \\ \tilde{X} &\mbox{on } [1,\infty),\end{cases}$$
then we find 
$$\Vert(X,\tilde{X})\Vert^2 = \Vert\bar X\Vert^2_{B,b+1}.$$
\end{proof}

\begin{remark}
\begin{enumerate}
\item 
See also the discussion at the beginning of $\S 5$ in \cite{Bielawski:1998}, where Bielawski has obtained a similar result in a slightly different setting.
\item Note that there is nothing special about the interval $[0,1]$ in Kronheimer's description of $T^*K_\C$ - any compact interval will do. Thus, the parameter $b$ is shifted by the length of the interval used to define the hyperk\"ahler metric on $T^*K_\C$.
\item For any $b,b'>0$ with $b'=rb$ for $r>0$, the moduli spaces $\mathcal Q_{C,b}$ and $\mathcal Q_{C,b'}$ are isometric by the map $T(t) \mapsto rT(rt)$.
\end{enumerate}
\end{remark}

\section{Complex structures}

In order to analyse the complex structures, as usual we split the Nahm equations
into real and complex parts:

\begin{equation*}
\alpha = T_0 - i T_1, \quad  \beta = T_2 + i T_3 
\index{$\alpha, \beta$}
\end{equation*}

The moduli spaces will be equivalent (cf. \cite{Biquard:1996}) to
the relevant spaces of solutions to the complex equation
\begin{equation} \label{cplexeqn}
\frac{d \beta}{dt} = [\beta, \alpha]
\end{equation}
modulo the group $\tilde{\mathcal G}_{[C,C]\cap C(\sigma)}(b)^\C$ 
\index{$\tilde{\mathcal G}_{[C,C]\cap C(\sigma)}(b)^\C$} 
of complex gauge transformations $g : [0, \infty) \rightarrow K_\C$ satisifying asymptotic conditions analogous to those of $\tilde{\mathcal G}_{[C,C]\cap C(\sigma)}(b)$. For example, $\xi:[0,\infty)\to\gf = \kf_\C$ lies in the Lie algebra $\mathrm{Lie}(\tilde{\mathcal G}_{[C,C]\cap C(\sigma)}(b)^\C)$ if can be written in the form 
$$\xi(t)= (t-b)\dot\xi(\infty) + \tilde\xi, $$
where $\xi(0) = 0$, $\dot\xi(\infty)\in Z(\cf)_\C$ and $\tilde\xi(\infty) \in [\cf_\C,\cf_\C]\cap\cf(\sigma)_\C$. The components $\xi^D$ and $\xi^H$ with respect to $\gf = \cf_\C \oplus \cf^\perp_\C$ are required to satisfy the same decay condition as before. 

We shall also need the group $\tilde{\mathcal G}^0_{[C,C]\cap C(\sigma)}(b)^\C$ 
\index{$\tilde{\mathcal G}^0_{[C,C]\cap C(\sigma)}(b)^\C$}
 of complex gauge transformations $u$ that are not constrained to be equal to the identity at $t=0$.

We write (following the notation of \cite{Biquard:1996})
\[
\alpha = \tau_0 - \tau_r - \frac{H}{t+1} + \ldots
\]
\[
\beta = \tau_\C + \frac{Y}{t+1} + \ldots
\]
where $\tau_r = i \tau_1, \tau_\C = \tau_2 + i \tau_3$ 
\index{$\tau_r, \tau_\C$} and
$H = i \sigma_1, Y = \frac{1}{2}(\sigma_2 + i \sigma_3)$. We denote
$\tau_r + \frac{H}{t+1}$ by $\alpha_0$ and $\tau_\C + \frac{Y}{t+1}$
by $\beta_0$. 
\index{$\alpha_0,\beta_0$} 
Writing $X=\frac{1}{2}(-\sigma_2+i\sigma_3)$, we thus obtain an $\sln(2)$-triple $(H,X,Y)$ satisfying $[H,Y] = -2Y, [H,X] = 2X, [X,Y] = H$. 
\index{$H,X,Y$}

As in \cite{Biquard:1996}, we aim to find a complex gauge transformation in $\tilde{\mathcal G}^0_{[C,C]\cap C(\sigma)}(b)^\C$ (i.e. one that does not necessarily satisfy $g(0) =1$), that gauges $\alpha_0$ to $\alpha$, that is $\alpha = g. \alpha_0 = g \alpha_0 g^{-1} - \dot{g} g^{-1}$.
This amounts to solving
\[
\frac{dg}{dt} = g \alpha_0 - \alpha g
\]
First gauge $\tau_0-\tau_r$ to zero as in $\S 4$ by the transformation
\[
\exp((t - b + be^{-ct}) (\tau_0-i\tau_1))
\]
which lies in $\tilde{\mathcal G}_{[C,C] \cap C(\sigma)}(b)^\C$. From
this observation it follows in particular that the complexified gauge
group does \emph{not} act on the whole space $\tilde{\mathcal
  A}_{C,\sigma,b}$, as it does not preserve the common centraliser of
the limiting triple $\tau$ in general. However, the complexified gauge
group acts on a slightly smaller space $\tilde{\mathcal
  A}_{C,\sigma}^{reg}(b)$ 
  \index{$\tilde{\mathcal A}_{C,\sigma}^{reg}(b)$} 
  which consists of Nahm data asymptotic to
triples $\tau$ which are \emph{Biquard-regular},
i.e. $C=C(\tau_2,\tau_3)$. We now want to describe the complex
structure of the subset $\tilde{\mathcal Q}_{C,\sigma}^{reg}(b)\subset
\tilde{\mathcal Q}_{C,\sigma}(b)$, 
\index{$\tilde{\mathcal Q}_{C,\sigma}^{reg}(b)$} 
the moduli space of solutions
asymptotic to some Biquard-regular triple. The above observation shows
that after a gauge transformation in $\tilde{\mathcal G}_{[C,C] \cap
  C(\sigma)}(b)^\C$ we may assume that the limiting triple is of the
form $(0,\tau_2,\tau_3)$.

 By Proposition 4.1. in \cite{Biquard:1996} we can find a gauge transformation $g\in\tilde{\mathcal G}^0_{C\cap C(\sigma)}(b)^\C$ with \\ $\lim_{t\to\infty}\dot g(t)g^{-1}(t) =0$ such that 
$$(\alpha,\beta) = g.(\alpha_0,\beta_0 ),$$
In particular, there is no nilpotent term ($n$ in the
terminology of \cite{Biquard:1996}) in the $\beta$-coordinate, as we are working with \emph{Biquard-regular} triples. The gauge transformation $g$ has a limit  $g(\infty) \in (C\cap C(\sigma))_\C$ since $\dot gg^{-1}\in \Omega_{\zeta}(\cf_\C) \oplus \Omega_{\exp}(\cf_\C^\perp)$. Since $C$ is the local direct product $Z(C)\times [C,C]$, we can write $g(\infty) = g_0g_1$ with $g_0\in Z(C)$ and $g_1\in [C,C]$ so that 
$g_0^{-1}g$ then satisfies the asymptotic conditions for being a member of $\tilde{\mathcal G}^0_{[C,C]\cap C(\sigma)}(b)^\C$. 

Such a gauge transformation $g\in \tilde{\mathcal G}^0_{[C,C]\cap
  C(\sigma)}(b)^\C$ satisfying $(\alpha,\beta) = g.(\alpha_0,\beta_0)$
is not unique: We are allowed to multiply $g$ by a term of the form
$\mathrm{Ad}(e^{-\int\alpha_0})(p)$ for some $p\in K_\C$. In order for
$g' = g\mathrm{Ad}(e^{-\int\alpha_0})(p)$ to satisfy the asymptotic
conditions of the complex gauge group, we have to impose constraints
on $p$.  Biquard showed that $p$ has to lie in the subgroup whose Lie
algebra is given by $\cf_\C\cap \cf(Y)\subset \cf_\C$, i.e. $p\in
C_\C\cap C(Y)$. Note that $H$ acts on this space with non-positive
eigenvalues, as it does so on $\cf(Y)$. However, 
$$\mathrm{Ad}(\exp(-\int\alpha_0))(p) =\mathrm{Ad}(\exp(\log(t+1)H))(p)$$
has to have a limit in $[C_\C,C_\C]$ and hence $p$ also has to lie in $[C_\C,C_\C]$ as $H$ acts trivially on $Z(C)$. Altogether, the Lie algebra of the group $L$, in which the ambiguity $p$ lives, is given by 
$$\lf = \mathrm{Lie}(L) =[\cf_\C,\cf_\C]\cap \cf(Y).$$
Thus, we have an isomorphism of complex manifolds (with respect to our chosen 
complex structure $I$)
$$\tilde{\mathcal Q}^{reg}_{C,\sigma}(b) \cong K_\C\times_{[C_\C,C_\C]\cap C(Y)}((\tf_\C)_C + Y),$$
where $(\tf_\C)_C = \{\tau_\C\in\tf_\C | \cf(\tau_C) = \cf_\C\}\subset Z(\cf_\C)$.
This isomorphism is induced by assigning to our Nahm data the pair $(g(0), \tau_{\C} + Y)$.

In particular, if we take $C_\C = T_\C$ then $Y$ must be zero and
we get $K_{\C} \times \tf_{\C}^{\rm reg}$ where
$\tf_{\C}^{\rm reg}$ denotes the locus in the complex Cartan algebra
of elements with abelian stabiliser. In the case $K=SU(n)$ it was noted 
in \cite{DKS} (Proposition 8.1.) that this space occurred naturally inside the implosion.

On the other hand, if we look at the case $C_\C = K_\C = [K_\C,K_\C]$, then $\tau$ must be zero and we get $K_\C\times_{C(Y)}Y$, which is the nilpotent orbit in $\kf_\C$ defined by $Y$. There is no non-trivial residual torus action in this case, and the "moment map" is just zero. 

\section{The Case $K=\mathrm{SU}(2)$}
We illustrate our construction in the case $K=\SU(2)$.
We then have two strata $\mathcal Q_T$ and $\mathcal Q_K$ labelled by the two possible centralisers $C=T$ and $C=K$. The second one is given by Nahm data asymptotic to $\tau=0$ and consists of two refined strata which correspond to taking either  $\sigma = 0$ or $\sigma$ to be a standard triple, which we may assume to be such that
$$H = i\sigma_1 = \left(\begin{array}{cc}-1&0\\0&1 \end{array}\right),  \quad Y = \frac{1}{2}(\sigma_2+i\sigma_3) =  \left(\begin{array}{cc}0&1\\ 0& 0\end{array}\right).$$

The triple $\tau=(0,0,0)$ is automatically Biquard-regular and the discussion of $\S 7$ above shows that as complex manifolds (with respect to the complex structure $I$) the two refined strata are given by 
$$\tilde Q_{K,0}(b) = \tilde Q_{K,0}^{reg}(b) = K_\C \times_{K_\C} \{0\} = \{0\}$$
and 
$$\tilde Q_{K,\sigma_0}(b) = \tilde Q_{K,\sigma_0}^{reg}(b) = K_\C \times_{C(Y)} {Y} \cong K_\C/N,$$
where $N=C(Y)$ is the stabiliser of $Y$. With the choice of $\sigma$ (and hence of $H,X,Y$) as above, we have 
$$N = \left\{ \left(\begin{array}{cc}1 & t\\ 0 & 1\end{array}\right)\ | \ t\in\C\right\}.$$
In other words, $\tilde Q_{K,\sigma}(b)$ is just the regular nilpotent orbit in $\sln(2,\C)$.
The torus action is trivial on the bottom stratum as $K = [K, K]$. So the bottom stratum $\mathcal Q_K$ gives the nilpotent variety in the $\mathrm{SU}(2)$-case. The identification goes via $(g,Y)\mapsto gYg^{-1}$, or, on the level of solutions to the complex equation (\ref{cplexeqn}),  $(\alpha,\beta)\mapsto \beta(0)$, which is the complex moment map for the $K_\C$-action given by complex gauge transformations with arbitrary values at $t=0$. Note that $\beta$ is a trace-free $2\times 2$-matrix of rank $1$. 

For the top stratum we can describe the open subset represented by solutions to the Nahm equations that are asymptotic to a Biquard-regular triple (i.e. a triple such that $(\tau_2,\tau_3)$ are not both zero):
$$\tilde{\mathcal Q}_{T}^{reg}(b) = K_\C \times \tf_\C^{reg} \cong \SL(2,\C)\times (\C\setminus\{0\}),$$
where $\tf_\C^{reg} = \C\setminus\{0\}$ denotes the set of regular elements in $\tf_\C$. 

The complex moment map for the $T=\Un(1)$-action is just projection onto the $(\C\setminus\{0\}$-component in the above description. Taking the quotient by $T_\C$ at level $\tau_\C$ is just given by $(g,\tau_\C)\mapsto g\tau_\C g^{-1}$, which on the level of solutions to the complex equation asymptotic to $\tau_\C$ is again the complex moment map for the $K_\C$-action $(\alpha,\beta)\mapsto \beta(0)$.

In contrast to the above discussion, the quiver construction for the universal implosion for $K=\mathrm{SU}(2)$ gives just flat $\HH^2$, which should be thought of as the space of quivers of the form
\begin{equation*}
  \C\stackrel[\mathfrak b]{\mathfrak a}{\rightleftarrows}\C^2. 
\end{equation*}
The maximal torus $T=\Un(1)\subset \SU(2)$ acts in the standard way by $(\mathfrak{a},\mathfrak{b}) \mapsto (e^{i\theta}\mathfrak{a},e^{-i\theta}\mathfrak{b})$.
Consider the $T_\C$-invariant map
$$X: \HH^2 \to \mathfrak{gl}(2,\C) \quad (\mathfrak{a},\mathfrak{b})\mapsto \mathfrak{a}\mathfrak{b}.$$
This is the complex moment map for the $\GL(2,\C)$-action. The complex moment map for the $T$-action is given by $\mu_T(\mathfrak a,\mathfrak b) = \mathrm{tr}(X(\mathfrak a,\mathfrak b))$. 

In $\S 8$ of \cite{DKS} the identification of  $\HH^2\cong \C^4$ with the GIT quotient $(\SL(2,\C)\times \mathfrak b)\sslash N$ is explained, where 
$$\mathfrak b = \left\{ \left(\begin{array}{cc}a & b\\ 0 & -a\end{array}\right)\ | \ a,b\in\C\right\}.$$
The subset corresponding to $a\neq 0$ then gives $\SL(2,\C)\times \tf_\C^{reg}$ and the subset defined by $a=0, b\neq 0$ corresponds to the regular nilpotent orbit. Note however, that there is in this case a non-trivial torus action, leaving $a$ fixed (which is the value of the complex moment map for the $T$-action)  and taking $b$ to $t^2b$. As we have fixed the $\su(2)$-triple $\sigma$, this action is not present on our stratum $\mathcal Q_{C,\sigma}$.  

\begin{remark} \label{comparison}
As we have just noted, the candidate $\mathcal Q$ for the universal hyperk\"ahler implosion which we have constructed using the Nahm equations does not coincide with the quiver version constructed in \cite{DKS} when $K=\SU(n)$. We can modify the construction slightly in order to obtain a candidate which, at least when $K=\SU(2)$, can be identified with the quiver version from \cite{DKS}. For this recall that 
 $\tilde{\mathcal A}_{C,\sigma}$ 
\index{$\tilde{\mathcal A}_{C,\sigma}$}  
was defined 
to be the space of quadruples of $C^1$ functions 
$$(T_0,T_1,T_2,T_3):[0,\infty) \to \mathfrak k\otimes\mathbb R^4$$ 
such that there exists $\tau_0\in Z(\cf)$ and a commuting triple $\tau = (\tau_1, \tau_2, \tau_3)$ in $ \mathfrak t$, such that $C(\tau) = C$,  satisfying
\begin{itemize}
\item $T_0^D-\tau_0 \in \Omega_\zeta(\cf)$
\item $T_i^D - \tau_i -\frac{\sigma_i}{2(t+1)} \in \Omega_\zeta(\cf) \quad i=1,2,3, $
\item $T_i^H\in\Omega_{\exp}(\cf^\perp) \quad i=0,1,2,3.$
\end{itemize}
where the $\SU(2)$ triple $\sigma$ is fixed. If instead we allow $\sigma$ to vary in a single $T$-orbit, essentially the same analysis goes through and we obtain a modifed version $\hat{\mathcal Q}$ of $\mathcal Q$ with a torus fibration
$\hat{\mathcal Q} \to \mathcal Q$ and similar properties (cf. Remark \ref{comparison2}).

\end{remark}

\section{Symplectic Implosion and the `Baby Nahm Equation'}

In this section we develop a gauge-theoretic description of symplectic implosion inspired by Bielawski's discussion of the so-called `Baby Nahm Equation' in \cite{Bielawski:2007}. Let $K$ be a compact Lie group as before with Lie algebra $\kf$ and fix a invariant inner product on $\kf$. Let $\mathcal I \subset \R$ be an interval.  For a pair of $C^1$ functions $(T_0,T_1): \mathcal I\to \kf$ the \emph{Baby Nahm equation} is given by 
\begin{equation}\label{BabyNahm}
\dot T_1 + [T_0,T_1] = 0.
\end{equation}

\subsection{$(T^*K)_{\rm impl}$ and the Baby Nahm equation on $[0,1]$}
Let $\mathcal I=[0,1]$ and consider the space $\mathcal A = (C^1([0,1],\kf))^2$. This is an infinite-dimensional flat K\"ahler manifold equipped with the $L^2$ metric and compatible complex structure which can be seen by writing a tangent vector $X\in T_T\mathcal A$ as $X_0+iX_1$ and taking $I$ to be multiplication by $i$. The gauge group $\mathcal G_{00} = \{u\in C^2([0,1],K) | u(0)=1=u(1)\}$ (see also \ref{G00}) acts on $\mathcal A$ as usual preserving the K\"ahler structure and the associated moment map is given by the baby Nahm equation, as is shown by the following calculation for $\xi \in \mathrm{Lie}(\mathcal G_{00})$ and $Y\in T_T\mathcal A$: 
\begin{eqnarray*}
\omega(X^\xi, Y)  &=& \int_0^1 \langle (I(-\dot\xi+[\xi,T_0], [\xi,T_1]), (Y_0,Y_1)\rangle \\
&=& \int_0^1 \langle \dot\xi-[\xi,T_0], Y_1\rangle + \langle [\xi,T_1], Y_0\rangle\\
&=&\langle \xi,Y_1\rangle|_0^1  - \int_0^1 \langle\xi, \dot Y_1 + [T_0,Y_1]+[Y_0,T_1]\rangle\\
&=& - \int_0^1 \langle\xi, \dot Y_1 + [T_0,Y_1]+[Y_0,T_1]\rangle
\end{eqnarray*}
as $\xi$ vanishes at the endpoints of the interval.

Thus, we can view the moduli space $\mathcal M$  of solutions to the baby Nahm equation modulo $\mathcal G_{00}$ as a K\"ahler quotient. Let $G\subset K$ be a subgroup. On $\mathcal M$ we have an action of $G$ induced from the gauge group $\mathcal G_{0,G}$ of gauge transformations equal to $1$ at $t=0$ and having an arbitrary value in $G$ at $t=1$. From the above calculation we can see 
that if $G=K$ then the moment map is given by evaluation at $t=1$:
 $$\mu_K(T) = T_1(1).$$
 Thus the symplectic implosion of $\mathcal M$ with respect to this $K$-action is given by a quotient
 $$\mathcal M_{\rm impl} = \mu_K^{-1}(\tf^+)/\cong,$$
 where $\tf^+$ is a (closed) positive Weyl chamber in $\tf$, the Lie algebra of the maximal torus. The implosion is stratified by the pre-images $\tf^+_C$
of the faces of $\tf^+$ under $\mu_K$ collapsed according to $\cong$
by the commutator of the associated centraliser subgroup. That is, we have strata
 $$\mathcal M_C = \mu_K^{-1}(\tf^+_C)/[C,C] = \{T\in \mathcal A \; | \; \dot T_1+[T_0,T_1] = 0, T_1(1)\in\tf^+_C\}/\mathcal G_{0,[C,C]}$$
 and 
 $$ \mathcal M_{\rm impl} = \coprod_C \mathcal M_C.$$
To identify the moduli spaces $\mathcal M$, respectively $\mathcal M_C$, we proceed as follows, following
Bielawski \cite{Bielawski:2007}. Given a solution $T$ to the Nahm equation, there exists a unique gauge transformation $u_0\in\mathcal G_{0,K}$ such that 
$$u_0. T_0 := u_0T_0u_0^{-1}-\dot u_0u_0^{-1} = 0.$$
As a consequence of the equation, we see that this forces $u_0.T_1 = u_0T_1u_0^{-1}$ to be constant. We can therefore define a map 
$$\Phi: \mathcal M\to K\times \kf, \qquad \Phi(T_0,T_1) = (u_0(1), T_1(0)).$$
We claim that this is a well-defined bijection. If $u\in\mathcal G_{00}$, then  $u_0u^{-1}$ gives the unique element in $\mathcal G_{0,K}$ that gauges $u.T_0$ to zero. Since $u(0)=u(1)=1$ we see that $\Phi(u.(T_0,T_1)) = \Phi(T_0,T_1)$, i.e. $\Phi$ is $\mathcal G_{00}$-invariant. 

To show injectivity, suppose $\Phi(T_0,T_1) =
\Phi(\tilde{T}_0,\tilde{T}_1)$. Then we have $T_1(1) = \tilde{T}_1(1)$. Let
$u_0$ and $\tilde{u}_0$ be the uniquely determined gauge transformations in
$\mathcal G_{0,K}$ that gauge $T_0$ and $\tilde{T}_0$ respectively to zero.
By assumption $u_0(1) = \tilde{u}_0(1)$. Therefore the gauge 
transformation $u = u_0^{-1} \tilde{u}_0$ lies
in $\mathcal G_{00}$. Also $u. \tilde{T}_{0} = u_0^{-1}.0  = T_0$.

This implies that $T_1 = u\tilde{T}_1u^{-1}$, since both $(T_0,T_1)$ and
$(T_0,u\tilde{T}_1u^{-1}) = u.(\tilde{T}_0,\tilde{T}_1)$ 
solve the baby Nahm equation and $T_1(1) =
u(1)\tilde{T}_1(1)u(1)^{-1} = \tilde{T}_1(1)$. Thus, by uniqueness of solutions of
linear ODEs $T_1\equiv u\tilde{T}_1u^{-1}$ on $[0,1]$, and hence $(T_0, T_1)$
is equivalent via $\mathcal G_{00}$ to $(\tilde{T}_0, \tilde{T}_1)$

The map $\Phi$ is also surjective, and an inverse is given as follows. For $(k,\xi)\in K\times\kf$ define a solution to the Baby Nahm equation by choosing a path $u_0\in \mathcal G_{0,K}$ such that $u_0(1) = k$ and define 
$$(T_0,T_1) = u_0^{-1}.(0,\xi).$$
Observe that any two choices of $u_0$ differ by an element of $\mathcal G_{00}$, so that this construction defines a unique element in $\mathcal M$. 

$\mathcal M$ is thus identified with $K \times \kf$, and hence with $T^*K$
on choosing an invariant inner product on $\kf$.

On $K\times \tf^+_C$ we have an action of $[C,C]$ via $c.(k,\xi) = (kc^{-1}, \xi)$ and a similar argument to the one just given shows that $\Phi$ gives a $[C,C]$ equivariant surjection
$$\Phi: \mu_K^{-1}(\tf^+_C) \to K\times \tf^+_C,$$
which, by the above discussion of injectivity, induces an isomorphism
$$\Phi_C: \mathcal M_C \to (K\times \tf^+_C)/[C,C].$$
Here the right hand side coincides with the stratum of $(T^*K)_{\rm impl}$ associated with $C$ as defined in \cite{GJS:2002}. 

\medskip
Hence we have a gauge-theoretic description of the universal symplectic implosion.

\subsubsection{The Symplectic Structure}
We start by calculating the symplectic structure of $\mathcal M$. Recall that we defined an inverse $\Psi$ to $\Phi:\mathcal M \to K\times \kf$ by $\Psi(k,\xi) = u_0^{-1}.(0,\xi)$, where $u_0$ was any $C^2$-path on $[0,1]$ linking $1$ and $k$. We calculate the derivative of $\Psi$ at $(1,0)$. This is given by 
\begin{eqnarray*}
D\Psi_{(1,0)}(\psi_1,\psi_2) &=& \frac{d}{d\theta}|_{\theta = 0}\Psi(\exp(\theta\psi_1), \theta\psi_2) \\
&=& \frac{d}{d\theta}|_{\theta = 0}\left(-\left(\frac{d}{dt}\exp(-t\theta\psi_1)\right)\exp(t\theta\psi_1), \mathrm{Ad}(\exp(-t\theta\psi_1))(\theta\psi_2)\right)\\
&=& (\psi_1,\psi_2)
\end{eqnarray*}
Therefore the pull-back of the symplectic form on $\mathcal M$ under $\Psi$ is given by 
\begin{eqnarray*}
(\Psi^*\omega_I)_{(1,0)}((\psi_1,\psi_2),(\xi_1,\xi_2)) &=& \int_0^1\langle ID\Psi_{(1,0)}(\psi_1,\psi_2), D\Psi_{(1,0)}(\xi_1,\xi_2) \rangle\\
&=& \int_0^1 \langle\psi_2,\xi_1\rangle - \langle \psi_1,\xi_2\rangle \\
&=& \langle\psi_2,\xi_1\rangle - \langle \psi_1,\xi_2\rangle,
\end{eqnarray*}
which is the standard symplectic form of $K\times \kf \cong T^*K$.
The symplectic form on the stratum $\mathcal M_C$ is induced from this and so coincides with the symplectic structure on the stratum as defined in \cite{GJS:2002}.

\subsubsection{The Complex Structure}
Given an element $(T_0,T_1)\in \mathcal A$ define $\alpha = T_0-iT_1$. Then the Baby Nahm equation takes the form 
$$\dot \alpha +\dot\alpha^* +[\alpha,\alpha^*] = 0.$$
Define the complex gauge group $\mathcal G_{00}^\C = \{v: [0,1]\to K_\C| v(0) = 1 = v(1)\}$. It acts on $\mathcal A$ by 
$$v.\alpha = v\alpha v^{-1} - \dot vv^{-1}.$$
We shall show that in fact $\mathcal M = \mathcal A/\mathcal G_{00}^\C$. First of all, in analogy to the construction above, we have 
$$\mathcal A/\mathcal G_{00}^\C \cong K_\C.$$
The isomorphism works again by sending any $\alpha\in\mathcal A$ to $v_0(1)$, where $v_0 \in \mathcal G_{0,K}^\C$ is the unique gauge transformation gauging $\alpha$ to zero such that $v_0(0) = 1$. Its inverse sends $g\in K_\C$ to $\alpha_g = -\dot v_gv_g^{-1}$, where $v_g$ is any element of $\mathcal G_{0,K}^\C$ such that $v_g(1) = g^{-1}$. We shall now associate to any given $g\in K_\C$ a path $v_g$ such that the resulting $\alpha_g$ will satisfy the Baby Nahm equation. 

This can be done as follows. Use polar decomposition and write $g^{-1} = k\exp(i\xi)$ for some $(k,\xi)\in K\times\kf$. Then observe that if we take $v_\xi(t) = \exp(it\xi)$, then this clearly links $1$ and $\exp(i\xi)$ and satisfies moreover 
$$-\dot v_\xi v_\xi^{-1} = i\xi,$$
which clearly solves the real equation as $i \xi$
is constant and  self-adjoint. Now take any $K$-valued path $u$ linking $1$ and $k$ and define $v_g(t) = u(t)\exp(it\xi)$. Then $\alpha_g = u.(i\xi)$ solves the Baby Nahm equation, as the Baby Nahm equation is invariant under arbitrary $K$-valued gauge transformations.

This argument shows that any $\mathcal G_{00}^\C$-orbit on $\mathcal A$ contains a solution of the Baby Nahm equation. If now $\alpha$ and $\alpha' = v.\alpha$ both solve the Baby Nahm equation, then the convexity argument in section 2 of \cite{Donaldson:1984} implies that $\alpha$ and $\alpha'$ must be equivalent by a $K$-valued gauge transformation. Thus, we see that, as a complex manifold, $\mathcal M = \mathcal A/\mathcal G_{00}^\mathbb C\cong K_\C$ and the isomorphism between $T^*K$ and $K_\C$ is given via polar decomposition.

To identify the complex structure of $\mathcal  M_C$, we take $\alpha$ to be a solution to the baby Nahm equation. We let $A_C = \exp(iZ(\mathfrak c))\subset T_\C$. Then we can define a map 
$$\Phi_C: \mathcal G_{0,[C,C]}^\C.\mu_K^{-1}(\tf^+_C)/\mathcal G_{0,[C,C]}^\C \to (K\times A_C)/[C,C]$$
by $$\Phi(\alpha) = \exp(i\tau_1).u_0(1).$$
Here $u_0$ denotes the unique compact gauge transformation that gauges away $T_0 = 1/2(\alpha-\alpha^*)$ such that $u_0(0) = 1$ and $i\tau_1 = 1/2(\alpha + \alpha^*)(0)$. In other words, we found a complex gauge transformation that gauges away $\alpha$ in stages, by first gauging away $T_0$ and then gauging away the constant remainder $i\tau_1$. The map $\Phi$ is $[C,C]$-equivariant and hence gives rise to a well-defined bijection on quotient spaces.

Moreover, we know from the discussion in $\S 6$ of \cite{GJS:2002} that the right-hand side $(K\times A_C)/[C,C]\cong K_\C/[P_C,P_C]$, where $P_C$ is the parabolic associated to $C$.

\subsection{$(T^*K)_{\rm impl}$ and the Baby Nahm Equation on the half-line}
In this section we shall describe the strata of the universal symplectic implosion $(T^*K)_{\rm impl}$ as moduli spaces of solution to the Baby Nahm equation on the half-line $[0,\infty)$. For those subgroups $C$ of $K$ which arise as centraliser of an element of the positive Weyl chamber,  we consider the gauge group $\tilde{\mathcal G}_{[C,C]}(b)$ defined as in $\S 4$ but with $\Omega_\zeta$ replaced by $\Omega_{\exp}$. It acts on the flat K\"ahler manifold $\mathcal A_{C,b}$, which consists of pairs of $\kf$-valued $C^1$-functions $(T_0,T_1)$ on $[0,\infty)$, which have limits $\tau_1\in(\tf_+)_C$, (i.e. in the face of the positive Weyl chamber such that $C(\tau_1) = C$) and $\tau_0 \in Z(\cf)$ such that $T_i-\tau_i\in\Omega_{\exp}$ for $i=0,1$. The complex structure is defined as in the previous section and the metric is given by the Bielawski bilinear form. The moment map calculations (see proposition \ref{propmomentmaps}) then go through without essential modification and the vanishing condition of the associated moment map gives the Baby Nahm equation
$$\dot T_1+[T_0,T_1] = 0.$$
We have explained before that the resulting moduli space is the same as the space of solutions with $\tau_0 =0$ modulo gauge transformations $u\in \tilde{\mathcal G}_{[C,C]}(b)$ that satisfy $s(u) = \lim_{t\to\infty} \dot uu^{-1} = 0$ analogous to the gauge group defined in $\S 3$. Note that such gauge transformations have a limit $u(\infty) = \tilde u(\infty) \in [C,C]$. To identify the K\"ahler quotient $\mathcal M = \mathcal A_{C,b} \symp \tilde{\mathcal G}_{[C,C]}(b)$, we proceed as in the previous section.

Let $(T_0,T_1)$ be a solution with $\tau_0=0$. Denote by $u$ the unique gauge transformation with $u(0)=1$ such that $T_0 = -\dot uu^{-1}$. Note that since $K$ is compact, $u$ is automatically bounded. Thus, we see that for $t'>t$
$$|u(t')-u(t)| \leq \int_t^{t'}|\dot u| \leq c\int_t^{t'} |T_0| < M e^{-\eta t}$$
for some constant $M>0$. Thus, $u(\infty)\in K$ exists and it follows furthermore that $u_0(t) = u(t)u(\infty)^{-1}$ satisfies the asymptotic conditions of the the gauge group $\mathcal G_{[C,C]}(b)$ (with $s(u_0) = 0$), but not $u_0(0) =1$. We have thus written $(T_0,T_1) = u_0.(0,T_1(\infty))$. Any other such gauge transformation differs from $u_0$ by right multiplication by a constant $c\in C$ and in order to satisfy the asymptotics of the gauge group, we must require $c\in [C,C]$. 

It therefore makes sense to consider the following map:
$$\Phi: \mathcal M_C \to (K\times (\tf_+)_C)/[C,C]\quad \Phi(T_0,T_1) = (u_0(0), T_1(\infty)) = (u_0(0), \tau_1),$$
where $[C,C]$ acts on $K\times (\tf_+)_C$ by $c.(k,\xi) = (kc,\xi)$ and $(\tf_+)_C\subset Z(\cf)$ is the open set of elements in the positive Weyl chamber with centraliser equal to $C$.
Similar arguments as the ones we used before when we studied the baby Nahm equations on $[0,1]$ show that $\Phi$ is a well-defined bijection of sets.

\subsubsection{Complex Structure}

We now want to identify the complex structure of $\mathcal M_C$ at a point of the form $(0,\tau_1)$ (any $K$-orbit on $\mathcal M_C$ contains a point of this form). The tangent space of $\mathcal M_C$ at $(0,\tau_1)$ is given by the solution space of the ODE-system
$$\dot X_0 + [\tau_1,X_1] = 0 \qquad \dot X_1 + [X_0,\tau_1] = 0.$$
We see immediately that this forces $X_i^D$ to be constant, thus equal to an element of $Z(\cf)$, as we need $X_i(\infty)=\delta_i$ to lie in the centre of $\cf$. It is useful to decompose $\cf^\perp\subset \kf$ into two-dimensional root spaces $\cf^\perp = \oplus_\alpha \kf_\alpha$ with respect to the adjoint action of $Z(C)$. On $\kf_\alpha$ we have the relation $[\tau_1,\xi_\alpha] = 2\pi\alpha(\tau) I_0\xi_\alpha$, where $I_0$ denotes the natural complex structure on $\cf^\perp\cong T_{\tau_1}(K.\tau_1)$, the tangent space to the (co)adjoint orbit of $\tau_1$ (see $\S 8.B$ in \cite{Besse:1987}).

Note that since any solution of the Baby Nahm equations in $\mathcal A_C$ is of the form $u.(0,\tau)$ for some $\tau\in\tf^+_C$, where $u$ satisfies the conditions of the gauge group at infinity and has an arbitrary value at $t=0$, we can assume that tangent vectors $(X_0,X_1)$ at $(0,\tau_1)$ are given by combinations of infinitesimal gauge transformations and variations of the limit $\tau_1$, i.e. are of the form 
$$X_0 = -\dot\xi(t)\qquad X_1 = [\xi,\tau_1] + \delta_1,$$
for some $\delta_1\in Z(\cf)$ and some $\xi: [0,\infty)\to\kf$ satisfying all the conditions for being a member of the Lie algebra of the gauge group except that we allow arbitrary $\xi(0)$. These asymptotics are necessary to ensure that the $X_i-\delta_i$ have the right decay behaviour. We write $X^{\xi,\delta_1}$ for such a tangent vector. The differential equations satisfied by $(X_0,X_1)$ hence translate into a second order ODE on $\xi$, namely
\begin{equation}\label{tangentspace}
\ddot \xi + [\tau_1,[\tau_1,\xi]] = 0,
\end{equation}
since $[\tau_1,\delta_1] = 0$. The ODE on $X_1$ is automatically satisfied, as this is just the linearised Baby Nahm equation and the Baby Nahm equation is invariant under arbitrary gauge transformations. From the above discussion we get that $\dot\xi^D = \dot\xi(\infty)\in Z(\cf)$, which implies $\xi^D = (t-b)\dot\xi(\infty) + \tilde\xi$ for some constant $\tilde\xi\in [\cf,\cf]$ as we want to satisfy the condition of the gauge group.  

To determine the other components, decompose $\xi = \sum_\alpha \xi_\alpha$ according to the above root space decomposition. Then we have on each space $\kf_\alpha$
$$\ddot \xi_\alpha - 4\pi^2\alpha(\tau_1)^2\xi_\alpha = 0.$$
The minus sign comes from the relation $[\tau_1,\xi_\alpha] = 2\pi\alpha(\tau) I_0\xi_\alpha$ on $\kf_\alpha$. The solution to this equation which decays exponentially is given by 
$$\xi_\alpha(t) = \xi_\alpha(0)\exp(-2\pi\alpha(\tau_1)t),$$
where we choose from $\pm\alpha$ the one such that $\alpha(\tau_1)>0$ (see the discussion in \cite{Besse:1987}, $\S 8.B$). 
Now $D\Phi$ sends such a tanget vector $X^{\xi,\delta_1}$ to $(\xi(0), \delta_1)\in T_{1,\tau_1}(K/[C,C]\times (\tf_+)_C) \cong (\kf/[\cf,\cf]) \oplus Z(\cf) \cong (Z(\cf)\oplus \cf^\perp) \oplus Z(\cf)$. This in fact equals 
$$D\Phi(X^{\xi,\delta_1}) = (-b\dot\xi(\infty) + \sum_{\alpha;\alpha(\tau_1)>0} \xi_\alpha(0), \delta_1) \in (Z(\cf)\oplus \cf^\perp) \oplus Z(\cf)$$
If we apply the complex structure, we get by a direct calculation 
$$D\Phi(IX^{\xi,\delta_1}) = (-b\delta_1 + \sum_{\alpha;\alpha(\tau_1)>0} I_0\xi_\alpha(0), -\dot\xi(\infty)),$$
i.e. we arrive at the following proposition.

\begin{proposition}
The complex structure $I$ on $(K\times (\tf_+)_C)/[C,C]$ induced by the complex structure $I$ on $\mathcal M_C$ is given at a point $(1,\tau_1)$ by 
$$I(v+v^\perp, w) = (-bw+ I_0v^\perp, \frac{1}{b}v),$$
where $v,w\in Z(\cf), v^\perp\in \cf^\perp$, so that $v+v^\perp \in T_1(K/[C,C])$ and $w\in T_{\tau_1}(\tf_+)_C$ and $I_0$ denotes the canonical complex structure on the orbit through $\tau_1$ (see $\S 8.B$ in \cite{Besse:1987}).
\end{proposition}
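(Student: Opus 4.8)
The plan is to compute the action of $I$ at the distinguished point $(0,\tau_1)\in\mathcal M_C$, which maps under $\Phi$ to $(1,\tau_1)$, and to read everything off from the parametrisation of tangent vectors already established. Since every $K$-orbit in $\mathcal M_C$ contains a point of the form $(0,\tau_1)$, computing $I$ there suffices. Recall that a tangent vector at $(0,\tau_1)$ is of the form $X^{\xi,\delta_1}=(-\dot\xi,\,[\xi,\tau_1]+\delta_1)$, and that $D\Phi(X^{\xi,\delta_1})=(-b\dot\xi(\infty)+\sum_{\alpha(\tau_1)>0}\xi_\alpha(0),\ \delta_1)$, where the first entry is read modulo $[\cf,\cf]$ in $T_1(K/[C,C])\cong Z(\cf)\oplus\cf^\perp$.

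First I would compute $IX^{\xi,\delta_1}$. Because $\mathcal M_C$ is a K\"ahler submanifold of the flat space $\mathcal A_{C,b}$ on which $I$ acts by $X=X_0+iX_1\mapsto iX$, i.e. $(X_0,X_1)\mapsto(-X_1,X_0)$, we obtain $IX^{\xi,\delta_1}=(-[\xi,\tau_1]-\delta_1,\,-\dot\xi)$. The task is then to rewrite this in the standard form $X^{\xi',\delta_1'}$ and apply $D\Phi$. Taking the limit of the second component gives $\delta_1'=-\dot\xi(\infty)$ at once.

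Next I would solve for $\xi'$ from $\dot\xi'=[\xi,\tau_1]+\delta_1$, splitting into $D$- and $H$-parts and using $\tau_1\in Z(\cf)$ so that $[\xi^D,\tau_1]=0$. The $D$-part reads $\dot\xi'^D=\delta_1\in Z(\cf)$, whence $\xi'^D=(t-b)\delta_1+\tilde\xi'$ with $\tilde\xi'\in[\cf,\cf]$; evaluated at $t=0$ this contributes $-b\delta_1$ modulo $[\cf,\cf]$. For the $H$-part I would use the root-space decomposition $\cf^\perp=\oplus_\alpha\kf_\alpha$ and the relation $[\tau_1,\xi_\alpha]=2\pi\alpha(\tau_1)I_0\xi_\alpha$: solving $\dot\xi'_\alpha=[\xi_\alpha,\tau_1]=-2\pi\alpha(\tau_1)I_0\xi_\alpha$ among the exponentially decaying solutions $\xi'_\alpha(t)=\xi'_\alpha(0)e^{-2\pi\alpha(\tau_1)t}$ forces $\xi'_\alpha(0)=I_0\xi_\alpha(0)$. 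Assembling, $D\Phi(IX^{\xi,\delta_1})=(-b\delta_1+\sum_{\alpha(\tau_1)>0}I_0\xi_\alpha(0),\,-\dot\xi(\infty))$.

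Finally I would translate into the coordinates of the proposition. Writing $D\Phi(X^{\xi,\delta_1})=(v+v^\perp,w)$ with $v=-b\dot\xi(\infty)\in Z(\cf)$, $v^\perp=\sum_\alpha\xi_\alpha(0)\in\cf^\perp$ and $w=\delta_1$, the previous line becomes $I(v+v^\perp,w)=(-bw+I_0v^\perp,\tfrac1b v)$, using $-\dot\xi(\infty)=v/b$ and $\sum_\alpha I_0\xi_\alpha(0)=I_0v^\perp$; this is the asserted formula. The main obstacle is the $H$-component step: one must verify that $IX^{\xi,\delta_1}$ is again a genuine tangent vector, so that it solves the linearised baby Nahm system with the correct decay, and then match the exponentially decaying solution in order to pin down $\xi'_\alpha(0)=I_0\xi_\alpha(0)$. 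The factor $1/b$ in the second slot is a bookkeeping consequence of the identification $v=-b\dot\xi(\infty)$.
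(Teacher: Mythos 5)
Your proposal is correct and follows essentially the same route as the paper, which simply asserts the formula $D\Phi(IX^{\xi,\delta_1}) = (-b\delta_1 + \sum_{\alpha}I_0\xi_\alpha(0), -\dot\xi(\infty))$ as a ``direct calculation''; you have carried out that calculation, correctly identifying $\delta_1' = -\dot\xi(\infty)$, extracting $\xi'^D(0)\equiv -b\delta_1 \bmod [\cf,\cf]$ from the gauge-algebra normalisation, and matching the exponentially decaying root-space solutions to get $\xi'_\alpha(0)=I_0\xi_\alpha(0)$. Your convention $I(X_0,X_1)=(-X_1,X_0)$ (multiplication by $i$) is the one consistent with the paper's definition and with the signs in the stated proposition, so no issues arise.
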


\subsubsection{The metric on $\mathcal M_C$}
From the above discussion we know that by decomposing $\kf$ into the root spaces of $\tau_1$, we can write the general solution to the equation (\ref{tangentspace}) explicitly as 
$$\xi(t) = (t-b)\dot\xi(\infty) + \sum_{\alpha;\alpha(\tau_1)>0} \xi_\alpha(t),$$
where $\xi_\alpha(t) = \xi_\alpha(0)\exp(-2\pi\alpha(\tau_1)t)$. And this corresponds to the tangent vector $X^{\xi,\delta_1}= (X_0,X_1) = (-\dot\xi, [\xi,\tau_1]+\delta_1)$ where $\delta_1\in Z(\cf)$.  We can then calculate $\Vert X^{\xi,\delta_1}\Vert^2_b$ using the relation $[\tau_1,\delta_1]=0$: 
\begin{eqnarray*}
\Vert X^{\xi,\delta_1}\Vert^2_b &=& b(|\dot\xi(\infty)|^2 + |\delta_1|^2) + \int_0^\infty |\dot\xi|^2 + |[\tau_1,\xi] + \delta_1|^2 - (|\dot\xi(\infty)|^2 + |\delta_1|^2) dt\\
&=& b(|\dot\xi(\infty)|^2 + |\delta_1|^2) + \int_0^\infty (\frac{d}{dt}\langle\dot\xi,\xi\rangle) - \langle\ddot\xi,\xi\rangle+ |[\tau_1,\xi]|^2 + 2\langle[\tau_1,\xi],\delta_1\rangle - |\dot\xi(\infty)|^2 dt\\
&=& b(|\dot\xi(\infty)|^2 + |\delta_1|^2) + \int_0^\infty (\frac{d}{dt}\langle\dot\xi,\xi\rangle) + \langle[\tau_1,[\tau_1,\xi]],\xi\rangle+ |[\tau_1,\xi]|^2 - |\dot\xi(\infty)|^2 dt\\
&=& b(|\dot\xi(\infty)|^2 + |\delta_1|^2) + \int_0^\infty (\frac{d}{dt}\langle\dot\xi,\xi\rangle) - |\dot\xi(\infty)|^2 dt\\
&=& b(|\dot\xi(\infty)|^2 + |\delta_1|^2) + [\langle\dot\xi,\xi\rangle - t|\dot\xi(\infty)|^2]_0^\infty.
\end{eqnarray*}
We evaluate the expression in brackets by writing $\xi = (t-b)\dot\xi(\infty) + \tilde\xi$, so that $\dot\xi = \dot\xi(\infty) + \dot{\tilde\xi}$. Thus, (see the calculation in the proof of Proposition \ref{propmomentmaps})
\begin{eqnarray*}
[\langle\dot\xi,\xi\rangle - t|\dot\xi(\infty)|^2]_0^\infty &=& -b|\dot\xi(\infty)|^2 - \langle\dot\xi(0),\xi(0)\rangle.
\end{eqnarray*}
We arrive at
$$\Vert X^{\xi,\delta_1}\Vert^2_{B,b} = b|\delta_1|^2 - \langle\dot\xi(0),\xi(0)\rangle.$$
Using the explicit form of $\xi(t)$, we can rewrite this as 
\begin{eqnarray*}
\Vert X^{\xi,\delta_1}\Vert^2_{B,b} &=& b|\delta_1|^2 - \langle\dot\xi(0),\xi(0)\rangle\\
&=& b|\delta_1|^2 - \langle \dot\xi(\infty) + \sum_{\alpha;\alpha(\tau_1)>0}\dot\xi_\alpha(0), -b\dot\xi(\infty) + \sum_{\alpha; \alpha(\tau_1)>0}\xi_\alpha(0)\rangle\\
&=& b(|\delta_1|^2 + |\dot\xi(\infty)|^2) - \sum_{\alpha;\alpha(\tau_1)>0} \langle \dot\xi_\alpha(0), \xi_\alpha(0)\rangle\\
&=& b(|\delta_1|^2 + |\dot\xi(\infty)|^2) + \sum_{\alpha;\alpha(\tau_1)>0} 2\pi\alpha(\tau_1)|\xi_\alpha(0)|^2,
\end{eqnarray*}
which is hence \emph{positive definite}, since $\alpha(\tau_1)>0$. In the calculation of the tangent space at a model solution we have used the fact that any solution to the Baby Nahm equation is gauge equivalent to a model solution by the $K$-action given by gauge transformations with arbitrary values at $t=0$. This $K$-action preserves the Bielawski metric on the moduli space. Hence we see that the metric must be positive definite \emph{everywhere}.

\begin{proposition}
The Bielawski metric $\Vert\cdot\Vert_{B,b}$ is positive definite on the moduli space $\mathcal M_C$ whenever $b>0$. At a model solution it is given by the formula
$$\Vert X^{\xi,\delta_1}\Vert^2_{B,b} = b(|\delta_1|^2 + |\dot\xi(\infty)|^2) + \sum_{\alpha;\alpha(\tau_1)>0} 2\pi\alpha(\tau_1)|\xi_\alpha(0)|^2.$$
Using the formula for $D\Phi$ given in the previous section, we can write this on $T_{(1,\tau_1)}(K\times (\tf_+)_C)/[C,C]$ as 
$$|(v+v^\perp,w)|^2 = b|w|^2 + b^{-1}|v|^2 + \sum_{\alpha;\alpha(\tau_1)>0} 2\pi\alpha(\tau_1)|v_\alpha^\perp|^2,$$
where $v,w\in Z(\cf)$ and $v^\perp\in \cf^\perp$. 
\end{proposition}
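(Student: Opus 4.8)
The plan is to prove the model-solution formula first and then bootstrap both the positive-definiteness and the transported expression from it. At a model solution $(0,\tau_1)$ the tangent vectors are the $X^{\xi,\delta_1} = (-\dot\xi,\, [\xi,\tau_1]+\delta_1)$ with $\delta_1\in Z(\cf)$ and $\xi$ a solution of the linearised equation (\ref{tangentspace}), $\ddot\xi + [\tau_1,[\tau_1,\xi]] = 0$, subject to the gauge asymptotics recorded above. I would compute $||X^{\xi,\delta_1}||_b^2$ directly from the definition of the Bielawski form, integrating the $||\dot\xi||^2$ contribution by parts. Two structural cancellations drive the calculation: skew-symmetry of $\mathrm{ad}(\tau_1)$ gives $\langle[\tau_1,[\tau_1,\xi]],\xi\rangle = -||[\tau_1,\xi]||^2$, which annihilates the potential term against $||[\tau_1,\xi]||^2$; and $\langle[\tau_1,\xi],\delta_1\rangle = -\langle\xi,[\tau_1,\delta_1]\rangle = 0$ since $\delta_1\in Z(\cf)$ commutes with $\tau_1\in\cf$, which removes the cross term coming from expanding $||[\tau_1,\xi]+\delta_1||^2$.

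After these cancellations the integrand is a pure total derivative, so only boundary terms survive, and I would evaluate $[\langle\dot\xi,\xi\rangle - t||\dot\xi(\infty)||^2]_0^\infty$ using the decomposition $\xi = (t-b)\dot\xi(\infty) + \tilde\xi + \sum_\alpha \xi_\alpha$, with $\tilde\xi\in[\cf,\cf]$ constant and each $\xi_\alpha\in\kf_\alpha$ decaying exponentially. One may take the representative $\tilde\xi = 0$, since a constant $[\cf,\cf]$-valued $\xi$ gives the zero tangent vector (these are precisely the $[C,C]$-gauge directions already quotiented out). The limit at infinity contributes $-b||\dot\xi(\infty)||^2$, which cancels the $b||\dot\xi(\infty)||^2$ coming from the $X_i(\infty)$-part of the norm, while the endpoint $t=0$ yields $-\langle\dot\xi(0),\xi(0)\rangle$. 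Substituting the explicit solution $\xi_\alpha(t) = \xi_\alpha(0)e^{-2\pi\alpha(\tau_1)t}$, so that $\dot\xi_\alpha(0) = -2\pi\alpha(\tau_1)\xi_\alpha(0)$, and using orthogonality of $Z(\cf)$, $[\cf,\cf]$ and the distinct root spaces $\kf_\alpha$, collapses $-\langle\dot\xi(0),\xi(0)\rangle$ to $b||\dot\xi(\infty)||^2 + \sum_{\alpha;\alpha(\tau_1)>0} 2\pi\alpha(\tau_1)||\xi_\alpha(0)||^2$, giving the asserted model formula.

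Positive definiteness at a model solution is then immediate: with $b>0$ and each root chosen so that $\alpha(\tau_1)>0$, every term is non-negative, and the norm vanishes only when $\delta_1=0$, $\dot\xi(\infty)=0$ and all $\xi_\alpha(0)=0$; but these conditions force $\xi_\alpha\equiv 0$ and $\dot\xi\equiv 0$, hence $X^{\xi,\delta_1}=0$. To propagate this to an arbitrary point of $\mathcal M_C$, I would invoke the fact established earlier that every solution of the Baby Nahm equation in $\mathcal A_C$ is gauge-equivalent to a model solution by a $K$-valued gauge transformation (one with arbitrary value at $t=0$), together with the fact that this $K$-action preserves $||\cdot||_b$; positive definiteness therefore holds everywhere.

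Finally I would transport the formula through $D\Phi$. Using the expression recorded in the previous subsection, $D\Phi(X^{\xi,\delta_1}) = (-b\dot\xi(\infty) + \sum_\alpha\xi_\alpha(0),\, \delta_1) =: (v+v^\perp,\, w)$ with $v=-b\dot\xi(\infty)\in Z(\cf)$, $v^\perp=\sum_\alpha\xi_\alpha(0)\in\cf^\perp$ and $w=\delta_1$. Then $b||\dot\xi(\infty)||^2 = b^{-1}||v||^2$, $b||\delta_1||^2 = b||w||^2$ and $||\xi_\alpha(0)||^2 = ||v_\alpha^\perp||^2$, so direct substitution gives $||(v+v^\perp,w)||^2 = b||w||^2 + b^{-1}||v||^2 + \sum_{\alpha;\alpha(\tau_1)>0}2\pi\alpha(\tau_1)||v_\alpha^\perp||^2$, as claimed. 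The only genuine care is needed in the boundary-term bookkeeping of the second step; in particular one must confirm that the representative choice $\tilde\xi=0$ is legitimate and that the vanishing locus of the quadratic form is exactly the zero tangent vector, so that positive semidefiniteness upgrades to positive definiteness. The remainder is bilinear algebra governed by the orthogonal splitting $\kf = Z(\cf)\oplus[\cf,\cf]\oplus\cf^\perp$.
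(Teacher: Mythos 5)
Your proposal is correct and follows essentially the same route as the paper: integration by parts with the two cancellations $\langle[\tau_1,[\tau_1,\xi]],\xi\rangle=-\|[\tau_1,\xi]\|^2$ and $[\tau_1,\delta_1]=0$, evaluation of the boundary term $[\langle\dot\xi,\xi\rangle-t\|\dot\xi(\infty)\|^2]_0^\infty$, substitution of the explicit root-space solutions, and propagation of positive definiteness by the metric-preserving $K$-action. Your explicit justification for discarding the constant $[\cf,\cf]$-valued part of $\xi$ and your check that the vanishing locus of the quadratic form is exactly zero are points the paper leaves implicit, but the argument is the same.
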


\subsubsection{Symplectic Structure}
We can now read off the symplectic form on $T_{(1,\tau_1)}(K\times (\tf_+)_C)/[C,C]$ from the above two propositions. 
\begin{eqnarray*}
\omega_b((v_1+v_1^\perp, w_1),(v_2+v_2^\perp, w_2)) &=& \langle I(v_1+v_1^\perp, w_1),(v_2+v_2^\perp, w_2)\rangle_b\\
&=& \langle (-bw_1+I_0v_1^\perp, b^{-1}v_1),(v_2+v_2^\perp, w_2)\rangle_b\\
&=& b\langle b^{-1}v_1,w_2\rangle + b^{-1}\langle-bw_1,v_2\rangle + \sum_{\alpha;\alpha(\tau_1)>0}2\pi\alpha(\tau_1)\langle I_0(v_1^\perp)_\alpha,(v_2^\perp)_\alpha\rangle\\
&=& \langle v_1,w_2\rangle - \langle w_1,v_2\rangle + \sum_{\alpha;\alpha(\tau_1)>0} \langle [\tau_1,(v_1^\perp)_\alpha],(v_2^\perp)_\alpha\rangle\\
&=&  \langle v_1,w_2\rangle - \langle w_1,v_2\rangle + \langle [\tau_1,v_1^\perp],v_2^\perp\rangle\\
&=&   \langle v_1,w_2\rangle - \langle w_1,v_2\rangle + \langle \tau_1,[v_1^\perp,v_2^\perp]\rangle,
\end{eqnarray*}
where we used the defining properties of the canonical complex structure $I_0$ as given in \cite{Besse:1987}, $\S 8.B$. We state the result of this calculation in the following proposition
\begin{proposition}
The induced symplectic form on $(K\times (\tf_+)_C)/[C,C]$ at the point $(1,\tau)$ is given by 
$$\omega_b((v_1+v_1^\perp, w_1),(v_2+v_2^\perp, w_2)) = \langle v_1,w_2\rangle - \langle w_1,v_2\rangle + \omega^{KKS}_{\tau_1}(v_1^\perp,v_2^\perp),$$
where $\omega^{KKS}$ denotes the standard symplectic form on the coadjoint orbit $\mathcal O_{\tau_1}$. In particular, $\omega$ is independent of the parameter $b$ in the definition of the Bielawski metric.
\end{proposition}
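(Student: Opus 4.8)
The plan is to combine the explicit formulae for the complex structure $I$ and the Bielawski metric $\|\cdot\|_b$ on $T_{(1,\tau_1)}(K\times(\tf_+)_C)/[C,C]$ established in the two preceding propositions, and simply to read off the symplectic form from the compatibility relation $\omega_b(A,B)=\langle IA,B\rangle_b$. In other words, the content is exactly the bookkeeping carried out in the displayed computation immediately preceding the statement, and my task is to organise it.

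First I would polarise the metric formula to obtain the bilinear form $\langle(v_1+v_1^\perp,w_1),(v_2+v_2^\perp,w_2)\rangle_b = b\langle w_1,w_2\rangle + b^{-1}\langle v_1,v_2\rangle + \sum_{\alpha;\alpha(\tau_1)>0}2\pi\alpha(\tau_1)\langle(v_1^\perp)_\alpha,(v_2^\perp)_\alpha\rangle$, using that the three pieces (the base $Z(\cf)$, the fibre $Z(\cf)$, and the root spaces $\kf_\alpha\subset\cf^\perp$) are mutually orthogonal. Next I would apply $I$ to the first argument via $I(v_1+v_1^\perp,w_1)=(-bw_1+I_0v_1^\perp,\tfrac1b v_1)$ and substitute. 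The factors of $b$ then cancel in pairs: the weight $b$ on the fibre slot meets $\tfrac1b v_1$ to give $\langle v_1,w_2\rangle$, while the weight $b^{-1}$ on the base slot meets $-bw_1$ to give $-\langle w_1,v_2\rangle$. This already yields the two \emph{cotangent} terms and exhibits their independence of $b$.

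The one step requiring a little care is the identification of the remaining root-space contribution with the Kirillov--Kostant--Souriau form. Here I would invoke the relation $[\tau_1,\xi_\alpha]=2\pi\alpha(\tau_1)I_0\xi_\alpha$ on each $\kf_\alpha$ recalled from \cite{Besse:1987}, so that $2\pi\alpha(\tau_1)\langle I_0(v_1^\perp)_\alpha,(v_2^\perp)_\alpha\rangle = \langle[\tau_1,(v_1^\perp)_\alpha],(v_2^\perp)_\alpha\rangle$. Summing over the positive roots and using bi-invariance of $\langle,\rangle$ turns this into $\langle[\tau_1,v_1^\perp],v_2^\perp\rangle = \langle\tau_1,[v_1^\perp,v_2^\perp]\rangle$, which is precisely $\omega^{KKS}_{\tau_1}(v_1^\perp,v_2^\perp)$ under the identification $\cf^\perp\cong T_{\tau_1}\mathcal O_{\tau_1}$.

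I do not anticipate a genuine obstacle. The only point to watch is that the root-space decomposition is taken with respect to $\tau_1$ and that $I_0$ preserves each $\kf_\alpha$, so the cross terms between distinct root spaces vanish and the sum reassembles cleanly into the single bracket $\langle\tau_1,[v_1^\perp,v_2^\perp]\rangle$; the claimed $b$-independence of $\omega_b$ is then immediate from the cancellations above.
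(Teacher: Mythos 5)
Your proposal is correct and follows essentially the same route as the paper: the paper likewise computes $\omega_b(A,B)=\langle IA,B\rangle_b$ by substituting the formula $I(v+v^\perp,w)=(-bw+I_0v^\perp,\tfrac1b v)$ into the polarised Bielawski metric, cancels the factors of $b$ on the two $Z(\cf)$ slots, and converts the root-space sum into $\langle\tau_1,[v_1^\perp,v_2^\perp]\rangle$ via $[\tau_1,\xi_\alpha]=2\pi\alpha(\tau_1)I_0\xi_\alpha$ and invariance of the inner product. No gaps.
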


\printindex

\bibliographystyle{plain}

\end{document}